\newtheorem{thm}{Theorem}[section]
\newtheorem{lemma}[thm]{Lemma}
\newtheorem{pro}[thm]{Proposition}
\newtheorem{cor}[thm]{Corollary}
\theoremstyle{definition}
\newtheorem{defn}[thm]{Definition}
\newtheorem{hyp}[thm]{Hypothesis}
\theoremstyle{remark}
\newtheorem{eg}[thm]{Example}
\numberwithin{equation}{section}
\newcommand{\ol}[1]{\overline{#1}}
\renewcommand{\hat}[1]{\widehat{#1}}
\renewcommand{\tilde}[1]{\widetilde{#1}}
\newcommand{\set}[1]{{\left\{#1\right\}}}
\newcommand{\pa}[1]{{\left(#1\right)}}
\newcommand{\sq}[1]{{\left[#1\right]}}
\newcommand{\gen}[1]{{\left\langle #1\right\rangle}}
\newcommand{\abs}[1]{{\left|#1\right|}}
\newcommand{\norm}[1]{{\left\|#1\right\|}}
\newcommand{\ssm}{\smallsetminus}
\newcommand{\ra}{\rightarrow}
\newcommand{\longra}{\longrightarrow}
\newcommand{\xra}{\xrightarrow}
\newcommand{\N}{\mathbb{N}}
\newcommand{\R}{\mathbb{R}}
\newcommand{\eqsys}[1]{{\left\{\begin{array}{ll}#1\end{array}\right.}}
\newcommand{\elle}{\operatorname{L}}
\newcommand{\tc}{\, \middle |\,}                                                
\newcommand{\con}{\operatorname{\mathscr{C}}}
\newcommand{\eps}{\varepsilon}
\newcommand{\T}{{\operatorname{\mathcal{T}}}}
\newcommand{\fcon}{\operatorname{\mathscr{FC}}}
\DeclareMathOperator{\borel}{{\operatorname{Borel}}}
\DeclareMathOperator{\lip}{\operatorname{Lip}}
\DeclareMathOperator{\diver}{\operatorname{div}}
\DeclareMathOperator{\sign}{\operatorname{sign}}
\DeclareMathOperator{\trace}{\operatorname{Tr}}
\begin{document}

\frenchspacing

\title[Sobolev spaces with respect to a weighted Gaussian measures]{Sobolev spaces with respect to a weighted Gaussian measures in infinite dimensions}

\author[S. Ferrari]{{S. Ferrari}}

\address[S. Ferrari]{Dipartimento di Matematica e Fisica ``Ennio de Giorgi'', Universit\`a del Salento, Via per Arnesano snc, 73100 Lecce, Italy.}
\email{\textcolor[rgb]{0.00,0.00,0.84}{simone.ferrari@unisalento.it}}


\subjclass[2010]{28C20, 46G12}

\keywords{Infinite dimensional analysis, Traces, Weighted Gaussian measure, divergence operator, Weighted Sobolev spaces, Sublevel sets.}

\date{\today}

\begin{abstract}
Let $X$ be a separable Banach space endowed with a non-degenerate centered Gaussian measure $\mu$ and let $w$ be a positive function on $X$ such that $w\in W^{1,s}(X,\mu)$ and $\log w\in W^{1,t}(X,\mu)$ for some $s>1$ and $t>s'$. In the present paper we introduce and study Sobolev spaces with respect to the weighted Gaussian measure $\nu:=w\mu$. We obtain results regarding the divergence operator (i.e. the adjoint in $\elle^2$ of the gradient operator along the Cameron--Martin space) and the trace of Sobolev functions on hypersurfaces $\set{x\in X\tc G(x) = 0}$, where $G$ is a suitable version of a Sobolev function.
\end{abstract} 

\maketitle

\section{Introduction}

Let $X$ be a separable Banach space with norm $\norm{\cdot}_X$, endowed with a non-degenerate centered Gaussian measure $\mu$. The associated Cameron--Martin space is denoted by $H$, its inner product by $\gen{\cdot,\cdot}_H$ and its norm by $\abs{\cdot}_H$. The spaces $W^{1,p}(X,\mu)$ and $W^{2,p}(X,\mu)$ for $p\geq 1$ are the classical Sobolev spaces of the Malliavin calculus (see \cite{Bog98}). The following hypothesis will be assumed throughout the paper:
\begin{hyp}\label{ipotesi1}
We will denote the weight function by $w$ and assume that $w(x)>0$, $\mu$-a.e. and 
\begin{enumerate}
\item $w\in W^{1,s}(X,\mu)$ for some $s> 1$ and it is a $(1,s)$-precise version (see section \ref{Notations and preliminaries});

\item $\log w\in W^{1,t}(X,\mu)$ for some $t>s'$, where $s'$ denotes the conjugate exponent of $s$ (this technical hypothesis will be explained later).
\end{enumerate}
We set
\[\nu:=w\mu.\]
\end{hyp}
We recall that $\nu$ is a Radon measure absolutely continuous with respect to $\mu$. The aim of this paper is to study the Sobolev spaces with respect to the measure $\nu$. In particular we study the divergence operator, i.e. the adjoint in $\elle^2$ of the gradient operator, and the traces on hypersurfaces $\set{x\in X\tc G(x) = 0}$ where $G:X\ra \R$ is a suitable version of a Sobolev function. The main results about Sobolev spaces with respect to the Gaussian measure can be found in \cite{Bog98}, while results about surface measures and traces of Sobolev functions in Gaussian Sobolev spaces can be found in \cite{FP91,Fey01} and in \cite{CL14}, respectively. 
We will assume the following hypotheses about the ``regularity'' of the hypersurfaces we work with:

\begin{hyp}\label{ipotesi2}
Let $G\in W^{2,q}(X,\mu)$ be a $(2,q)$-precise version (see Section \ref{Notations and preliminaries}) for every $q>1$ and assume
\begin{enumerate}
\item $\mu(G^{-1}(-\infty,0))>0$;

\item \label{Bastaaaaa} there exists $\delta>0$ such that $\abs{\nabla_H G}_H^{-1}\in\elle^q(G^{-1}(-\delta,\delta),\mu)$ for every $q>1$.
\end{enumerate}
\end{hyp}
Hypothesis \ref{ipotesi2}\eqref{Bastaaaaa} is classical, indeed see \cite{AM88}, \cite{Bog98} and \cite{CL14}, but it is difficult to prove. In this paper we are able to check Hypothesis \ref{ipotesi2}\eqref{Bastaaaaa} only for some notable functions $G$ (see Section \ref{Examples}).

One of our main results is an infinite dimensional weighted version of the divergence theorem, namely:

\begin{thm}\label{divergence theorem with traces}
Let $p\geq\frac{t}{t-s'}$ and $\{e_k\,|\, k\in\N\}$ be an orthonormal basis of $H$. For every $\varphi\in W^{1,p}(G^{-1}(-\infty,0),\nu)$ and $k\in\N$ we have
\[\int_{G^{-1}(-\infty,0)}\pa{\partial_k\varphi+\varphi\partial_k\log w-\varphi\hat{e}_k}d\nu=\int_{G^{-1}(0)}\trace_{G^{-1}(0)}(\varphi \partial_k G)\frac{w}{\abs{\nabla_H G}_H}d\rho.\]
Furthermore if $\Phi\in W^{1,p}(G^{-1}(-\infty,0),\nu; H)$ then
\[\int_{G^{-1}(-\infty,0)}\diver_\nu\Phi d\nu=\int_{G^{-1}(0)}\gen{\trace_{G^{-1}(0)}\Phi,\trace_{G^{-1}(0)}\nabla_H G}_H\frac{w}{\abs{\nabla_H G}_H}d\rho,\]
where $\trace_{G^{-1}(0)}\Psi=\sum_{n=1}^{+\infty}(\trace_{G^{-1}(0)}\psi_n)e_n$ if $\Psi\in W^{1,p}(G^{-1}(-\infty,0),\nu;H)$ and $\psi_n=\gen{\Psi,e_n}$.
\end{thm}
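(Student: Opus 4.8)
The plan is to reduce the theorem to a finite-dimensional divergence formula via cylindrical approximation and then pass to the limit using the trace operator and surface measure machinery already set up. First I would establish the scalar identity; the vector statement follows by expanding $\Phi=\sum_n\psi_n e_n$, applying the scalar identity to each $\psi_n$ with the basis vector $e_n$, and summing (the convergence of the series being controlled by $\Phi\in W^{1,p}(G^{-1}(-\infty,0),\nu;H)$ together with $\diver_\nu\Phi=\sum_n(\partial_n\psi_n+\psi_n\partial_n\log w-\psi_n\hat e_n)$, which is the definition of the weighted divergence). So the heart of the matter is the scalar formula.

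For the scalar identity, fix $k$ and $\varphi\in W^{1,p}(G^{-1}(-\infty,0),\nu)$. Write $\nu=w\mu$ and note $\partial_k\varphi+\varphi\partial_k\log w=\frac1w\partial_k(\varphi w)$ on $\{w>0\}$, so the left-hand integrand times $w$ equals $\partial_k(\varphi w)-\varphi w\,\hat e_k$, which is exactly the Gaussian (unweighted) divergence-type integrand applied to the function $\varphi w$ restricted to the sublevel set $G^{-1}(-\infty,0)$. Thus the claim becomes
\[
\int_{G^{-1}(-\infty,0)}\bigl(\partial_k(\varphi w)-(\varphi w)\hat e_k\bigr)\,d\mu=\int_{G^{-1}(0)}\trace_{G^{-1}(0)}(\varphi\partial_k G)\frac{w}{\abs{\nabla_H G}_H}\,d\rho .
\]
This is the (unweighted) Gaussian divergence theorem on a sublevel set of a Sobolev function $G$ — precisely the type of statement available in the trace literature cited (\cite{CL14}, \cite{Fey01}), under Hypothesis \ref{ipotesi2}. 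The only thing to check is that $\varphi w$ belongs to the right unweighted Gaussian Sobolev space on $G^{-1}(-\infty,0)$: since $\varphi\in W^{1,p}(\cdot,\nu)$, $w\in W^{1,s}(X,\mu)$ with a precise version, and $\log w\in W^{1,t}(X,\mu)$, a Hölder argument with the exponent condition $p\geq t/(t-s')$ (equivalently $1/p\le 1/s'-1/t=1-1/s-1/t$ rearranged) gives $\varphi w\in W^{1,r}(G^{-1}(-\infty,0),\mu)$ for some $r>1$, which is enough to invoke the trace theorem. I would carry out this embedding exactly as in the earlier sections of the paper where the relation between $W^{1,p}(X,\nu)$ and $W^{1,r}(X,\mu)$ is analyzed, and where the trace of a product is identified with the product of traces, so that $\trace_{G^{-1}(0)}(\varphi w)=\trace_{G^{-1}(0)}(\varphi)\cdot w$ on $G^{-1}(0)$ (using that $w$ is a precise version, hence has a well-defined restriction to the hypersurface).

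The main obstacle is the integrability bookkeeping: one must ensure that every product appearing — $\varphi w$, $\varphi\,\partial_k\log w$, and the boundary term $\trace(\varphi\partial_k G)\,w/\abs{\nabla_H G}_H$ — is genuinely integrable against $\mu$ (resp. the surface measure $\rho$), and that the trace operator commutes with multiplication by the fixed Sobolev function $w$. The exponent hypothesis $p\ge t/(t-s')$ is exactly what makes the worst term, $\varphi\,\partial_k\log w\in\elle^1(\nu)$, work: $\varphi\in\elle^p(\nu)$, $\partial_k\log w\in\elle^t(\mu)$, and $\varphi\in\elle^{p'}(\mu)$ after absorbing one power of $w^{1/s}$ via Hölder with $s$, leaving room for $\partial_k\log w$ with $t>s'$. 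For the boundary term, Hypothesis \ref{ipotesi2}\eqref{Bastaaaaa} furnishes arbitrarily high integrability of $\abs{\nabla_H G}_H^{-1}$ near the hypersurface, and the trace theorem gives $\trace_{G^{-1}(0)}(\varphi\partial_k G)\in\elle^{r'}$-type control; combined with $w\in\elle^s$ this closes the estimate. Once all terms are legitimate, the finite-dimensional identity passes to the limit under cylindrical approximation of $\varphi$ (and of $G$, along the approximation already constructed for Hypothesis \ref{ipotesi2}) by dominated convergence, yielding the stated formula, and the vector case follows by summation as indicated above.
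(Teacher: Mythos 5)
Your route---absorbing the weight into the test function and applying the unweighted Gaussian divergence theorem of \cite{CL14} to $\varphi w$---is genuinely different from the paper's, which proves the weighted identity directly for bounded Lipschitz $\varphi$ (Proposition \ref{Divergence without trace}, whose proof is the CL14 argument carried out for the measure $\nu$) and then passes to the limit in $W^{1,p}(G^{-1}(-\infty,0),\nu)$ using the continuity of the weighted trace operator (Corollary \ref{Trace continuity}) together with Propositions \ref{Trace product} and \ref{trace and precise version}. Unfortunately your reduction has a genuine gap at its central step, the claim that $\varphi w\in W^{1,r}(G^{-1}(-\infty,0),\mu)$ for some $r>1$. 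The dangerous term in $\nabla_H(\varphi w)$ is $\varphi\nabla_H w=\varphi w\nabla_H\log w$, and $\int\abs{\varphi\nabla_Hw}_H^r\,d\mu=\int\abs{\varphi}^rw^{r-1}\abs{\nabla_H\log w}_H^r\,d\nu$. A three-fold H\"older estimate with exponents $p/r$, $a$, $b$ forces $1/a\geq(r-1)/(s-1)$ (so that $w^{(r-1)a+1}\in\elle^1(X,\mu)$) and $1/b\geq rs'/t$ (so that $\abs{\nabla_H\log w}_H^{rb}w\in\elle^1(X,\mu)$), hence $r/p+(r-1)/(s-1)+rs'/t\leq1$. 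At $r=1$ this reads $1/p\leq1-s'/t$, i.e.\ $p\geq t/(t-s')$, with \emph{equality} at the endpoint exponent; for any $r>1$ the left-hand side then strictly exceeds $1$. So at $p=t/(t-s')$, which the theorem includes, Hypothesis \ref{ipotesi1} only gives $\varphi\nabla_Hw\in\elle^1(X,\mu;H)$, no $\elle^r$ with $r>1$ is available, and the unweighted trace/divergence machinery cannot be invoked. (Your parenthetical ``equivalently $1/p\leq1/s'-1/t$'' is also not the same as $1/p\leq1-s'/t$.) The paper's route avoids this entirely: every estimate stays in the weighted space, where $p's'\leq t$ is exactly what places $\hat{e}_k-\partial_k\log w$ in $\elle^{p'}(X,\nu)$, and the boundary terms converge because the trace maps continuously into $\elle^q(G^{-1}(0),w\rho)$ for $q\in\left[1,p\frac{t-s'}{t}\right]$, a nonempty range even at the endpoint.

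Two further points, fixable but not free. Even for $p$ strictly above the threshold, membership of $\varphi w$ in $W^{1,r}(G^{-1}(-\infty,0),\mu)$ means membership in the \emph{closure} of Lipschitz functions on the sublevel set, not merely $\elle^r$-integrability of the function and of a formal gradient; since $w$ is neither bounded nor Lipschitz you need a double approximation ($\varphi_n$ Lipschitz, $w_m$ smooth cylindrical) plus an identification of the $\mu$-trace of $\varphi w$ with $\trace_{G^{-1}(0)}(\varphi)\cdot w_{|_{G^{-1}(0)}}$---which is essentially Propositions \ref{Trace product} and \ref{trace and precise version}, so you end up needing the paper's lemmas anyway. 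Finally, for the vector statement your summation over $n$ is the paper's argument, but it rests on the product formula $\trace_{G^{-1}(0)}(\psi_n\partial_nG)=\trace_{G^{-1}(0)}(\psi_n)\,\trace_{G^{-1}(0)}(\partial_nG)$ (Proposition \ref{Trace product}), not only on the series expression for $\diver_\nu$; you should invoke it explicitly.
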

The functions $\{\hat{e}_k\,|\, k\in\N\}$ and the partial derivative $\partial_k$ will be defined in Section \ref{Notations and preliminaries}, the spaces $W^{1,p}(G^{-1}(-\infty,0),\nu)$ and $W^{1,p}(G^{-1}(-\infty,0),\nu;H)$ will be defined in section \ref{Sobolev spaces on admissible sets}, the trace operator $\trace_{G^{-1}(0)}$ will be introduced in section \ref{Traces} and the divergence operator $\diver_\nu$ will be studied in Section \ref{Divergence operator}.

Weighted Gaussian measures are an important part of infinite dimensional analysis. They arise in many problems in stochastic analysis (see for example \cite{DPZ02} and \cite{DP04}) and in obstacle problems theory (see for example \cite{Zam17}). We used the theory of Sobolev spaces with respect to a log-concave weighted Gaussian measure in order to prove maximal Sobolev regularity estimates (see \cite{CF16} and \cite{CF17}) and to characterize the domain of elliptic operators in Wiener spaces (see \cite{ACF17}).

The paper is organized in the following way: in Section \ref{Notations and preliminaries} we will introduce the basic notations that we will use throughout the paper. 

In Section \ref{Weighted Sobolev spaces} we will introduce the Sobolev spaces $W^{1,p}(X,\nu)$ and study their basic properties. We will show that the following integration by parts formula holds:
\begin{gather*}
\int_X\partial_hf(x)d\nu(x)=\int_Xf(x)(\hat{h}(x)-\partial_h\log w(x))d\nu(x)\qquad \text{for every }f\in\fcon^\infty_b(X)\text{ and }h\in H,
\end{gather*}
where $\fcon_b^\infty(X)$ denotes the vector space of the \emph{smooth cylindrical functions} (the functions of the form $f(x)=\varphi(l_1(x),\ldots,l_n(x))$, for some $\varphi\in\con^{\infty}_b(\R^n)$, $l_1,\ldots,l_n\in X^*$ and $n\in\N$), while $\hat{h}$ and $\partial_h$ will be defined in \ref{Notations and preliminaries}.
Some of the results of this section can be found in \cite{AM88}, where a more general class of weights were consider on the space $\con[0,1]$.

In Section \ref{Divergence operator} we will introduce the divergence operator $\diver_\nu$ as minus the formal adjoint of the gradient operator along $H$ and investigate some of its basic properties. In Proposition \ref{divergence for W12} we will prove that under suitable hypotheses on the weight (namely (\ref{condition L2 divergence})) then $W^{1,2}(X,\nu;H)$ is contained in the domain of $\diver_\nu$ and $\diver_\nu \Phi\in\elle^2(X,\nu)$ for every $\Phi\in W^{1,2}(X,\nu;H)$. Example \ref{Exaple divergence} shows that the conditions in Proposition \ref{divergence for W12} are only sufficient to ensure that the domain of $\diver_\nu$ is not empty. Finally Proposition \ref{divergence with mu} shows that if $\Phi\in W^{1,q}(X,\mu;H)$ (the Gaussian Sobolev spaces) then $\diver_\nu\Phi$ belongs to $\elle^p(X,\nu)$ for some reasonable values of $p$. Furthermore an explicit formula for the calculation of $\diver_\nu$ is given in (\ref{Formula divergenza}).

In Section \ref{Sobolev spaces on admissible sets} we will introduce the Sobolev spaces on sets $G^{-1}(-\infty,0)$ in a similar way as in Section \ref{Weighted Sobolev spaces}, where $G$ is a function satisfying Hypothesis \ref{ipotesi2}. 

In Section \ref{Traces} we will introduce the trace $\trace_{G^{-1}(0)}f$ of a Sobolev function $f$ in $W^{1,p}(G^{-1}(-\infty,0),\nu)$ on $G^{-1}(0)$, where $G$ is a function satisfying Hypothesis \ref{ipotesi2}. In this section we give the proof of our main result (Theorem \ref{divergence theorem with traces}) and we show in Proposition \ref{trace and precise version}, that if $\ol{\varphi}$ is a $(1,p)$-precise version (see Section \ref{Notations and preliminaries}) of an element $\varphi$ in $W^{1,p}(G^{-1}(-\infty,0),\nu)$, then
\[\trace_{G^{-1}(0)}\varphi=\ol{\varphi}_{|_{G^{-1}(0)}}\qquad\rho\text{-a.e.}\]
where $\rho$ is the Feyel--de La Pradelle Hausdorff--Gauss surface measure introduced in \cite{Fey01} (see also Section \ref{Sobolev spaces on admissible sets} for more informations).

In Section \ref{Examples} we will show how our results can be applied to explicit examples. The chosen hypersurface will be the unit sphere and the hyperplanes. In Example \ref{esempio spazio hilbert} we study the weights $w_\lambda(x)=\operatorname{exp}(\lambda\norm{x}_X^2)$, where $X$ is a separable Hilbert space and $\lambda$ is a real number. In example \ref{esempio ell2} we study the weights $w_q(x)=\operatorname{exp}(\norm{x}_q)$ in $\ell_2$, where $q>1$ and
\[\norm{x}_q=\pa{\sum_{i=1}^{+\infty}\abs{x(i)}^q}^{\frac{1}{q}}.\]
We show that every $w_q$ satisfies Hypothesis \ref{ipotesi1} (note that $\norm{\cdot}_q$ is not continuous if $1<q<2$). Eventually in Example \ref{esempio C0} we study the weight $w(f)=\operatorname{exp}(\norm{f}_\infty)$ in the space $\con[0,1]$. In this case we will consider two type of surfaces: the hyperplanes and $\set{f\in\con[0,1]\tc \norm{f}_2=1}$. All the examples are concluded by some observations about the continuity of the trace operator from $W^{1,p}(G^{-1}(-\infty,0),\nu)$ to $\elle^q(G^{-1}(0),w\rho)$ for $q\in[1,p]$, where $\rho$ is the Feyel--de La Pradelle Hausdorff--Gauss surface measure.

\section{Notation and preliminaries} \label{Notations and preliminaries}

We will denote by $X^*$ the topological dual of $X$. We recall that $X^*\subseteq\elle^2(X,\mu)$. The linear operator $R_\mu:X^*\ra (X^*)'$
\begin{gather}\label{operatore di covariaza}
R_\mu x^*(y^*)=\int_X x^*(x)y^*(x)d\mu(x)
\end{gather}
is called the covariance operator of $\mu$. Since $X$ is separable, then it is actually possible to prove that $R_\mu:X^*\ra X$ (see \cite[Theorem 3.2.3]{Bog98}). We denote by $X^*_\mu$ the closure of $X^*$ in $\elle^2(X,\mu)$. The covariance operator $R_\mu$ can be extended by continuity to the space $X^*_\mu$, still by \eqref{operatore di covariaza}. By \cite[Lemma 2.4.1]{Bog98} for every $h\in H$ there exists a unique $g\in X^*_\mu$ with $h= R_\mu g$, in this case we set
\begin{gather*}\label{definizione hat}
\hat{h}:=g.
\end{gather*}

Throughout the paper we fix an orthonormal basis $\set{e_i}_{i\in\N}$ of $H$ such that $\hat{e}_i$ belongs to $X^*$, for every $i\in\N$. Such basis exists by \cite[Corollary 3.2.8(ii)]{Bog98}.

\subsection{Differentiability along $H$}

We say that a function $f:X\ra\R$ is \emph{differentiable along $H$ at $x$} if there exists $v\in H$ such that
\[\lim_{t\ra 0}\frac{f(x+th)-f(x)}{t}=\gen{v,h}_H,\]
uniformly with respect to $h\in H$, with $\abs{h}_H=1$. In this case, the vector $v\in H$ is unique and we set $\nabla_H f(x):=v$. Moreover, for every $k\in\N$ the derivative of $f$ in the direction of $e_k$ exists and it is given by
\begin{gather*}
\partial_k f(x):=\lim_{t\ra 0}\frac{f(x+te_k)-f(x)}{t}=\gen{\nabla_H f(x),e_k}_H.
\end{gather*}

We denote by $\mathcal{H}_2$ the space of the Hilbert--Schmidt operators in $H$, that is the space of the bounded linear operators $A:H\ra H$ such that $\norm{A}_{\mathcal{H}_2}^2=\sum_{i}\abs{Ae_i}^2_H$ is finite (see \cite{DU77}).
We say that a function $f:X\ra\R$ is \emph{twice differentiable along $H$ at $x$} if it is differentiable along $H$ at $x$ and there exists $A\in\mathcal{H}_2$ such that
\[H\text{-}\lim_{t\ra 0}\frac{\nabla_Hf(x+th)-\nabla_Hf(x)}{t}=A h,\]
uniformly with respect to $h\in H$, with $\abs{h}_H=1$. In this case the operator $A$ is unique and we set $\nabla_H^2 f(x):=A$. Moreover, for every $i,j\in\N$ we set
\begin{gather*}
\partial_{ij} f(x):=\lim_{t\ra 0}\frac{\partial_jf(x+te_i)-\partial_jf(x)}{t}=\langle\nabla_H^2 f(x)e_j,e_i\rangle_H.
\end{gather*}

\subsection{Special classes of functions}\label{Special classes of functions}

For $k\in\N\cup\set{\infty}$, we denote by $\fcon^k(X)$ ($\fcon_b^k(X)$ respectively) the space of the cylindrical function of the type
\(f(x)=\varphi(x^*_1(x),\ldots,x^*_n(x))\)
where $\varphi\in\con^{k}(\R^n)$ ($\varphi\in\con^{k}_b(\R^n)$, respectively) and $x^*_1,\ldots,x^*_n\in X^*$, for some $n\in\N$. We remark that by a classical argument it is possible to prove that $\fcon^\infty_b(X)$ is dense in $\elle^p(X,\nu)$ for all $p\geq 1$ (see \cite{DPL14}). We recall that if $f\in \fcon^2(X)$, then $\partial_{ij}f(x)=\partial_{ji}f(x)$ for every $i,j\in\N$ and $x\in X$.

Let $L_p$ be the infinitesimal generator of the \emph{Ornstein--Uhlenbeck semigroup} $\T(t)$ in $\elle^p(X,\mu)$, where
\[T(t)f(x):=\int_Xf\pa{e^{-t}x+(1-e^{-2t})^{\frac{1}{2}}y}d\mu(y)\qquad\text{ for }t>0.\]
For $k=1,2,3$, we define the \emph{$C_{k,p}$-capacity} of an open set $A\subseteq X$ as
\[C_{k,p}(A):=\inf\set{\norm{f}_{\elle^p(X,\mu)}\tc (I-L_p)^{-\frac{k}{2}}f\geq 1\ \mu\text{-a.e. in }A}.\]
For a general Borel set $B\subseteq X$ we let $C_{k,p}(B)=\inf\set{C_{k,p}(A)\tc B\subseteq A\text{ open}}$. By $f\in W^{k,p}(X,\mu)$ we mean an equivalence class of functions and we call every element ``version''. For any $f\in W^{k,p}(X,\mu)$ there exists a version $\ol f$ of $f$ which is Borel measurable and \emph{$C_{k,p}$-quasicontinuous}, i.e. for every $\eps>0$ there exists an open set $A\subseteq X$ such that $C_{k,p}(A)\leq \eps$ and $\ol{f}_{|_{X\ssm A}}$ is continuous. Furthermore, for every $r>0$
\[C_{k,p}\pa{\set{x\in X\tc \abs{\ol{f}(x)}>r}}\leq\frac{1}{r}\norm{(I-L_p)^{-\frac{k}{2}}\ol{f}}_{\elle^p(X,\mu)}.\]
See \cite[Theorem 5.9.6]{Bog98}. Such a version is called a \emph{$(k,p)$-precise version of $f$}. Two precise versions of the same $f$ coincide outside sets with null $C_{k,p}$-capacity. All our results will be independent on our choice of a precise version of $G$ in Hypothesis \ref{ipotesi2}. With obvious modification the same definition can be adapted to functions belonging to $W^{k,p}(X,\mu;H)$ and $W^{k,p}(X,\mu;\mathcal{H}_2)$.

\subsection{Sobolev spaces}
The Gaussian Sobolev spaces $W^{1,p}(X,\mu)$ and $W^{2,p}(X,\mu)$, with $p\geq 1$, are the completions of the \emph{smooth cylindrical functions} $\fcon_b^\infty(X)$ in the norms
\begin{gather*}
\norm{f}_{W^{1,p}(X,\mu)}:=\norm{f}_{\elle^p(X,\mu)}+\pa{\int_X\abs{\nabla_H f(x)}_H^pd\mu(x)}^{\frac{1}{p}};\\
\norm{f}_{W^{2,p}(X,\mu)}:=\norm{f}_{W^{1,p}(X,\mu)}+\pa{\int_X\norm{\nabla_H^2 f(x)}^p_{\mathcal{H}_2}d\mu(x)}^{\frac{1}{p}}.
\end{gather*}
Such spaces can be identified with subspaces of $\elle^p(X,\mu)$ and the (generalized) gradient and Hessian along $H$, $\nabla_H f$ and $\nabla_H^2 f$, are well defined and belong to $\elle^p(X,\mu;H)$ and $\elle^p(X,\mu;\mathcal{H}_2)$, respectively. The spaces $W^{1,p}(X,\mu;H)$ are defined in a similar way, replacing smooth cylindrical functions with $H$-valued smooth cylindrical functions (i.e. the linear span of the functions $x\mapsto f(x)h$, where $f$ is a smooth cylindrical function and $h\in H$). For more information see \cite[Section 5.2]{Bog98}.

We remind the reader that the following integration by parts formula holds for every $f\in W^{1,p}(X,\mu)$, with $p>1$, and $h\in H$
\begin{gather}\label{Gaussian int by part}
\int_X\partial_hf(x)d\mu(x)=\int_Xf(x)\hat{h}(x)d\mu(x).
\end{gather}

\subsection{Surface measures} 
For a comprehensive treatment of surface measures in infinite dimensional Banach spaces with Gaussian measures we refer to \cite{FP91}, \cite{Fey01} and \cite{CL14}. We recall the definition of the Feyel--de La Pradelle Hausdorff--Gauss surface measure. If $m\geq 2$ and $F=\R^m$ equipped with a norm $\norm{\cdot}_F$, we define
\[d\theta^F(x)=\frac{1}{(2\pi)^{\frac{m}{2}}}e^{-\frac{\norm{x}_F^2}{2}}dH_{m-1}(x),\]
where $H_{m-1}$ is the spherical $(m-1)$-dimensional Hausdorff measure in $\R^m$, i.e.
\[H_{m-1}(A)=\lim_{\delta\ra 0}\inf\set{\sum_{n\in\N}w_{m-1}r_n^{m-1}\tc A\subseteq \bigcup_{n\in\N}B(x_n,r_n),\ r_n<\delta,\ \text{for every }n\in\N},\]
where $w_{m-1}=\pi^{\frac{m-1}{2}}(\Gamma(\frac{m+1}{2}))^{-1}$. For every $m$-dimensional $F\subseteq H$ we consider the orthogonal projection (along $H$) on $F$:
\[x\mapsto\sum_{n=1}^m\gen{x,f_n}_Hf_n\qquad x\in H\]
where $\set{f_n}_{n=1}^m$ is an orthonormal basis of $F$. There exists a $\mu$-measurable projection $\pi^F$ on $F$, defined in the whole $X$, that extends it (see \cite[Theorem 2.10.11]{Bog98}). We denote by $\tilde{F}:=\ker\pi^F$ and by $\mu_{\tilde{F}}$ the image of the measure $\mu$ on $\tilde{F}$ through $I-\pi^F$. Finally we denote by $\mu_F$ the image of the measure $\mu$ on $F$ through $\pi^F$, which is the standard Gaussian measure on $\R^m$ if we identify $F$ with $\R^m$. Let $A\subseteq X$ be a Borel set and identify $F$ with $\R^m$, we set
\[\rho^F(A):=\int_{\ker \pi^F}\theta^F(A_x)d\mu_{\tilde{F}}(x),\]
where $A_x=\set{y\in F\tc x+y\in A}$. The map $F\mapsto\rho^F(A)$ is well defined and increasing, namely if $F_1\subseteq F_2$ are finite dimensional subspaces of $H$, then $\rho^{F_1}(A)\leq \rho^{F_2}(A)$ (see \cite[Lemma 3.1]{AMP10} and \cite[Proposition 3.2]{Fey01}). The Feyel--de La Pradelle Hausdorff--Gauss surface measure is defined by
\[\rho(A)=\sup\set{\rho^F(A)\tc F\subseteq H,\ F\text{ is a finite dimensional subspace}}.\]

\section{Weighted Sobolev spaces}\label{Weighted Sobolev spaces}

We want to define the Sobolev space $W^{1,p}(X,\nu)$ as the domain of the closure of the gradient operator along $H$. A natural procedure is to prove an integration by parts formula.

\begin{lemma}\label{int by part}
Let $f\in\fcon^{\infty}_b(X)$ and $h\in H$. The following formula holds:
\begin{gather}\label{int}
\int_X\partial_hf(x)d\nu(x)=\int_Xf(x)(\hat{h}(x)-\partial_h\log w(x))d\nu(x).
\end{gather}
\end{lemma}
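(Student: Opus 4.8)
The plan is to reduce the weighted integration by parts formula to the classical Gaussian one \eqref{Gaussian int by part}, using the product $fw$ as the test function, and then to deal carefully with the fact that $fw$ need not itself be a smooth cylindrical function. First I would write the left-hand side as $\int_X \partial_h f\, w\, d\mu$ and observe the formal identity $\partial_h(fw) = (\partial_h f) w + f\, \partial_h w = (\partial_h f) w + f w\, \partial_h \log w$, so that
\[
\int_X (\partial_h f)\, w\, d\mu = \int_X \partial_h(fw)\, d\mu - \int_X f w\, \partial_h \log w\, d\mu,
\]
and the desired formula would follow immediately \emph{if} we could apply \eqref{Gaussian int by part} to $fw$, since $\int_X \partial_h(fw)\, d\mu = \int_X fw\, \hat h\, d\mu = \int_X f \hat h\, d\nu$. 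So the whole issue is to justify the Gaussian integration by parts for the product $fw$, which is only known a priori to lie in $W^{1,s}(X,\mu)$ (not to be bounded or cylindrical), and to make sense of the term $f w\, \partial_h \log w$.

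The key steps, in order, are: (1) Since $f \in \fcon^\infty_b(X)$ is bounded with bounded derivative and $w \in W^{1,s}(X,\mu)$, verify by the Leibniz rule for Gaussian Sobolev spaces that $fw \in W^{1,s}(X,\mu)$ with $\nabla_H(fw) = f \nabla_H w + w \nabla_H f$; in particular $\partial_h(fw) \in \elle^s(X,\mu)$. (2) Apply the Gaussian integration by parts formula \eqref{Gaussian int by part}, valid for $W^{1,p}$ with $p > 1$ and here with $p = s > 1$, to get $\int_X \partial_h(fw)\, d\mu = \int_X (fw)\, \hat h\, d\mu$; here one should check that $fw \cdot \hat h \in \elle^1(X,\mu)$, which holds because $fw \in \elle^s$ and $\hat h \in \elle^{s'} \subseteq X^*_\mu$ (Gaussian measures have all moments). (3) Rewrite $f \nabla_H w = f w \nabla_H \log w$ on the set $\{w > 0\}$, which is $\mu$-a.e.\ all of $X$; the identity $\nabla_H w = w \nabla_H \log w$ is the content of the chain rule relating $w$ and $\log w$, both of which are Sobolev by Hypothesis \ref{ipotesi1}, and it is here that the hypothesis $\log w \in W^{1,t}(X,\mu)$ enters to guarantee $\partial_h \log w$ is a well-defined measurable function. (4) Check the integrability of the new term: $f w\, \partial_h \log w \in \elle^1(X,\mu)$ because $f$ is bounded, $w \in \elle^s$, $\partial_h\log w \in \elle^t$ with $t > s'$, so $w\,\partial_h\log w \in \elle^r$ for $r = \frac{st}{s+t} > 1$ by H\"older; this is exactly the role of the technical condition $t > s'$ announced in Hypothesis \ref{ipotesi1}(2). (5) Assemble the three integrals and rewrite everything against $\nu = w\mu$ to obtain \eqref{int}.

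The main obstacle is step (1)–(2): justifying that the product $fw$ lies in the Gaussian Sobolev space $W^{1,s}$ and that the integration by parts formula \eqref{Gaussian int by part} legitimately applies to it, since $w$ is only assumed Sobolev and not, say, continuous or bounded (indeed the examples with $w_q = \exp(\|x\|_q)$ for $1 < q < 2$ show $w$ genuinely need not be continuous). The cleanest route is to approximate $w$ in $W^{1,s}(X,\mu)$ by smooth cylindrical functions $w_n \to w$, apply \eqref{Gaussian int by part} to $f w_n \in \fcon^\infty_b(X)$ where it is elementary, and pass to the limit; convergence of $\int_X \partial_h(f w_n)\, d\mu$ and of $\int_X f w_n \hat h\, d\mu$ follows from the $W^{1,s}$ and $\elle^s$ convergence together with $\hat h \in \elle^{s'}$, while the chain rule identity $\nabla_H w_n \to \nabla_H w = w\nabla_H\log w$ requires a little care but is standard once one also approximates $\log w$ consistently. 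This approximation argument, together with the H\"older bookkeeping that pins down why $t > s'$ is the right threshold, is the heart of the proof; the rest is routine.
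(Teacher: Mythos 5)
Your proposal is correct and follows essentially the same route as the paper: the paper's proof is exactly the chain $\int_X \partial_h f\, w\, d\mu = \int_X \partial_h(fw)\, d\mu - \int_X f\,\partial_h w\, d\mu$, followed by the Gaussian integration by parts \eqref{Gaussian int by part} applied to $fw$ and the identification $\partial_h w/w = \partial_h\log w$. The paper states this in four lines without the justifications you supply (Leibniz rule in $W^{1,s}$, approximation of $w$ by cylindrical functions, and the H\"older bookkeeping showing where $t>s'$ enters), so your version is simply a more carefully documented rendition of the same argument.
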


\begin{proof}
Using the integration by parts formula for the Gaussian measure  (\ref{Gaussian int by part}) we get
\begin{gather*}
\int_X\partial_hf(x)d\nu(x)=\int_X\partial_hf(x)w(x)d\mu(x)=\int_X\partial_h(f(x)w(x))d\mu(x)-\int_Xf(x)
\partial_hw(x)d\mu(x)=\\
=\int_X\hat{h}(x)f(x)w(x)d\mu(x)-\int_Xf(x)\frac{\partial_hw(x)}{w(x)}w(x)d\mu(x)=\int_Xf(x)(\hat{h}(x)-\partial_h\log w(x))d\nu(x).
\end{gather*}
\end{proof}
We are now ready to prove the closability of the gradient operator along $H$.

\begin{pro}\label{closure gradient}
The operator $\nabla_H:\fcon^{\infty}_b(X)\ra L^p(X,\nu;H)$ is closable in $L^p(X,\nu)$, whenever $p\geq\frac{t}{t-s'}$.
\end{pro}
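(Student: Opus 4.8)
The plan is to use the standard criterion for closability of a densely defined operator between $L^p$ spaces: it suffices to show that if $f_n \in \fcon^\infty_b(X)$ with $f_n \to 0$ in $L^p(X,\nu)$ and $\nabla_H f_n \to \Phi$ in $L^p(X,\nu;H)$, then $\Phi = 0$ $\nu$-a.e. To identify $\Phi$, I would test against a suitable dense family of $H$-valued functions and pass to the limit using the integration by parts formula of Lemma \ref{int by part}. Concretely, fix $h\in H$ and $g\in\fcon^\infty_b(X)$; then for each $n$, applying \eqref{int} to the product $f_n g$ (which is again in $\fcon^\infty_b(X)$) and using the Leibniz rule $\partial_h(f_n g) = g\,\partial_h f_n + f_n\,\partial_h g$ gives
\[
\int_X g\,\partial_h f_n\,d\nu = -\int_X f_n\,\partial_h g\,d\nu + \int_X f_n g\,(\hat h - \partial_h\log w)\,d\nu.
\]
The goal is to let $n\to\infty$ and conclude $\int_X g\,\gen{\Phi,h}_H\,d\nu = 0$ for all such $g,h$, whence $\Phi=0$.

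The key step is justifying the passage to the limit in each of the three integrals, and this is where the exponent condition $p \geq \frac{t}{t-s'}$ enters. The left-hand side converges to $\int_X g\,\gen{\Phi,h}_H\,d\nu$ because $g$ is bounded and $\partial_h f_n = \gen{\nabla_H f_n, h}_H \to \gen{\Phi,h}_H$ in $L^p(X,\nu)$. The first term on the right converges to $0$ since $f_n \to 0$ in $L^p(X,\nu)$ and $\partial_h g$ is bounded. The delicate term is $\int_X f_n g\,(\hat h - \partial_h\log w)\,d\nu$: here $g$ is bounded, $\hat h \in X^*_\mu \subseteq L^r(X,\mu)$ for all $r$ (Gaussian integrability, hence also in every $L^r(X,\nu)$ after absorbing $w\in L^s\subseteq L^1$), but $\partial_h\log w$ only lies in $L^t(X,\mu)$. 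One needs $f_n \cdot \partial_h\log w \to 0$ in $L^1(X,\nu) = L^1(X,w\mu)$. The natural estimate is Hölder: writing the integrand as $f_n g\,\partial_h\log w\cdot w$ against $d\mu$, one splits $w = w^{1/s'}\cdot w^{1/s}$ or uses that $w^{?}$... more carefully, one should apply Hölder with three factors — $|f_n|^{?}$ in $L^p(\nu)$, $|\partial_h\log w|$ in $L^t(\mu)$, and a leftover power of $w$ — and the arithmetic of the exponents is exactly what forces $p \geq \frac{t}{t-s'}$; indeed $\frac{t}{t-s'}$ is the conjugate-type exponent making $\frac{1}{p} + \frac{1}{t} + (\text{power of }w\text{ term}) = 1$ work out with $w\in L^s$, i.e. $\frac1p \le 1 - \frac1t - \frac1{s} \cdot(\text{something})$; I expect the clean statement to be that $\frac1p + \frac1t \le \frac1{s}$ rearranges to $p \ge \frac{t}{t-s'}$ using $\frac1s + \frac1{s'}=1$.

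The main obstacle is therefore the integrability bookkeeping for the term involving $\partial_h\log w$: one must verify that $f_n g\,\partial_h\log w \in L^1(X,\nu)$ uniformly and that it tends to $0$, which requires carefully allocating the available integrability of $w$ (namely $w\in L^s(X,\mu)$ from $w\in W^{1,s}$, using Gaussian integrability of $|\nabla_H w|_H$ is not even needed here) among the Hölder factors. Everything else — density of $\fcon^\infty_b(X)$ in $L^p(X,\nu)$, the product rule, the Gaussian integrability of $\hat h$ — is routine. I would also remark that since $h\in H$ and $g\in\fcon^\infty_b(X)$ range over families large enough to separate points of $L^{p'}(X,\nu;H)$ (every $e_k$ works, and finite linear combinations $\sum g_k e_k$ are dense), vanishing of all these pairings indeed yields $\Phi=0$ $\nu$-a.e., completing the proof of closability.
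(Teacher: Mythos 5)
Your proposal is correct and follows essentially the same route as the paper: show $\int_X\gen{\Phi,e_n}_H u\,d\nu=0$ for $u\in\fcon^{\infty}_b(X)$ by applying the integration by parts formula of Lemma \ref{int by part} to the product $f_ku$ and passing to the limit term by term, the only delicate term being the one involving $\partial_n\log w$. The one loose end is your exponent bookkeeping: the guessed relation $\frac1p+\frac1t\le\frac1s$ is not the right one; the correct three-factor H\"older split with respect to $\mu$ uses exponents $p$, $p's$, $p's'$, namely
\[
\int_X\abs{f_k}\,\abs{\partial_n\log w}\,w\,d\mu\le\pa{\int_X\abs{f_k}^pw\,d\mu}^{\frac1p}\pa{\int_Xw^s\,d\mu}^{\frac{1}{p's}}\pa{\int_X\abs{\partial_n\log w}^{p's'}d\mu}^{\frac{1}{p's'}},
\]
whose last factor is finite precisely when $p's'\le t$, which is equivalent to $p\ge\frac{t}{t-s'}$.
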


\begin{proof}
Let $(f_k)_{k\in\N}\subseteq\fcon^{\infty}_b(X)$ be such that
\begin{align*}
&\lim_{k\ra+\infty}f_k=0 \qquad \text{ in } L^p(X,\nu);\\
&\lim_{k\ra+\infty}\nabla_H f_k=\Phi \qquad \text{ in } L^p(X,\nu;H).
\end{align*}
We want to prove that $\Phi(x)=0$, $\nu$-a.e. To this aim we will show that
\[\int_X\gen{\Phi(x),e_n}_Hu(x)d\nu(x)=0\]
for every $n\in\N$ and $u\in\fcon^{\infty}_b(X)$. Recall that if $f,g\in\fcon^{\infty}_b(X)$, then $fg\in\fcon_b^{\infty}(X)$. By the integration by parts formula (Lemma \ref{int by part}), we get
\begin{gather}\label{1}
\int_X\partial_nf_k(x)u(x)d\nu(x)=\int_Xf_k(x)(\hat{e}_n(x)-\partial_n\log w(x))u(x)d\nu(x)-\int_Xf_k(x)\partial_nu(x)d\nu(x).
\end{gather}
By the H\"older inequality we get
\begin{gather*}
\int_X\abs{\partial_nf_k(x)-\gen{\Phi(x),e_n}}
\abs{u(x)}d\nu(x)\leq\\ 
\leq
\pa{\int_X\abs{\partial_nf_k(x)-\gen{\Phi(x),e_n}}^pd\nu(x)}^{\frac{1}{p}}\pa{\int_X\abs{u(x)}^{p'}d\nu(x)}^{\frac{1}{p'}}\xra{k\ra+\infty}0.
\end{gather*}
Furthermore
\begin{gather*}
\int_X\abs{f_k(x)\partial_nu(x)}d\nu(x)\leq \pa{\int_X\abs{f_k(x)}^pd\nu(x)}^{\frac{1}{p}}\pa{\int_X\abs{\partial_n u(x)}^{p'}d\nu(x)}^{\frac{1}{p'}}\xra{k\ra+\infty}0.
\end{gather*}
Lastly
\begin{gather*}
\int_X\abs{f_k}\abs{\hat{e}_n-\partial_n\log w}\abs{u}d\nu
\leq\norm{u}_\infty\int_X\abs{f_k}
\abs{\hat{e}_n-\partial_n\log w}d\nu\leq_{(1)}\\
\leq_{(1)}\norm{u}_\infty
\pa{\int_X\abs{f_k}^pd\nu}^{\frac{1}{p}}
\pa{\int_X\abs{\hat{e}_n-\partial_n\log w}^{p'}wd\mu}^{\frac{1}{p'}}\leq_{(2)}\\
\leq_{(2)}\norm{u}_\infty
\pa{\int_X\abs{f_k}^pd\nu}^{\frac{1}{p}}
\pa{\int_X w^sd\mu}^{\frac{1}{s}}
\pa{\int_X\abs{\hat{e}_n-\partial_n\log w}^{p's'}d\mu}^{\frac{1}{p's'}}\xra{k\ra+\infty}0;
\end{gather*}
where both $(1)$ and $(2)$ follow applying the H\"older inequality. Note that the last integral is finite whenever $p's'\leq t$, i.e. $p\geq \frac{t}{t-s'}$ which is an assumption. Letting $k\ra+\infty$ in (\ref{1}) we get
\[\int_X\gen{\Phi(x),e_n}u(x)d\nu(x)=0\]
for every $n\in\N$ and $u\in\fcon_b^{\infty}(X)$.
\end{proof}

\begin{defn}[Weighted Sobolev spaces]\label{Weightes Sobolev space definition}
Let $p\geq\frac{t}{t-s'}$. We denote by $W^{1,p}(X,\nu)$ the domain of the closure of $\nabla_H$ (which we still denote by the symbol $\nabla_H$) in $L^p(X,\nu)$. It is a Banach space with the graph norm
\[\norm{f}_{W^{1,p}(X,\nu)}=\pa{\int_X\abs{f(x)}^pd\nu(x)}^{\frac{1}{p}}+\pa{\int_X\abs{\nabla_Hf(x)}_H^pd\nu(x)}^{\frac{1}{p}.}\]
In the same way we define $W^{1,p}(X,\nu;H)$ using $H$-valued smooth cylindrical functions. Furthermore Lemma \ref{int by part} holds for every $f\in W^{1,p}(X,\nu)$.
\end{defn}
Using a standart argument it is possible to prove that \eqref{int} holds for every $f\in W^{1,p}(X,\nu)$, with $p\geq\frac{t}{t-s'}$, and $h\in H$. The first thing we want to show is the following result about the coincidence of the closure of the gradient operator along $H$ in $\elle^p(X,\nu)$ and $\elle^q(X,\mu)$.

\begin{pro}\label{coincidenza gradienti}
In this proposition we will denote by $\nabla_H^{\mu,q}$ and $\nabla_H^{\nu,p}$ the closure of the gradient operator along $H$ in $\elle^q(X,\mu)$ and $\elle^{p}(X,\nu)$, respectively. Let $p\geq\frac{t}{t-s'}$ and $(f_n)_{n\in\N}\subseteq \fcon^\infty_b(X)$. If there exists $f\in W^{1,q}(X,\mu)$, for some $q> ps'$, such that
\begin{gather*}
\lim_{n\ra+\infty} f_n=f\qquad\text{in }\elle^q(X,\mu);\\
\lim_{n\ra+\infty} \nabla_H f_n=\nabla_H^{\mu,q}f\qquad\text{in }\elle^q(X,\mu;H),
\end{gather*}
then
\begin{gather*}
\lim_{n\ra+\infty} f_n=f\qquad\text{in }\elle^p(X,\nu);\\
\lim_{n\ra+\infty} \nabla_H f_n=\nabla_H^{\nu,p}f\qquad\text{in }\elle^p(X,\nu;H).
\end{gather*}
Furthermore $\nabla_H^{\nu,p}f=\nabla_H^{\mu,q}f$ $\mu$-a.e.
\end{pro}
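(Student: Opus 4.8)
The plan is to upgrade the convergence of the sequence $(f_n)$ from the Gaussian $\elle^q$-topology to the weighted $\elle^p(X,\nu)$-topology, using only H\"older's inequality together with the integrability built into Hypothesis \ref{ipotesi1}, and then to identify the two limit gradients by uniqueness of limits in $\elle^p(X,\nu;H)$.

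First I would show $f_n\to f$ in $\elle^p(X,\nu)$. Since $\nu = w\mu$ with $w\in\elle^s(X,\mu)$, for any $g$ one has $\int_X\abs{g}^p\,d\nu=\int_X\abs{g}^p w\,d\mu$; applying H\"older with exponents $s$ and $s'$ gives
\[\int_X\abs{g}^p\,d\nu\leq\pa{\int_X w^s\,d\mu}^{\frac{1}{s}}\pa{\int_X\abs{g}^{ps'}\,d\mu}^{\frac{1}{s'}}.\]
Taking $g=f_n-f$, the first factor is finite (it is $\norm{w}_{\elle^s(X,\mu)}^s$, finite by Hypothesis \ref{ipotesi1}(1)), and the second factor tends to $0$ because $ps'\leq q$ (the hypothesis $q>ps'$) and $f_n\to f$ in $\elle^q(X,\mu)$, hence in $\elle^{ps'}(X,\mu)$ after noting $\mu$ is a probability measure so $\elle^q\hookrightarrow\elle^{ps'}$. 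The same computation with $g=\abs{\nabla_H f_n-\nabla_H^{\mu,q}f}_H$ shows $\nabla_H f_n\to\nabla_H^{\mu,q}f$ in $\elle^p(X,\nu;H)$, using that $\nabla_H f_n\to\nabla_H^{\mu,q}f$ in $\elle^q(X,\mu;H)$ and again $ps'\leq q$.

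Now $(f_n)\subseteq\fcon^\infty_b(X)$ with $f_n\to f$ in $\elle^p(X,\nu)$ and $\nabla_H f_n$ convergent in $\elle^p(X,\nu;H)$. Since $\nabla_H^{\nu,p}$ is by definition (Definition \ref{Weightes Sobolev space definition}, via Proposition \ref{closure gradient}) the closure of $\nabla_H$ on $\fcon^\infty_b(X)$ in $\elle^p(X,\nu)$, the pair $(f,\lim_n\nabla_H f_n)$ lies in its graph, so $f\in W^{1,p}(X,\nu)$ and $\nabla_H^{\nu,p}f=\lim_n\nabla_H f_n$ in $\elle^p(X,\nu;H)$. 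But we just identified that limit as $\nabla_H^{\mu,q}f$ (the $\elle^p(X,\nu;H)$ and $\elle^q(X,\mu;H)$ limits of the same sequence agree $\mu$-a.e., since $\nu$ and $\mu$ are mutually absolutely continuous on $\{w>0\}$, which is $\mu$-conull), whence $\nabla_H^{\nu,p}f=\nabla_H^{\mu,q}f$ $\mu$-a.e.

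The only genuinely delicate point is the bookkeeping of exponents: one must check that the single assumption $q>ps'$ simultaneously makes the weight factor $\int_X w^s\,d\mu$ finite (this is independent of $p,q$ and comes straight from Hypothesis \ref{ipotesi1}(1)) and makes $\elle^q(X,\mu)$ embed into $\elle^{ps'}(X,\mu)$; the latter is where $q>ps'$ is used, and it is clean because $\mu$ is a probability measure. Everything else is a direct application of H\"older and the abstract definition of the closure, so I expect no real obstacle beyond this.
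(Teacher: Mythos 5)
Your proof is correct and follows essentially the same route as the paper: H\"older's inequality with exponents $s,s'$ to split off $\pa{\int_X w^s\,d\mu}^{1/s}$, the embedding $\elle^q(X,\mu)\hookrightarrow\elle^{ps'}(X,\mu)$ coming from $q>ps'$ and $\mu$ being a probability measure, and the same computation for the gradients. The only difference is that you spell out the identification $\nabla_H^{\nu,p}f=\nabla_H^{\mu,q}f$ via the closedness of the graph, which the paper dismisses as "obvious"; your version is a slightly more complete write-up of the identical argument.
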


\begin{proof}
By the H\"older inequality we get
\begin{gather*}
\int_X\abs{f_n-f}^pd\nu\leq\pa{\int_Xw^sd\mu}^{\frac{1}{s}}\pa{\int_X\abs{f_n-f}^{ps'}d\mu}^{\frac{1}{s'}}\leq \\
\leq\pa{\int_Xw^sd\mu}^{\frac{1}{s}}\pa{\int_X\abs{f_n-f}^{q}d\mu}^{\frac{p}{q}}\xra{n\ra+\infty}0.
\end{gather*}
So $\lim_{n\ra+\infty} f_n=f$ in $\elle^p(X,\nu)$. In the same way
\begin{gather*}
\int_X\abs{\nabla_H f_n-\nabla_H^{\mu,q}f}_H^pd\nu\leq\pa{\int_Xw^sd\mu}^{\frac{1}{s}}\pa{\int_X\abs{\nabla_H f_n-\nabla_H^{\mu,q}f}_H^{ps'}d\mu}^{\frac{1}{s'}}\leq \\
\leq\pa{\int_Xw^sd\mu}^{\frac{1}{s}}\pa{\int_X\abs{\nabla_H f_n-\nabla_H^{\mu,q}f}_H^{q}d\mu}^{\frac{p}{q}}\xra{n\ra+\infty}0.
\end{gather*}
So $\lim_{n\ra+\infty} \nabla_H f_n=\nabla_H^{\mu,q}f$ in $\elle^p(X,\nu;H)$. The furthermore part is now obvious.
\end{proof}

It is important to observe some basic properties of the space $W^{1,p}(X,\nu)$.

\begin{pro}\label{proprerties of sobolev space}
Let $p\geq\frac{t}{t-s'}$. The following holds:
\begin{enumerate}
\item $W^{1,p}(X,\nu)$ is reflexive; \label{rifelssivita' in nu}

\item for every $q\in (ps',+\infty)$, $W^{1,q}(X,\mu)\hookrightarrow W^{1,p}(X,\nu)$. In particular if $G$ satisfies Hypothesis \ref{ipotesi2} then $G\in W^{1,b}(X,\nu)$ for every $b\geq\frac{t}{t-s'}$. The same is true for the spaces $W^{1,q}(X,\mu;H)$ and $W^{1,p}(X,\nu;H)$; \label{W(mu) in W(nu)}

\item if $\int_X w^{r(r-1)^{-1}}d\mu<+\infty$ for some $r\in(0,1)$, then $W^{1,p}(X,\nu)\hookrightarrow W^{1,pr}(X,\mu)$.  The same is true for the space $W^{1,p}(X,\nu;H)$; \label{W(nu) in W(mu)}

\item let $(\varphi_n)_{n\in\N}\subseteq W^{1,p}(X,\nu)$. If $\varphi_n$ converges pointwise $\nu$-a.e. to $\varphi$ and 
\[\sup_{n\in\N}\norm{\varphi_n}_{W^{1,p}(X,\nu)}<+\infty,\]
then $\varphi\in W^{1,p}(X,\nu)$; \label{Banach Saks}

\item let $q\geq\frac{t}{t-s'}$, $\varphi\in W^{1,p}(X,\nu)$ and $\psi\in W^{1,q}(X,\nu)$. If $\frac{pq}{p+q}\geq\frac{t}{t-s'}$ then
\[\varphi\psi\in W^{1,r}(X,\nu)\qquad\text{for every }r\in \left[\frac{t}{t-s'},\frac{pq}{p+q}\right],\]
and $\nabla_H(\varphi\psi)=\varphi\nabla_H\psi+\psi\nabla_H\varphi$. \label{prodotto funzioni in W}
\end{enumerate}
\end{pro}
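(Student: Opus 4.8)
The plan is to treat the five items in order, since several of them reduce to the Hölder-inequality arguments already carried out in Proposition \ref{coincidenza gradienti} together with a density/closure argument. For \eqref{rifelssivita' in nu}, I would use the standard trick: the map $f\mapsto(f,\nabla_H f)$ embeds $W^{1,p}(X,\nu)$ isometrically onto a closed subspace of $\elle^p(X,\nu)\times \elle^p(X,\nu;H)$; since $p\geq\frac{t}{t-s'}>1$ and $\nu$ is a finite measure, both factors are reflexive (the $H$-valued $\elle^p$ is reflexive because $H$ is a Hilbert space), hence so is the product and any closed subspace of it, and therefore so is $W^{1,p}(X,\nu)$.

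For \eqref{W(mu) in W(nu)}, the content is exactly Proposition \ref{coincidenza gradienti}: given $q>ps'$ and $f\in W^{1,q}(X,\mu)$, pick approximating smooth cylindrical functions $(f_n)$ converging to $f$ in $W^{1,q}(X,\mu)$; Proposition \ref{coincidenza gradienti} shows they also converge to $f$ in $W^{1,p}(X,\nu)$ with $\nabla_H^{\nu,p}f=\nabla_H^{\mu,q}f$, and the same Hölder estimates give the norm bound $\norm{f}_{W^{1,p}(X,\nu)}\leq C\,\norm{f}_{W^{1,q}(X,\mu)}$ with $C$ depending on $\norm{w}_{\elle^s(X,\mu)}$, proving the continuous embedding. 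The statement about $G$ is then immediate: $G\in W^{2,q}(X,\mu)\subseteq W^{1,q}(X,\mu)$ for every $q>1$ by Hypothesis \ref{ipotesi2}, so for any $b\geq\frac{t}{t-s'}$ one chooses $q>bs'$ and applies the embedding; the $H$-valued case is identical. For \eqref{W(nu) in W(mu)}, the argument is the reverse Hölder estimate: if $\int_X w^{r(r-1)^{-1}}d\mu<+\infty$ then for $f\in\fcon_b^\infty(X)$ one writes $\int_X\abs{f}^{pr}d\mu=\int_X\abs{f}^{pr}w^{-r}\,d\nu\cdot$(adjust) — more precisely apply Hölder with exponents $1/r$ and $1/(1-r)$ to $\int_X\abs{f}^{pr}d\mu=\int_X(\abs{f}^{p}w)^r w^{-r}d\mu\leq(\int_X\abs{f}^pd\nu)^r(\int_X w^{-r/(1-r)}d\mu)^{1-r}$, and similarly for $\nabla_H f$; this bounds the $W^{1,pr}(X,\mu)$-norm by a constant times the $W^{1,p}(X,\nu)$-norm on smooth cylindrical functions, and passing to the closure (noting that a $W^{1,p}(X,\nu)$-Cauchy sequence is then $W^{1,pr}(X,\mu)$-Cauchy and the two limits agree $\nu$-a.e., hence $\mu$-a.e. since $w>0$) gives the embedding.

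For \eqref{Banach Saks}, I would invoke \eqref{rifelssivita' in nu}: the bounded sequence $(\varphi_n)$ in the reflexive space $W^{1,p}(X,\nu)$ has a weakly convergent subsequence, say to $\psi\in W^{1,p}(X,\nu)$; by Mazur's lemma a sequence of convex combinations converges strongly to $\psi$, hence (up to a further subsequence) pointwise $\nu$-a.e., and since the original $\varphi_n\to\varphi$ pointwise $\nu$-a.e. one identifies $\psi=\varphi$, so $\varphi\in W^{1,p}(X,\nu)$. Finally \eqref{prodotto funzioni in W} is the Leibniz rule: for smooth cylindrical $\varphi,\psi$ the identity $\nabla_H(\varphi\psi)=\varphi\nabla_H\psi+\psi\nabla_H\varphi$ holds pointwise, and one checks via Hölder (using $\frac1r\geq\frac1p+\frac1q$, i.e. $r\leq\frac{pq}{p+q}$, together with the hypothesis $\frac{pq}{p+q}\geq\frac{t}{t-s'}$ so that all the spaces in play are defined) that $\varphi\psi$ and the right-hand side lie in the appropriate $\elle^r(X,\nu)$ spaces; then one approximates general $\varphi\in W^{1,p}(X,\nu)$ and $\psi\in W^{1,q}(X,\nu)$ by smooth cylindrical functions, controls the products using the mixed Hölder bounds, and passes to the limit to conclude $\varphi\psi\in W^{1,r}(X,\nu)$ with the stated gradient.

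The main obstacle is item \eqref{prodotto funzioni in W}: one must be careful that, when approximating $\varphi$ and $\psi$ separately, the products $\varphi_n\psi_n$ converge to $\varphi\psi$ in $\elle^r(X,\nu)$ \emph{and} the gradients $\varphi_n\nabla_H\psi_n+\psi_n\nabla_H\varphi_n$ converge in $\elle^r(X,\nu;H)$ — this requires uniform integrability–type control coming precisely from the strict inequality in the exponent relation, and possibly a truncation argument to first reduce to bounded $\varphi,\psi$. The other items are routine once \eqref{rifelssivita' in nu} and the Hölder estimates of Proposition \ref{coincidenza gradienti} are in hand.
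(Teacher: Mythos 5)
Your proposal is correct and follows essentially the same route as the paper: the isometric embedding $f\mapsto(f,\nabla_H f)$ into the reflexive product space for (1), the Hölder estimates of Proposition \ref{coincidenza gradienti} for the embeddings (2) and (3), a convexity/weak-compactness argument for (4), and the approximated Leibniz rule for (5), which the paper dismisses as ``standard calculations.'' The only cosmetic difference is in (4), where the paper invokes the Banach--Saks property (following Bogachev's Lemma 5.4.4) while you use reflexivity plus Mazur's lemma --- two interchangeable ways of extracting strongly convergent convex combinations, after which the identification of the limit via pointwise a.e.\ convergence is the same.
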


\begin{proof}
\begin{enumerate}
\item  It is easily seen that $\elle^p(X,\nu)\times\elle^p(X,\nu;H)$ is a reflexive Banach space when endowed with the norm
\[\norm{(f,\Phi)}=\norm{f}_{\elle^p(X,\nu)}+\norm{\Phi}_{\elle^p(X,\nu;H)}.\]
This is due to the fact that the weakly closed and norm bounded sets of both $\elle^p(X,\nu)$ and $\elle^p(X,\nu;H)$ are weakly compact (see \cite[Corollary IV.1.2]{DU77}). Thus the set 
\[\set{(f,\Phi)\in\elle^p(X,\nu)\times\elle^p(X,\nu;H)\tc \norm{(f,\Phi)}\leq 1}\]
is weakly compact and so $\elle^p(X,\nu)\times\elle^p(X,\nu;H)$ is reflexive.
The operator $T:W^{1,p}(X,\nu)\ra \elle^p(X,\nu)\times\elle^p(X,\nu;H)$ defined as
\[T(f)=(f,\nabla_H f),\] 
is an isometric embedding, which implies that the range of $T$ is closed in $\elle^p(X,\nu)\times\elle^p(X,\nu;H)$. Thus $T(W^{1,p}(X,\nu))$ is reflexive, being a closed subspace of a reflexive space. So $W^{1,p}(X,\nu)$ is reflexive too, being isometric to a reflexive space.

\item Let $f\in W^{1,q}(X,\mu)$. Using the H\"older inequality we get
\begin{gather*}
\int_X\abs{f}^pd\nu=\int_X\abs{f}^pwd\mu\leq
\pa{\int_X\abs{f}^{ps'}d\mu}^{\frac{1}{s'}}
\pa{\int_Xw^sd\mu}^{\frac{1}{s}},
\end{gather*}
and the right hand side is finite whenever $ps'< q$. Using Proposition \ref{coincidenza gradienti}, the same inequality holds for $\nabla_H f$. Thus the statement follows.

\item Let $f\in W^{1,p}(X,\nu)$. Using the H\"older inequality we get
\begin{gather*}
\int_X\abs{f}^{pr}d\mu=\int_X\frac{\abs{f}^{pr}}{w}d\nu\leq\pa{\int_X\abs{f}^pd\nu}^r\pa{\int_Xw^{\frac{r}{r-1}}d\mu}^{1-r}.
\end{gather*}
Using the same argument in Proposition \ref{coincidenza gradienti} it is possible to prove the same inequality for $\nabla_H f$. Thus the statement follows.

\item The statement is a consequence of the Banach--Saks property, i.e. every bounded sequence $(\varphi_n)_{n\in\N}\subseteq\elle^p(X,\nu;H)$ has a subsequence $(\varphi_{n_k})_{k\in\N}$ such that the sequence
\[\pa{\frac{\varphi_{n_1}+\cdots+\varphi_{n_k}}{k}}_{k\in\N}\]
strongly converges in $\elle^p(X,\nu;H)$. The proof is the same of \cite[Lemma 5.4.4]{Bog98} with obvious adjustments. 

\item Standard calculations.
\end{enumerate}
\end{proof}

\begin{pro}\label{C1 func}
Let $p\geq\frac{t}{t-s'}$ and $\theta\in\con^1_b(\R)$. If $\varphi\in W^{1,p}(X,\nu)$, then $\theta\circ\varphi\in W^{1,p}(X,\nu)$ and
\[\nabla_H(\theta\circ\varphi)=(\theta'\circ\varphi)\nabla_H\varphi\qquad\text{ $\nu$-a.e.}\]
\end{pro}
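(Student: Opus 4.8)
The plan is to prove the statement first for $\theta\in\con^2_b(\R)$ (or even $\con^\infty_b(\R)$) by a direct approximation argument, and then remove the extra regularity on $\theta$ by a second approximation, this time in the $\con^1_b$-parameter. For the first step, I would take a sequence $(\varphi_k)_{k\in\N}\subseteq\fcon^\infty_b(X)$ with $\varphi_k\to\varphi$ in $\elle^p(X,\nu)$ and $\nabla_H\varphi_k\to\nabla_H\varphi$ in $\elle^p(X,\nu;H)$, which exists by Definition \ref{Weightes Sobolev space definition}. Passing to a subsequence, we may assume $\varphi_k\to\varphi$ pointwise $\nu$-a.e. For each $k$, the chain rule on $\fcon^\infty_b$ gives $\theta\circ\varphi_k\in\fcon^\infty_b(X)$ (since $\theta\in\con^2_b$ and $\varphi_k$ is a smooth bounded cylindrical function) with $\nabla_H(\theta\circ\varphi_k)=(\theta'\circ\varphi_k)\nabla_H\varphi_k$. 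I then want to pass to the limit: $\theta\circ\varphi_k\to\theta\circ\varphi$ in $\elle^p(X,\nu)$ follows from $\abs{\theta\circ\varphi_k-\theta\circ\varphi}\leq\norm{\theta'}_\infty\abs{\varphi_k-\varphi}$, and $(\theta'\circ\varphi_k)\nabla_H\varphi_k\to(\theta'\circ\varphi)\nabla_H\varphi$ in $\elle^p(X,\nu;H)$ by splitting
\[
(\theta'\circ\varphi_k)\nabla_H\varphi_k-(\theta'\circ\varphi)\nabla_H\varphi=(\theta'\circ\varphi_k)(\nabla_H\varphi_k-\nabla_H\varphi)+\pa{(\theta'\circ\varphi_k)-(\theta'\circ\varphi)}\nabla_H\varphi,
\]
where the first term is controlled in $\elle^p$-norm by $\norm{\theta'}_\infty\norm{\nabla_H\varphi_k-\nabla_H\varphi}_{\elle^p(X,\nu;H)}\to0$, and the second goes to $0$ by dominated convergence (the integrand is bounded by $(2\norm{\theta'}_\infty)^p\abs{\nabla_H\varphi}_H^p\in\elle^1(X,\nu)$ and tends to $0$ pointwise $\nu$-a.e. along the subsequence, using continuity of $\theta'$). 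Since $\nabla_H$ is closed on $W^{1,p}(X,\nu)$, this yields $\theta\circ\varphi\in W^{1,p}(X,\nu)$ with the claimed formula.

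For the second step, given a general $\theta\in\con^1_b(\R)$, I would mollify: choose $\theta_j\in\con^\infty_b(\R)$ with $\theta_j\to\theta$ and $\theta_j'\to\theta'$ uniformly on $\R$ (e.g.\ convolution with a standard mollifier, which preserves uniform bounds). Applying Step 1 to each $\theta_j$ gives $\theta_j\circ\varphi\in W^{1,p}(X,\nu)$ with $\nabla_H(\theta_j\circ\varphi)=(\theta_j'\circ\varphi)\nabla_H\varphi$. Then $\theta_j\circ\varphi\to\theta\circ\varphi$ in $\elle^p(X,\nu)$ since $\sup_{\R}\abs{\theta_j-\theta}\to0$ and $\nu$ is finite, and $(\theta_j'\circ\varphi)\nabla_H\varphi\to(\theta'\circ\varphi)\nabla_H\varphi$ in $\elle^p(X,\nu;H)$ because $\abs{(\theta_j'\circ\varphi)-(\theta'\circ\varphi)}\leq\sup_{\R}\abs{\theta_j'-\theta'}\to0$ and $\nabla_H\varphi\in\elle^p(X,\nu;H)$. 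Closedness of $\nabla_H$ again gives the conclusion.

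The main obstacle is really just bookkeeping: making sure all the limit passages happen in the correct $\elle^p(X,\nu)$-spaces and that the dominating functions are genuinely $\nu$-integrable — here the key point is that $\nabla_H\varphi\in\elle^p(X,\nu;H)$ by hypothesis and $\theta,\theta'$ are \emph{bounded}, so no interplay with the weight $w$ or the exponent condition $p\geq\frac{t}{t-s'}$ is needed beyond what is already built into the definition of $W^{1,p}(X,\nu)$. One should also record that the chain rule on $\fcon^\infty_b(X)$ (finite-dimensional calculus applied to the cylindrical representation) is what makes Step 1 start; this is the only place where the smoothness of $\theta$ in Step 1 is used, which is precisely why the mollification in Step 2 is required. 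An alternative, slightly slicker route would be to prove directly for $\con^1_b$ by approximating $\varphi$ and invoking the closedness, but then the chain rule for $\theta\circ\varphi_k$ with merely $\con^1$ data on cylindrical functions still holds (classical finite-dimensional chain rule), so in fact Step 2 could be absorbed into Step 1 — I would state it in the two-step form only if clarity demands it.
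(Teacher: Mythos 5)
Your argument is correct and follows essentially the same route as the paper: approximate $\varphi$ by smooth cylindrical functions, apply the finite-dimensional chain rule at the cylindrical level, and pass to the limit using the closedness of $\nabla_H$ (the paper concludes membership in $W^{1,p}(X,\nu)$ via the Banach--Saks-type statement of Proposition \ref{proprerties of sobolev space}\eqref{Banach Saks} rather than your direct norm-convergence of the gradients, but this is a cosmetic difference; your explicit mollification of $\theta$ is in fact more careful than the paper, which simply asserts that $\theta\circ\varphi_n\in\fcon^1_b(X)$ lies in the domain with the natural gradient). One small inaccuracy in Step 2: a function $\theta\in\con^1_b(\R)$ need not have a \emph{uniformly} continuous derivative, so convolution does not in general give $\theta_j'\to\theta'$ uniformly on $\R$; however, $\theta_j'\to\theta'$ pointwise with $\abs{\theta_j'}\leq\norm{\theta'}_\infty$, and the same dominated-convergence argument you already use in Step 1 (domination by $(2\norm{\theta'}_\infty)^p\abs{\nabla_H\varphi}_H^p\in\elle^1(X,\nu)$) yields $(\theta_j'\circ\varphi)\nabla_H\varphi\to(\theta'\circ\varphi)\nabla_H\varphi$ in $\elle^p(X,\nu;H)$, so the proof goes through unchanged.
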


\begin{proof}
Let $(\varphi_n)_{n\in\N}\subseteq\fcon_b^\infty(X)$ be such that $\varphi_n$ converges in $W^{1,p}(X,\nu)$ and pointwise $\nu$-a.e. to $\varphi$. Applying (\ref{Banach Saks}) of Proposition \ref{proprerties of sobolev space} to the sequence $\theta\circ\varphi_n$ we get $\theta\circ\varphi\in W^{1,p}(X,\nu)$. Furthermore we know that, for every $n\in\N$, $\theta\circ \varphi_n\in\fcon_b^1(X)$ and
\[\nabla_H(\theta\circ\varphi_n)=(\theta'\circ\varphi_n)\nabla_H\varphi_n.\]
By Proposition \ref{closure gradient} and the fact that $\theta\in\con^1_b(\R)$, we can let $n\ra+\infty$ and get the statement.
\end{proof}

\begin{pro}\label{derivate of modulus}
Let $p\geq\frac{t}{t-s'}$ and $u\in W^{1,p}(X,\nu)$. Then $\abs{u}\in W^{1,p}(X,\nu)$ and
\[\nabla_H\abs{u}=\sign(u)\nabla_H u.\]
Furthermore $\nabla_H u(x)=0$  $\nu$-a.e. in $\set{y\in X\tc u(y)=0}$.
\end{pro}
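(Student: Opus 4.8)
The plan is to approximate $t\mapsto|t|$ by a sequence of functions in $\con^1_b(\R)$, apply the chain rule of Proposition \ref{C1 func}, and pass to the limit using that $\nabla_H$ is a closed operator on $W^{1,p}(X,\nu)$ (Proposition \ref{closure gradient} and Definition \ref{Weightes Sobolev space definition}). Concretely, fix an even $\chi_n\in C^\infty_c(\R)$ with $0\le\chi_n\le1$, $\chi_n\equiv1$ on $[-n,n]$ and $\supp\chi_n\subseteq[-2n,2n]$, and set
\[\theta_n(t):=\int_0^t\frac{s}{\sqrt{s^2+n^{-2}}}\,\chi_n(s)\,ds.\]
Then $\theta_n\in\con^1_b(\R)$ (the integrand is bounded by $1$ and compactly supported), $\theta_n$ is even with $0\le\theta_n(t)\le|t|$, $\theta_n'(t)=\frac{t}{\sqrt{t^2+n^{-2}}}\chi_n(t)$ so $|\theta_n'|\le1$, and as $n\to+\infty$ one has $\theta_n(t)\to|t|$ and $\theta_n'(t)\to\sign(t)$ for every $t\in\R$ (with the convention $\sign(0)=0$). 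By Proposition \ref{C1 func}, $\theta_n\circ u\in W^{1,p}(X,\nu)$ and $\nabla_H(\theta_n\circ u)=(\theta_n'\circ u)\nabla_H u$ $\nu$-a.e. Since $|\theta_n\circ u|\le|u|\in\elle^p(X,\nu)$ and $\theta_n\circ u\to|u|$ pointwise $\nu$-a.e., dominated convergence gives $\theta_n\circ u\to|u|$ in $\elle^p(X,\nu)$ (recall $\nu(X)=\int_X w\,d\mu<+\infty$ since $w\in\elle^s(X,\mu)$); since $|\nabla_H(\theta_n\circ u)|_H=|\theta_n'\circ u|\,|\nabla_H u|_H\le|\nabla_H u|_H\in\elle^p(X,\nu;H)$ and $(\theta_n'\circ u)\nabla_H u\to\sign(u)\nabla_H u$ pointwise $\nu$-a.e., dominated convergence gives $\nabla_H(\theta_n\circ u)\to\sign(u)\nabla_H u$ in $\elle^p(X,\nu;H)$. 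Closedness of $\nabla_H$ then yields $|u|\in W^{1,p}(X,\nu)$ and $\nabla_H|u|=\sign(u)\nabla_H u$.

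For the last assertion I would run the same scheme with a second family $\sigma_\varepsilon\in\con^1_b(\R)$, built by the same truncation-and-integration recipe from a continuous $\gamma_\varepsilon$ with $0\le\gamma_\varepsilon\le1$, $\gamma_\varepsilon\equiv0$ on $[-\varepsilon,\varepsilon]$ and $\gamma_\varepsilon\equiv1$ outside $[-2\varepsilon,2\varepsilon]$, so that $|\sigma_\varepsilon(t)|\le|t|$, $|\sigma_\varepsilon'|\le1$, and $\sigma_\varepsilon(t)\to t$, $\sigma_\varepsilon'(t)\to\mathbf 1_{\set{t\neq0}}(t)$ pointwise as $\varepsilon\to0^+$. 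Exactly as above, Proposition \ref{C1 func} and dominated convergence give $\sigma_\varepsilon\circ u\to u$ in $\elle^p(X,\nu)$ and $\nabla_H(\sigma_\varepsilon\circ u)=(\sigma_\varepsilon'\circ u)\nabla_H u\to\mathbf 1_{\set{u\neq0}}\nabla_H u$ in $\elle^p(X,\nu;H)$; by closedness $\nabla_H u=\mathbf 1_{\set{u\neq0}}\nabla_H u$ $\nu$-a.e., i.e. $\nabla_H u=0$ $\nu$-a.e. on $\set{y\in X\tc u(y)=0}$.

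The main obstacle is the mismatch between the natural approximants of $|t|$ (such as $\sqrt{t^2+\varepsilon^2}-\varepsilon$) and the hypothesis $\theta\in\con^1_b(\R)$ in Proposition \ref{C1 func}: an unbounded $\theta$ is not allowed, so the approximating functions must be cut off far from the origin as well, which is why $\chi_n$ is incorporated into the construction and the two requirements $\theta_n\to|\cdot|$ and $\theta_n'\to\sign$ are produced simultaneously by a single integral formula (the same remark forces the composition $\theta_n\circ u$ rather than $\theta_n\circ\varphi_n$ for cylindrical $\varphi_n$, so that the continuity of $\theta_n'$ keeps the dominated-convergence step clean on $\set{u=0}$). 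A secondary point worth noting is that the ``furthermore'' cannot be extracted algebraically from $\nabla_H|u|=\sign(u)\nabla_H u$ — substituting $|u|$, $-u$, or $u^{\pm}$ produces only tautologies on $\set{u=0}$ — so a genuinely different family, flat near $0$, is required.
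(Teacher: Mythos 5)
Your proof is correct. The paper gives no details for this proposition — it only cites \cite{DPL14} and notes that Lemma \ref{int by part} replaces the Gaussian integration-by-parts formula — and your argument (approximation of $|\cdot|$, and then of the identity, by $\con^1_b(\R)$ truncations, the chain rule of Proposition \ref{C1 func}, dominated convergence in the finite measure $\nu$, and closedness of $\nabla_H$ from Definition \ref{Weightes Sobolev space definition}) is precisely the standard proof that citation points to; in particular, your observation that the ``furthermore'' requires a second family of approximants flat near the origin, rather than an algebraic manipulation of $\nabla_H|u|=\sign(u)\nabla_H u$, is the right one.
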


\begin{proof}
The proof is the same as in \cite[Lemma 2.7]{DPL14}, we just use Lemma \ref{int by part} instead of the classical integration by parts formula for the Gaussian measure (\ref{Gaussian int by part}).
\end{proof}

\section{Divergence operator}\label{Divergence operator}

We start this section by recalling the definition of divergence, see \cite[Section 5.8]{Bog98}. For every measurable map $\Phi:X\ra X$ and for every $f\in\fcon^\infty_b(X)$ we define
\begin{gather}\label{divergenza su dominio}
\partial_\Phi f(x)=\lim_{t\ra 0}\frac{f(x+t\Phi(x))-f(x)}{t},\qquad\qquad x\in X.
\end{gather}

\begin{defn}
Let $\Phi\in \elle^1(X,\nu;X)$ be a vector field. We say that $\Phi$ admits \emph{divergence} if there exists a function $g\in\elle^1(X,\nu)$ such that
\begin{gather}\label{condizione di divergenza in X}
\int_X\partial_{\Phi}fd\nu=-\int_X fgd\nu\qquad \text{for every }f\in\fcon_b^\infty(X);
\end{gather}
where $(\partial_{\Phi}f)(x)$ is defined in (\ref{divergenza su dominio}). If such a function exists then we set 
\[\diver_\nu \Phi:=g.\] 
Observe that, when $\diver_\nu \Phi$ exists, it is unique by the density of $\fcon^\infty_b(X)$. We denote by $D(\diver_\nu)$ the domain of $\diver_\nu$ in $\elle^1(X,\nu;X)$. Lastly observe that if $\Phi\in\elle^1(X,\nu;H)$, then $(\partial_{\Phi}f)(x)=\gen{\nabla_Hf(x),\Phi(x)}_H$. In this case (\ref{condizione di divergenza in X}) becomes
\begin{gather}\label{condizione di divergenza in H}
\int_X\gen{\nabla_Hf(x),\Phi(x)}_Hd\nu=-\int_X fgd\nu\qquad \text{for every }f\in\fcon_b^\infty(X).
\end{gather}
\end{defn}
We are now interested in conditions ensuring the non emptyness of $D(\diver_\nu)$ and $\elle^p$-integrability of $\diver_\nu v$. We start by studying the case of a vector field in $W^{1,2}(X,\nu;H)$.

\begin{pro}\label{Conti per divergenza}
Assume Hypothesis \ref{ipotesi1} holds and that $\log w\in W^{2,t}(X,\mu)$, where $t$ is the exponent fixed in Hypothesis \ref{ipotesi1}. For every $f,g\in\fcon^{\infty}_b(X)$ and $h,k\in H$
\begin{gather*}
\int_X\pa{f\hat{h}-f\partial_h\log w-\partial_h f}\pa{g\hat{k}-g\partial_k\log w-\partial_kg}d\nu=\\
=\gen{h,k}_H\int_Xfgd\nu-\int_Xfg\partial_h\partial_k\log wd\nu+\int_X\partial_kf\partial_h gd\nu.
\end{gather*}
\end{pro}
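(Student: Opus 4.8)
The plan is to prove this identity by a direct computation, expanding the left-hand side into a sum of nine integrals and reorganizing them. First I would write $A_h := f\hat h - f\partial_h\log w - \partial_h f$ and $B_k$ analogously, so the integrand on the left is $A_h B_k$. Expanding the product gives nine terms; the key structural observation is that each of these can be integrated by parts against $\nu$ using the weighted formula \eqref{int} from Lemma \ref{int by part} (which holds on $\fcon^\infty_b(X)$, and the products appearing are again in $\fcon^\infty_b(X)$). The natural grouping is to notice that $A_h = -\big(\partial_h f - f(\hat h - \partial_h\log w)\big)$, and by \eqref{int} applied to the function $fg'$-type products, integration against $A_h\, d\nu$ behaves like ``integration by parts in the direction $h$'' up to the correction term $-f\partial_h\log w$ already being absorbed. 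Concretely, I would prove the auxiliary identity
\[
\int_X A_h\, G\, d\nu = -\int_X \partial_h G\, d\nu \qquad\text{for } G\in\fcon^\infty_b(X),
\]
which is just a restatement of \eqref{int} with $G$ in place of $f$ and the roles rearranged; this is the workhorse.

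Next I would apply this with $G = g B_k$, noting $gB_k = g^2\hat k - g^2 \partial_k\log w - g\,\partial_k g \in \fcon^\infty_b(X)$ is not quite what we want — rather we keep $B_k$ symbolic and write $A_hB_k = A_h\cdot B_k$ and integrate $A_h$ by parts against the ``density'' $B_k$. So set $G$ to be each of the three pieces of $B_k$ in turn: $g\hat k$, $-g\partial_k\log w$, $-\partial_k g$. Then
\[
\int_X A_h B_k\, d\nu = -\int_X \partial_h\!\big(g\hat k\big)\,d\nu + \int_X \partial_h\!\big(g\,\partial_k\log w\big)\,d\nu + \int_X \partial_h\!\big(\partial_k g\big)\,d\nu.
\]
Now I expand each derivative by the Leibniz rule: $\partial_h(g\hat k) = (\partial_h g)\hat k + g\,\partial_h\hat k$, and since $\hat k$ is (a finite combination of) elements of $X^*$, in fact $\partial_h\hat k = \gen{h,k}_H$ is constant (this is the standard identity $\partial_h\hat k = \langle h,k\rangle_H$ for the Gaussian setting, which follows from $R_\mu\hat k = k$ and linearity of $\hat k$). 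Similarly $\partial_h(g\,\partial_k\log w) = (\partial_h g)(\partial_k\log w) + g\,\partial_h\partial_k\log w$, and $\partial_h\partial_k g = \partial_k\partial_h g$ by symmetry of second derivatives for $\fcon^\infty$ functions. Collecting: the $\gen{h,k}_H$ term gives $\gen{h,k}_H\int fg\,d\nu$ after one more integration by parts on $\int (\partial_h g)\hat k\, d\nu$ — wait, more carefully, $-\int(\partial_h g)\hat k\,d\nu$ must be combined with pieces coming from $B_h$-symmetry; so I would instead symmetrize at the end.

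The cleanest route, and the one I would actually carry out: substitute $G = gB_k$ directly is wrong because $B_k$ is a derivative expression, so instead I integrate by parts in $h$ moving the derivative off $A_h$'s $\partial_h f$ piece onto $gB_k$, i.e. start from $\int_X A_hB_k\,d\nu = \int_X(f\hat h - f\partial_h\log w)B_k\,d\nu - \int_X (\partial_h f) B_k\,d\nu$, and in the last integral write $(\partial_h f)B_k$ — but $B_k$ is not a pure function times... it is: $B_k = g\hat k - g\partial_k\log w - \partial_k g\in\fcon^\infty_b$. So $\int(\partial_h f)B_k\,d\nu$: apply \eqref{int} to the product $fB_k$? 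No — apply the product rule $\partial_h(fB_k) = (\partial_h f)B_k + f\partial_h B_k$ and then \eqref{int} to $fB_k$:
\[
\int_X(\partial_h f)B_k\,d\nu = \int_X f(\hat h - \partial_h\log w)B_k\,d\nu - \int_X f\,\partial_h B_k\,d\nu.
\]
Substituting back, the first two terms cancel exactly against $\int_X(f\hat h - f\partial_h\log w)B_k\,d\nu$, leaving
\[
\int_X A_hB_k\,d\nu = \int_X f\,\partial_h B_k\,d\nu = \int_X f\,\partial_h\big(g\hat k - g\partial_k\log w - \partial_k g\big)\,d\nu.
\]
Now expand by Leibniz: $\partial_h(g\hat k) = (\partial_h g)\hat k + g\gen{h,k}_H$, $\partial_h(g\partial_k\log w) = (\partial_h g)(\partial_k\log w) + g\,\partial_h\partial_k\log w$, $\partial_h\partial_k g = \partial_k\partial_h g$. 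Therefore
\[
\int_X A_hB_k\,d\nu = \gen{h,k}_H\!\int_X fg\,d\nu - \int_X fg\,\partial_h\partial_k\log w\,d\nu + \int_X f\Big((\partial_h g)(\hat k - \partial_k\log w) - \partial_k\partial_h g\Big)d\nu.
\]
For the remaining term, apply \eqref{int} once more in direction $k$: $\int_X f(\partial_h g)(\hat k - \partial_k\log w)\,d\nu = \int_X \partial_k\big(f\,\partial_h g\big)\,d\nu = \int_X (\partial_k f)(\partial_h g)\,d\nu + \int_X f\,\partial_k\partial_h g\,d\nu$, and the last piece cancels $-\int_X f\,\partial_k\partial_h g\,d\nu$. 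This yields exactly the claimed identity. The main obstacle — really the only point requiring care — is justifying the identity $\partial_h\hat k = \gen{h,k}_H$ and making sure every product formed stays in $\fcon^\infty_b(X)$ so that \eqref{int} and the symmetry $\partial_h\partial_k = \partial_k\partial_h$ apply; the hypothesis $\log w\in W^{2,t}$ guarantees $\partial_h\partial_k\log w$ makes sense and the resulting integrals are finite (via Hölder as in Proposition \ref{closure gradient}), but within $\fcon^\infty_b$ all steps are elementary.
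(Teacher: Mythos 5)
Your final argument is correct and is essentially the paper's own proof: both reduce $\int_X A_hB_k\,d\nu$ to $\int_X f\,\partial_h B_k\,d\nu$ via the weighted integration-by-parts formula \eqref{int}, then expand by Leibniz using $\partial_h\hat k=\gen{h,k}_H$, and finish with one more application of \eqref{int} in the direction $k$ together with the symmetry $\partial_h\partial_k g=\partial_k\partial_h g$. The only blemish is the discarded auxiliary identity $\int_X A_h G\,d\nu=-\int_X\partial_h G\,d\nu$, which is missing a factor of $f$ on the right-hand side, but you correct this in the version you actually carry out, so the proof stands.
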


\begin{proof}
By the integration by parts formula (Lemma \ref{int by part}) we get
\begin{gather*}
\int_X\pa{f\hat{h}-f\partial_h\log w-\partial_h f}\pa{g\hat{k}-g\partial_k\log w-\partial_kg}d\nu=\\
=\int_Xf\partial_h\pa{g\hat{k}}d\nu-\int_Xf\partial_h\pa{g\partial_k\log w}d\nu-\int_Xf\partial_h\partial_k gd\nu.
\end{gather*}
Recalling that $\partial_h\hat{k}=\gen{h,k}_H$ we get
\begin{gather*}
\int_X\pa{f\hat{h}-f\partial_h\log w-\partial_h f}\pa{g\hat{k}-g\partial_k\log w-\partial_kg}d\nu=\\
=\gen{h,k}_H\int_Xfgd\nu-\int_Xfg\partial_h\partial_k\log wd\nu+\int_X\partial_kf\partial_h gd\nu.
\end{gather*}
\end{proof}

\begin{pro}\label{divergence for W12}
Let $\log w\in W^{2,t}(X,\mu)$, for some $t\geq 2s'$, and assume that there exists $C\geq 0$ such that for every $(\xi_i)_{i\in\N}\in\ell_2$ we have
\begin{gather}\label{condition L2 divergence}
\sum_{i=1}^{+\infty}\sum_{j=1}^{+\infty}(\delta_{ij}-\partial_i\partial_j\log w(x))\xi_i\xi_j\leq C\sum_{i=1}^{+\infty}\xi^2_i\qquad\mu\text{-a.e.}
\end{gather}
Then every field $\Phi\in W^{1,2}(X,\nu;H)$ has a divergence $\diver_\nu \Phi\in L^2(X,\nu)$ and for every $f\in W^{1,2}(X,\nu)$, the following equality holds:
\[\int_X\gen{\nabla_H f(x),\Phi(x)}_Hd\nu(x)=-\int_X f(x)\diver_\nu \Phi(x)d\nu(x).\]
Furthermore, if $\varphi_n=\gen{\Phi,e_n}_H$ for every $n\in\N$ where $(e_n)_{n\in\N}$ is an orthonormal basis of $H$, then
\[\diver_\nu \Phi(x)=\sum_{n=1}^{+\infty}\pa{\partial_n\varphi_n(x)+\varphi_n(x)\partial_n\log w(x)-\varphi_n(x)\hat{e}_n},\]
and $\norm{\diver_\nu \Phi}_{\elle^2(X,\nu)}\leq \max\{\sqrt{C},1\}\norm{\Phi}_{W^{1,2}(X,\nu;H)}$.
\end{pro}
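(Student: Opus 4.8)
The plan is to first establish the formula and the estimate on smooth cylindrical fields, and then pass to the limit using density of $\fcon_b^\infty(X)$ in $W^{1,2}(X,\nu;H)$. So let $\Phi$ be an $H$-valued smooth cylindrical field, and write $\Phi = \sum_{n=1}^N \varphi_n e_n$ with $\varphi_n \in \fcon_b^\infty(X)$ (a finite sum, since $\Phi$ depends on finitely many coordinates and lies in a finite-dimensional subspace of $H$). For such $\Phi$, set $g := \sum_{n=1}^N(\partial_n\varphi_n + \varphi_n\partial_n\log w - \varphi_n\hat e_n)$; this is the natural candidate for $\diver_\nu\Phi$. First I would verify directly, using Lemma \ref{int by part} applied to each product $f\varphi_n$ (note $f\varphi_n \in \fcon_b^\infty(X)$), that $\int_X \gen{\nabla_H f,\Phi}_H\,d\nu = \sum_n \int_X \varphi_n\partial_n f\,d\nu = -\int_X f g\,d\nu$ for every $f\in\fcon_b^\infty(X)$, which is exactly the defining relation \eqref{condizione di divergenza in H}. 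The only point to check is integrability of the various terms, which follows from Hypothesis \ref{ipotesi1}(2) together with $t\geq 2s'$ (so $\partial_n\log w\in\elle^t\subseteq\elle^{2s'}(X,\mu)$ and $w\in\elle^s(X,\mu)$ give $\varphi_n\partial_n\log w\in\elle^2(X,\nu)$).

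The key quantitative step is the bound $\norm{g}_{\elle^2(X,\nu)}\leq\max\{\sqrt C,1\}\,\norm{\Phi}_{W^{1,2}(X,\nu;H)}$. Here I would expand $\norm{g}_{\elle^2(X,\nu)}^2 = \int_X\big(\sum_n(\varphi_n\hat e_n - \varphi_n\partial_n\log w - \partial_n\varphi_n)\big)^2 d\nu$ and apply Proposition \ref{Conti per divergenza} with $f = \varphi_i$, $g = \varphi_j$, $h = e_i$, $k = e_j$, summing over $i,j = 1,\dots,N$. Using $\gen{e_i,e_j}_H = \delta_{ij}$ and $\partial_i\partial_j\log w = \partial_i\partial_j\log w$, this yields
\[
\norm{g}_{\elle^2(X,\nu)}^2 = \int_X\sum_{i,j}(\delta_{ij}-\partial_i\partial_j\log w)\varphi_i\varphi_j\,d\nu + \int_X\sum_{i,j}\partial_j\varphi_i\,\partial_i\varphi_j\,d\nu.
\]
The first integral is bounded by $C\int_X\sum_i\varphi_i^2\,d\nu = C\norm{\Phi}_{\elle^2(X,\nu;H)}^2$ by hypothesis \eqref{condition L2 divergence} applied pointwise. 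The second integral equals $\int_X\trace\big((\nabla_H\Phi)(\nabla_H\Phi)^*\big)d\nu$ and is dominated by $\int_X\norm{\nabla_H\Phi}_{\mathcal H_2}^2\,d\nu \leq \norm{\Phi}_{W^{1,2}(X,\nu;H)}^2$ (or one can keep it as $\sum_{i,j}\partial_i\varphi_j\partial_j\varphi_i \leq \sum_{i,j}(\partial_i\varphi_j)^2$ via Cauchy--Schwarz on the Hilbert--Schmidt inner product). Combining, $\norm{g}_{\elle^2(X,\nu)}^2 \leq C\norm{\Phi}_{\elle^2}^2 + \norm{\nabla_H\Phi}_{\elle^2}^2 \leq \max\{C,1\}\,\norm{\Phi}_{W^{1,2}(X,\nu;H)}^2$, giving the asserted constant.

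Having the estimate on the dense subspace $\fcon_b^\infty(X)\otimes H$, the closing argument is routine: given $\Phi\in W^{1,2}(X,\nu;H)$, pick cylindrical $\Phi_k\to\Phi$ in $W^{1,2}(X,\nu;H)$; then $g_k := \diver_\nu\Phi_k$ is Cauchy in $\elle^2(X,\nu)$ by linearity and the estimate, hence converges to some $g\in\elle^2(X,\nu)$. Passing to the limit in $\int_X\gen{\nabla_Hf,\Phi_k}_H\,d\nu = -\int_X fg_k\,d\nu$ (using H\"older, with $f\in\fcon_b^\infty(X)$ so $\partial_nf$ and $f$ are bounded) shows $g = \diver_\nu\Phi$ and the integration-by-parts identity for $f\in\fcon_b^\infty(X)$; a further density argument extends it to $f\in W^{1,2}(X,\nu)$, since $\nabla_Hf_\ell\to\nabla_Hf$ and $f_\ell\to f$ in $\elle^2(X,\nu)$ for suitable cylindrical $f_\ell$, and $\Phi$, $g$ are fixed $\elle^2$ functions. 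The series formula for $\diver_\nu\Phi$ follows because the partial sums $\sum_{n=1}^N\varphi_ne_n$ converge to $\Phi$ in $W^{1,2}(X,\nu;H)$ (as $\Phi = \sum_n\gen{\Phi,e_n}_He_n$ with $\sum_n\norm{\gen{\Phi,e_n}_H}_{W^{1,2}}^2$ controlled by $\norm{\Phi}_{W^{1,2}(X,\nu;H)}^2$), so their divergences converge in $\elle^2(X,\nu)$ to $\diver_\nu\Phi$.

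The main obstacle I anticipate is not any single deep step but bookkeeping the integrability at the cylindrical level — making sure every cross term ($\varphi_i\partial_i\log w\cdot\varphi_j\partial_j\log w$, $\varphi_i\partial_i\log w\cdot\hat e_j$, $\varphi_i\hat e_i\cdot\hat e_j$, etc.) is genuinely in $\elle^1(X,\nu)$ using only $w\in\elle^s(X,\mu)$, $\log w\in W^{2,t}(X,\mu)$ with $t\geq 2s'$, and the boundedness of cylindrical functions and their derivatives — and confirming that the formal rearrangement in Proposition \ref{Conti per divergenza} is applied to terms that are all absolutely integrable, so that Fubini/interchange of sum and integral over the finite index set $\{1,\dots,N\}$ is legitimate.
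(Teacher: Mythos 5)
Your proposal is correct and follows essentially the same route as the paper: verify the defining relation and the candidate formula on finite cylindrical fields via Lemma \ref{int by part}, use Proposition \ref{Conti per divergenza} together with \eqref{condition L2 divergence} and the Hilbert--Schmidt bound to obtain $\norm{\diver_\nu\Phi}_{\elle^2(X,\nu)}^2\leq\max\{C,1\}\norm{\Phi}_{W^{1,2}(X,\nu;H)}^2$, then pass to the limit by density. The only nit is the passing identification of $\sum_{i,j}\partial_j\varphi_i\,\partial_i\varphi_j$ with $\trace((\nabla_H\Phi)(\nabla_H\Phi)^*)$ rather than $\trace((\nabla_H\Phi)^2)$, which your own Cauchy--Schwarz remark already corrects.
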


\begin{proof}
Let $\Phi(x)=\sum_{i=1}^n \varphi_i(x)e_i$ with $\varphi_i\in\fcon^{\infty}_b(X)$ for every $i=1,\ldots,n$. Then by the integration by parts formula (Lemma \ref{int by part}) if $f\in \fcon^\infty_b(X)$ we have
\[\int_X\gen{\nabla_H f(x),\Phi(x)}_Hd\nu(x)=-\int_X f(x)\pa{\sum_{i=1}^{n}\pa{\partial_i\varphi_i(x)+\varphi_i(x)\partial_i\log w(x)-\varphi_i(x)\hat{e}_i}}d\nu(x).\]
Put
\[\diver_\nu \Phi(x)=\sum_{i=1}^{n}\pa{\partial_i\varphi_i(x)+\varphi_i(x)\partial_i\log w(x)-\varphi_i(x)\hat{e}_i}.\]
By Proposition \ref{Conti per divergenza}
\begin{gather}
\notag\int_X\pa{\diver_\nu \Phi}^2d\nu=\sum_{i=1}^n\int_X\abs{\varphi_i}^2d\nu+\sum_{i=1}^n\sum_{j=1}^n\int_X\partial_j
\varphi_i\partial_i \varphi_jd\nu-\sum_{i=1}^n\sum_{j=1}^n\int_X \varphi_i\varphi_j\partial_i\partial_j\log wd\nu=\\
\label{formula divergenza}=\int_X\sum_{i=1}^n\sum_{j=1}^n(\delta_{ij}-\partial_i\partial_j\log w)\varphi_i\varphi_jd\nu+\int_X\text{trace}_H((\nabla_H \Phi(x))^2)d\nu(x)\leq\\
\notag\leq C\norm{\Phi}^2_{L^2(X,\nu;H)}+\int_X\norm{\nabla_H \Phi(x)}_{\mathcal{H}_2}^2d\nu(x)\leq\max\set{C,1}\norm{\Phi}^2_{W^{1,2}(X,\nu;H)}.
\end{gather}
Let $(\Phi^n)_{n\in\N}$ be a sequence of fields as above which converges to $\Phi$ in $W^{1,2}(X,\nu;H)$. The sequence $(\diver_\nu \Phi^n)$ is Cauchy in $\elle^2(X,\nu)$ and, hence it converges to some element, which is the candidate to be $\diver_\nu \Phi$. By the integration by parts formula (Lemma \ref{int by part}) it is easily seen that such an element satifies (\ref{condizione di divergenza in H}). Finally, the fact that (\ref{condizione di divergenza in H}) holds also for every $f\in W^{1,2}(X,\nu)$, follows by a standard approximation argument.
\end{proof}
Observe that (\ref{condition L2 divergence}) is satisfied whenever $\nabla_H^2\log w$ is bounded from below $\mu$-a.e., in particular when $\log w$ is a convex function.

The following example show that (\ref{condition L2 divergence}) is not necessary for the domain of the divergence to be not empty.

\begin{eg}\label{Exaple divergence}
Let $X$ be a separable Hilbert space, with norm $\norm{\cdot}_X$ and inner product $(\cdot,\cdot)_X$, endowed with a nondegenerate centered Gaussian measure $\mu$, with covariance $Q$. We fix an orthonormal basis $(v_n)_{n\in\N}$ of $X$ of eigenvectors of $Q$, i.e. $Qv_k=\lambda_k v_k$, and the corresponding orthonormal basis of $H=Q^{\frac{1}{2}}(X)$ is $(e_n:=\sqrt{\lambda_n}v_n)_{n\in\N}$. Recall that $\hat{e}_n(x)=\frac{(x,v_n)_X}{\sqrt{\lambda_n}}$. In this example we take $\lambda_n=4^{-n}$.

Let $w(x):=(x,x)^2_X=\sum_{n=1}^{+\infty}(x,v_n)_X^2=\sum_{n=1}^{+\infty}\lambda_n^{-1}(x,e_n)_X^2$. Easy calculations give
\begin{gather*}
\partial_iw(x)=4(x,x)_X(x,e_i)_X,\quad \partial_i\log w(x)=4\frac{(x,e_i)_X}{(x,x)_X},\\ \partial_i\partial_j\log w(x)=4\pa{\frac{(e_i,e_j)_X}{(x,x)_X}-\frac{2(x,e_i)_X(x,e_j)_X}{(x,x)^2_X}}.
\end{gather*}
We get $w\in W^{1,s}(X,\mu)$ and $\log w\in W^{2,t}(X,\mu)$ for every $t,s>1$. Therefore $W^{1,p}(X,\nu)$ is well defined for every $p>1$ (see Definition \ref{Weightes Sobolev space definition}).

We want to show that $w$ does not satisfy (\ref{condition L2 divergence}). For every $(\xi_i)_{i\in\N}\in\ell_2$ we have
\begin{gather}\label{in esempio divergenza}
\sum_{i=1}^{+\infty}\sum_{j=1}^{+\infty}(\delta_{ij}-\partial_i\partial_j\log w(x))\xi_i\xi_j=\sum_{i=1}^{+\infty}\sum_{j=1}^{+\infty}\pa{\delta_{ij}-4\frac{(e_i,e_j)_X}{(x,x)_X}+8\frac{(x,e_i)_X(x,e_j)_X}{(x,x)_X^2}}\xi_i\xi_j=\\ \notag
=\sum_{i=1}^{+\infty}\sum_{j=1}^{+\infty}\pa{\delta_{ij}-4\delta_{ij}\frac{\sqrt{\lambda_i\lambda_j}}{(x,x)_X}+8\frac{\sqrt{\lambda_i\lambda_j}(x,v_i)_X(x,v_j)_X}{(x,x)_X^2}}\xi_i\xi_j.
\end{gather}
By contradiction assume that $C\geq 0$ exists such that (\ref{condition L2 divergence}) holds. Choosing $\xi_i=(x,v_i)$ for every fixed $x\in X$, we get
\begin{gather*}
\sum_{i=1}^{+\infty}\sum_{j=1}^{+\infty}(\delta_{ij}-\partial_i\partial_j\log w(x))\xi_i\xi_j=\\
=\sum_{i=1}^{+\infty}(x,v_i)_X^2-\frac{4}{(x,x)_X}\sum_{i=1}^{+\infty}\frac{(x,v_i)_X^2}{4^i}+\frac{8}{(x,x)_X^2}\pa{\sum_{i=1}^{+\infty}\frac{(x,v_i)_X^2}{2^i}}^2\leq C\sum_{i=1}^{+\infty}(x,v_i)_X^2\qquad\mu\text{-a.e.}.
\end{gather*}
Let $A=\set{x\in X\tc \norm{x}_X^2<\sqrt{2^{-1}}\sum_{i=1}^{+\infty}4^{-i}(x,v_i)^2_X}$ and $B_r=\set{x\in X\tc \norm{x}_X<r}$ for $0<r<1$. Observe that $A\cap B_r$ are open non-empty subsets of $X$ for every $r\in(0,1)$, this means that $\mu(A\cap B_r)>0$.
Fix $x\in A\cap B_r$ for some $r\in(0,1)$, then
\begin{gather}\notag
(C-1)\sum_{i=1}^{+\infty}(x,v_i)_X^2+\frac{4}{(x,x)_X}\sum_{i=1}^{+\infty}\frac{(x,v_i)_X^2}{4^i}-\frac{8}{(x,x)_X^2}\pa{\sum_{i=1}^{+\infty}\frac{(x,v_i)_X^2}{2^i}}^2\leq\\ \notag
\leq (C-1)\sum_{i=1}^{+\infty}(x,v_i)_X^2+\frac{4}{(x,x)_X}\sum_{i=1}^{+\infty}\frac{(x,v_i)_X^2}{4^i}-\frac{8}{(x,x)_X^2}\pa{\sum_{i=1}^{+\infty}\frac{(x,v_i)_X^2}{4^i}}^2\leq\\ \label{controesempio divergenza 2}
\leq (C-1)\norm{x}^2_X+4-2\norm{x}_X^4\frac{8}{\norm{x}_X^4}=(C-1)\norm{x}^2_X-12
\end{gather}
Observe that if $C>1$ and $x\in A\cap B_{(6(C-1)^{-1})^{\frac{1}{2}}}$, then (\ref{controesempio divergenza 2})$\leq-6$ $\mu$-a.e., a contradiction. Thus our example does not satisfy (\ref{condition L2 divergence}).

Observe that if $\Phi\in W^{1,2\alpha}(X,\mu;H)$ with $\alpha>1$, then $\Phi\in D(\diver_\nu)$. Indeed let $\varphi_n=\gen{\Phi,e_n}_{H}$ for every $n\in\N$, then by(\ref{in esempio divergenza})
\begin{gather*}
\sum_{i=1}^{+\infty}\sum_{j=1}^{+\infty}(\delta_{ij}-\partial_i\partial_j\log w(x))\varphi_i\varphi_j=\\
=\sum_{i=1}^{+\infty}\sum_{j=1}^{+\infty}\pa{\delta_{ij}-4\delta_{ij}\frac{\sqrt{\lambda_i\lambda_j}}{(x,x)_X}+8\frac{\sqrt{\lambda_i\lambda_j}(x,v_i)_X(x,v_j)_X}{(x,x)_X^2}}\varphi_i\varphi_j=\\
=\sum_{i=1}^{+\infty}\varphi_i^2-4\sum_{i=1}^{+\infty}\frac{\lambda_i\varphi_i^2}{(x,x)_X}+\frac{8}{(x,x)_X^2}\pa{\sum_{i=1}^{+\infty}\sqrt{\lambda_i}\varphi_i(x,v_i)_X}^2\leq \\
\leq\abs{\Phi}^2_H+8\frac{\pa{x,\sum_{i=1}^{+\infty}\varphi_ie_i}^2_X}{(x,x)_X^2}
=\abs{\Phi}^2_H+8\frac{\pa{x,\Phi}^2_X}{(x,x)_X^2}
\leq_{(*)} \abs{\Phi}^2_H+8\frac{\norm{\Phi}^2_X}{(x,x)_X}\leq \abs{\Phi}^2_H+8K\frac{\abs{\Phi}^2_H}{(x,x)_X},
\end{gather*}
where $(*)$ follows by the Schwarz inequality and $\norm{h}_X\leq K\abs{h}_H$ for every $h\in H$. We can now integrate both terms of the inequality 
\begin{gather*}
\int_X\sum_{i=1}^{+\infty}\sum_{j=1}^{+\infty}(\delta_{ij}-\partial_i\partial_j\log w(x))\varphi_i\varphi_jd\nu\leq \int_X\pa{\abs{\Phi}^2_H+8K\frac{\abs{\Phi}^2_H}{(x,x)_X}}d\nu= \\
=\int_X\abs{\Phi}^2_H(x,x)_X^2d\mu+8K\int_X\abs{\Phi}^2_H(x,x)_Xd\mu.
\end{gather*}
By the H\"older inequality and Fernique's theorem (see \cite[Theorem 2.8.5]{Bog98}) there exists a constant $\ol{K}$ such that
\begin{gather*}
\int_X\sum_{i=1}^{+\infty}\sum_{j=1}^{+\infty}(\delta_{ij}-\partial_i\partial_j\log w(x))\varphi_i\varphi_jd\nu\leq \ol{K}\norm{\Phi}_{\elle^{2\alpha}(X,\mu;H)}^2
\end{gather*}  
Fix now a field of the following form: $\Phi(x)=\sum_{i=1}^n \varphi_i(x)e_i$ with $\varphi_i\in\fcon^{\infty}_b(X)$ for every $i=1,\ldots,n$. Following the same steps of the proof of Proposition \ref{divergence for W12} we rewrite (\ref{formula divergenza}) as
\begin{gather*}
\int_X\pa{\diver_\nu \Phi}^2d\nu\leq \ol{K}\norm{\Phi}^2_{L^{2\alpha}(X,\mu;H)}+\int_X\norm{\nabla_H \Phi(x)}_{\mathcal{H}}^2d\nu(x)\leq\max\{\ol{K},1\}\norm{\Phi}^2_{W^{1,2\alpha}(X,\mu;H)}.
\end{gather*}
By the same argument used at the end of the proof of Proposition \ref{divergence for W12} and by (\ref{W(mu) in W(nu)}) of Proposition \ref{proprerties of sobolev space} (since it implies that $W^{1,2\alpha}(X,\mu;H)\subseteq\elle^1(X,\nu;X)$ for every $\alpha>1$), we obtain that $W^{1,2\alpha}(X,\mu;H)\subseteq D(\diver_\nu)$ for every $\alpha>1$.
\end{eg}

Since in several pratical examples the vector field we are interested in belongs to $W^{1,q}(X,\mu; H)$ for some $q>1$, the following proposition might be useful.

\begin{pro}\label{divergence with mu}
Let $\Phi\in W^{1,q}(X,\mu;H)$ for some $q\geq \frac{s't}{t-s'}$. Then $\diver_\nu \Phi$ exists and it belongs to $\elle^{p}(X,\nu)$ for every $p\in\left[1,\frac{qt}{s'(q+t)}\right)$. Furthermore
\begin{gather}\label{Formula divergenza}
\diver_\nu \Phi=\diver_\mu \Phi+\gen{\Phi,\nabla_H\log w}_H
\end{gather}
and $\norm{\diver_\nu \Phi}_{\elle^{p}(X,\nu)}\leq C_p\norm{\Phi}_{W^{1,q}(X,\mu;H)}$ for some $C_p>0$. Finally if $\Phi\in W^{1,q}(X,\mu;H)$ for every $q>1$, then $\diver_\nu \Phi\in\elle^{r}(X,\nu)$ for every $r\in[1,t/s')$.
\end{pro}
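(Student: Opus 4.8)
The plan is to guess the answer and verify it: I expect $\diver_\nu\Phi=g$, where
\[g:=\diver_\mu\Phi+\gen{\Phi,\nabla_H\log w}_H\]
and $\diver_\mu$ is the usual Gaussian divergence (the $\elle^2(X,\mu)$-adjoint of $\nabla_H$, see \cite[Section 5.8]{Bog98}). Since the divergence with respect to $\nu$ is unique once it exists, it is enough to show that $g\in\elle^1(X,\nu)$ and that $\int_X\gen{\nabla_Hf,\Phi}_Hd\nu=-\int_X fg\,d\nu$ for every $f\in\fcon^\infty_b(X)$; the formula \eqref{Formula divergenza} then comes for free.

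First I would do the exponent bookkeeping. Because $t>s'$ one has $\frac{s't}{t-s'}>s'$, so $q>s'$, hence $q'<s$ and $W^{1,s}(X,\mu)\hookrightarrow W^{1,q'}(X,\mu)$; in particular $w\in\elle^{q'}(X,\mu)$ and, by H\"older, $\Phi\in\elle^q(X,\mu;H)\subseteq\elle^1(X,\nu;H)$, so $\Phi$ is an admissible field. The Gaussian divergence satisfies $\diver_\mu\Phi\in\elle^q(X,\mu)$ with $\norm{\diver_\mu\Phi}_{\elle^q(X,\mu)}\leq C_q\norm{\Phi}_{W^{1,q}(X,\mu;H)}$, and then H\"older with exponents $q,q'$ gives $\diver_\mu\Phi\in\elle^1(X,\nu)$. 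For the second summand I would use the Cauchy--Schwarz inequality and then H\"older with exponents $q$, $t$ and $a$, where $a^{-1}=1-q^{-1}-t^{-1}$: this yields $\gen{\Phi,\nabla_H\log w}_H\in\elle^1(X,\nu)$ as soon as $a\leq s$, and a short computation shows $a\leq s$ is equivalent to $q\geq\frac{s't}{t-s'}$. This is precisely the point where the lower bound on $q$ and Hypothesis \ref{ipotesi1}(2) enter; so $g\in\elle^1(X,\nu)$.

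Next I would establish the integration by parts formula. Fix $f\in\fcon^\infty_b(X)$; since $fw$ need not be cylindrical, choose smooth cylindrical $w_k\to w$ in $W^{1,s}(X,\mu)$, so that $fw_k\in\fcon^\infty_b(X)$ and the classical Gaussian identity $\int_X\gen{\nabla_H(fw_k),\Phi}_Hd\mu=-\int_X fw_k\,\diver_\mu\Phi\,d\mu$ holds. As $f$ and $\nabla_Hf$ are bounded, $fw_k\to fw$ and $\nabla_H(fw_k)=w_k\nabla_Hf+f\nabla_Hw_k\to w\nabla_Hf+f\nabla_Hw$ in $\elle^s$, hence in $\elle^{q'}$; pairing with $\Phi\in\elle^q(X,\mu;H)$ and with $\diver_\mu\Phi\in\elle^q(X,\mu)$ I can let $k\to+\infty$ and obtain $\int_X\gen{w\nabla_Hf+f\nabla_Hw,\Phi}_Hd\mu=-\int_X fw\,\diver_\mu\Phi\,d\mu$. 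Using $\nabla_Hw=w\nabla_H\log w$ (the identity already exploited in the proof of Lemma \ref{int by part}), the left-hand side equals $\int_X\gen{\nabla_Hf,\Phi}_Hd\nu+\int_X f\gen{\nabla_H\log w,\Phi}_Hd\nu$ and the right-hand side equals $-\int_X f\,\diver_\mu\Phi\,d\nu$; rearranging gives $\int_X\gen{\nabla_Hf,\Phi}_Hd\nu=-\int_X fg\,d\nu$, so $\diver_\nu\Phi$ exists and equals $g$.

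Finally, for the $\elle^p$-estimate I would use $\abs{g}^p\leq 2^{p-1}\big(\abs{\diver_\mu\Phi}^p+\abs{\gen{\Phi,\nabla_H\log w}_H}^p\big)$, integrate against $w\,d\mu$, and bound the first term by H\"older with exponents $(q/p,s)$ (valid for $p\leq q/s'$, using the $\elle^q(X,\mu)$-bound above) and the second by Cauchy--Schwarz followed by H\"older with exponents $(q/p,\,t/p,\,c)$, $c^{-1}=1-p(q+t)/(qt)$, which is admissible exactly when $c\leq s$, i.e. $p\leq\frac{qt}{s'(q+t)}$. Since $\frac{qt}{s'(q+t)}\leq q/s'$, the whole range $p\in[1,\frac{qt}{s'(q+t)})$ is covered, and the same chain of inequalities produces $\norm{\diver_\nu\Phi}_{\elle^p(X,\nu)}\leq C_p\norm{\Phi}_{W^{1,q}(X,\mu;H)}$. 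For the last sentence, when $\Phi\in W^{1,q}(X,\mu;H)$ for every $q>1$ one lets $q\to+\infty$, so that $\frac{qt}{s'(q+t)}\to\frac{t}{s'}$ and each $r<t/s'$ is reached for $q$ large; uniqueness of the divergence makes all these coincide, hence $\diver_\nu\Phi\in\elle^r(X,\nu)$. The step requiring the most care is the limiting argument in the integration by parts: one must track which $\elle$-spaces the approximants $fw_k$ and $\nabla_H(fw_k)$ live in, and it is precisely $q'\leq s$ (equivalently $q\geq s'$, automatic here) that makes the two duality pairings pass to the limit; everything else is routine H\"older bookkeeping, with $q\geq\frac{s't}{t-s'}$ and Hypothesis \ref{ipotesi1}(2) entering exactly to keep $\gen{\Phi,\nabla_H\log w}_H$ integrable against $\nu$.
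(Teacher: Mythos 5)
Your proposal is correct and follows essentially the same route as the paper: identify the candidate $\diver_\mu\Phi+\gen{\Phi,\nabla_H\log w}_H$, check its integrability against $\nu$ by H\"older (the threshold $q\geq\frac{s't}{t-s'}$ entering exactly where you place it), and verify the defining identity by approximating $w$ with smooth cylindrical $w_n$ and applying the Gaussian integration by parts to $fw_n$. The only cosmetic difference is that the paper cancels the $\gen{\nabla_Hw_n,\Phi}_H$ and $w_n\gen{\nabla_H\log w_n,\Phi}_H$ terms at each finite stage before letting $n\to+\infty$, whereas you pass to the limit first and then invoke $\nabla_Hw=w\nabla_H\log w$; both are fine.
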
 

\begin{proof}
By the H\" older inequality $\gen{\Phi,\nabla_H\log w}_H\in\elle^{\frac{qt}{q+t}}(X,\mu)$ and by \cite[Proposition 5.8.8]{Bog98} $\diver_\mu \Phi\in\elle^q(X,\mu)$. By (\ref{W(mu) in W(nu)}) of Proposition \ref{proprerties of sobolev space} both $\diver_\mu \Phi$ and $\gen{\Phi,\nabla_H\log w}_H$ belong to $\elle^{p}(X,\nu)$ for every $p\in\sq{1,\frac{qt}{s'(q+t)}}$. Let $f\in\fcon_b^\infty(X)$ and $(w_n)_{n\in\N}\subseteq\fcon_b^\infty(X)$ be a sequence of positive functions which converges to $w$ pointwise and in $W^{1,s}(X,\mu)$ (so that also $\log w_n$ converges to $\log w$ both pointwise and in $W^{1,t}(X,\mu)$), then
\begin{gather*}
\int_Xf(\diver_\mu \Phi+\gen{\Phi,\nabla_H\log w_n}_H)w_n d\mu=\int_X(fw_n)\diver_\mu \Phi d\mu+\int_Xf\gen{\Phi,\nabla_H\log w_n}_H w_nd\mu=\\
=-\int_X \gen{\nabla_H(fw_n),\Phi}_Hd\mu+\int_Xf\gen{\Phi,\nabla_H\log w_n}_H w_nd\mu=\\
=-\int_X \gen{\nabla_H f,\Phi}_Hw_nd\mu-\int_X f\gen{\nabla_H(w_n),\Phi}_Hd\mu+\int_Xf\gen{\Phi,\nabla_H\log w_n}_H w_nd\mu=\\
=-\int_X \gen{\nabla_H f,\Phi}_Hw_nd\mu-\int_X f\gen{\nabla_H(\log w_n),\Phi}_Hw_n d\mu+\int_Xf\gen{\Phi,\nabla_H\log w_n}_H w_nd\mu=\\
=-\int_X \gen{\nabla_H f,\Phi}_Hw_nd\mu.
\end{gather*}
Letting $n\ra+\infty$ we obtain that $\diver_\nu \Phi=\diver_\mu \Phi+\gen{\Phi,\nabla_H\log w}_H$. The inequality
\[\norm{\diver_\nu \Phi}_{\elle^{p}(X,\nu)}\leq C_p\norm{\Phi}_{W^{1,q}(X,\mu;H)}\]
follows by a standard H\" older inequality application and \cite[Proposition 5.8.8]{Bog98}.
\end{proof}

\section{Sobolev spaces on sublevel sets}\label{Sobolev spaces on admissible sets}

Let $G$ be a function satisfying Hypothesis \ref{ipotesi2}. We are interested in Sobolev spaces on sets of the form $G^{-1}(-\infty,0)$. Via a standard argument it is possible to prove that $\lip(G^{-1}(-\infty,0))$ is a dense subspace in $\elle^p(G^{-1}(-\infty,0),\nu)$ (see \cite{AT04}). Whenever $p\geq\frac{t}{t-s'}$ it is possible to define
\[\nabla_H^0:\lip(G^{-1}(-\infty,0))\longra\elle^p(G^{-1}(-\infty,0),\nu;H)\]
in the following way: for every $\varphi\in \lip(G^{-1}(-\infty,0))$, with Lipschitz constant $L_\varphi$, consider the McShane extension (see \cite{McS34})
\[\varphi_M(x)=\sup\set{\varphi(y)+L_\varphi\norm{x-y}_X\tc G(y)<0}.\]
It is well known that $\varphi_M\in\lip(X)\subseteq W^{1,q}(X,\nu)$ for some $q\geq\frac{t}{t-s'}$ ((\ref{W(mu) in W(nu)}) of Proposition \ref{proprerties of sobolev space} and \cite[Theorem 5.11.2]{Bog98}). Let
\[\nabla_H^0\varphi:=\nabla_H\varphi_M.\]

\begin{pro}
Let $p\geq \frac{t}{t-s'}$. The operator 
\[\nabla^0_H:\lip(G^{-1}(-\infty,0))\ra L^p(G^{-1}(-\infty,0),\nu; H)\] 
is closable in $\elle^p(G^{-1}(-\infty,0),\nu)$.
\end{pro}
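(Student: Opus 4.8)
The plan is to adapt the proof of Proposition~\ref{closure gradient}. Recall that closability amounts to the following: if $(\varphi_k)_{k\in\N}\subseteq\lip(G^{-1}(-\infty,0))$ satisfies $\varphi_k\to0$ in $\elle^p(G^{-1}(-\infty,0),\nu)$ and $\nabla_H^0\varphi_k\to\Phi$ in $\elle^p(G^{-1}(-\infty,0),\nu;H)$, then $\Phi=0$ $\nu$-a.e. on $G^{-1}(-\infty,0)$. Since $\{e_n\}$ is an orthonormal basis of $H$, and since $\fcon^\infty_b(X)$ is dense in $\elle^{p'}(X,\nu)$ so that its restrictions to $G^{-1}(-\infty,0)$ are dense in $\elle^{p'}(G^{-1}(-\infty,0),\nu)$ (extend by zero and approximate in $\elle^{p'}(X,\nu)$), it suffices to prove that
\[\int_{G^{-1}(-\infty,0)}\gen{\Phi,e_n}_H\,v\,d\nu=0\qquad\text{for every }n\in\N\text{ and }v\in\fcon^\infty_b(X).\]

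The point where the argument of Proposition~\ref{closure gradient} cannot be copied verbatim is that the McShane extension $\varphi_{k,M}\in\lip(X)$ of $\varphi_k$ (which defines $\nabla_H^0\varphi_k$) is not controlled on $X\ssm G^{-1}(-\infty,0)$, so Lemma~\ref{int by part} cannot be applied on $X$ directly. I would fix this by localizing the integration by parts away from $G^{-1}(0)$. For $\eps>0$ pick $\theta_\eps\in\con^1_b(\R)$ with $0\le\theta_\eps\le1$, $\theta_\eps\equiv1$ on $(-\infty,-2\eps]$, $\theta_\eps\equiv0$ on $[-\eps,+\infty)$, and $\theta_\eps'\equiv0$ in a neighbourhood of $-\eps$. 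Since $G\in W^{1,b}(X,\nu)$ for every $b\ge\frac{t}{t-s'}$ by Proposition~\ref{proprerties of sobolev space}$(\ref{W(mu) in W(nu)})$, Proposition~\ref{C1 func} gives $\theta_\eps\circ G\in W^{1,b}(X,\nu)$ for every such $b$, with $\nabla_H(\theta_\eps\circ G)=(\theta_\eps'\circ G)\nabla_HG$; moreover $\varphi_{k,M}\in\lip(X)$ and $v\in\fcon^\infty_b(X)$ are both in $W^{1,q}(X,\nu)$ for every $q$ (Proposition~\ref{proprerties of sobolev space}$(\ref{W(mu) in W(nu)})$ and \cite[Theorem~5.11.2]{Bog98}). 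Hence, by two applications of Proposition~\ref{proprerties of sobolev space}$(\ref{prodotto funzioni in W})$, the function $\varphi_{k,M}\,v\,(\theta_\eps\circ G)$ belongs to $W^{1,p}(X,\nu)$ and the Leibniz rule holds for it. The key feature is that $\theta_\eps\circ G$ and $\theta_\eps'\circ G$ vanish on $\set{G\ge-\eps}$, which contains $X\ssm G^{-1}(-\infty,0)$; therefore $\varphi_{k,M}\,v\,(\theta_\eps\circ G)$ and every product occurring below vanishes outside $G^{-1}(-\infty,0)$, where in turn $\varphi_{k,M}=\varphi_k$ and $\partial_n\varphi_{k,M}=\gen{\nabla_H^0\varphi_k,e_n}_H$ by the definition of $\nabla_H^0$.

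Next I would apply Lemma~\ref{int by part} — valid on $W^{1,p}(X,\nu)$ by Definition~\ref{Weightes Sobolev space definition} — to $f=\varphi_{k,M}\,v\,(\theta_\eps\circ G)$ and $h=e_n$, expand $\partial_nf$ by the Leibniz rule, and rearrange to get
\begin{gather*}
\int_{G^{-1}(-\infty,0)}\gen{\nabla_H^0\varphi_k,e_n}_H\,v\,(\theta_\eps\circ G)\,d\nu=\int_{G^{-1}(-\infty,0)}\varphi_k\,v\,(\theta_\eps\circ G)\,(\hat e_n-\partial_n\log w)\,d\nu\\
-\int_{G^{-1}(-\infty,0)}\varphi_k\,(\partial_nv)\,(\theta_\eps\circ G)\,d\nu-\int_{G^{-1}(-\infty,0)}\varphi_k\,v\,(\theta_\eps'\circ G)\,\partial_nG\,d\nu.
\end{gather*}
As $k\to\infty$ the left-hand side tends to $\int_{G^{-1}(-\infty,0)}\gen{\Phi,e_n}_H\,v\,(\theta_\eps\circ G)\,d\nu$, because $\gen{\nabla_H^0\varphi_k,e_n}_H\to\gen{\Phi,e_n}_H$ in $\elle^p$ and the bounded function $v\,(\theta_\eps\circ G)$ lies in $\elle^{p'}(G^{-1}(-\infty,0),\nu)$. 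Each term on the right is of the form $\int_{G^{-1}(-\infty,0)}\varphi_k\cdot(\text{fixed function})\,d\nu$ with $\varphi_k\to0$ in $\elle^p$, and the three fixed functions all belong to $\elle^{p'}(X,\nu)$: $\partial_nv$ is bounded; $v\,(\theta_\eps'\circ G)\,\partial_nG\in\bigcap_r\elle^r(X,\mu)$ since $G\in W^{2,q}(X,\mu)$ for every $q$, hence lies in $\elle^{p'}(X,\nu)$ after a H\"older estimate using $w\in\elle^s(X,\mu)$; and — the one place where the numerical hypothesis $p\ge\frac{t}{t-s'}$ is used — $\hat e_n-\partial_n\log w\in\elle^t(X,\mu)\subseteq\elle^{p's'}(X,\mu)$ because $p's'\le t$, so $v\,(\theta_\eps\circ G)\,(\hat e_n-\partial_n\log w)\in\elle^{p'}(X,\nu)$ by H\"older with $w\in\elle^s(X,\mu)$. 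Hence all three terms vanish in the limit and $\int_{G^{-1}(-\infty,0)}\gen{\Phi,e_n}_H\,v\,(\theta_\eps\circ G)\,d\nu=0$ for every $\eps>0$. Finally, since $0\le\theta_\eps\circ G\le1$ and $\theta_\eps\circ G\to1$ pointwise on $G^{-1}(-\infty,0)$ as $\eps\to0$, dominated convergence yields $\int_{G^{-1}(-\infty,0)}\gen{\Phi,e_n}_H\,v\,d\nu=0$, completing the argument.

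Most of the work is routine bookkeeping. The one genuinely delicate step will be the design of the cut-off $\theta_\eps\circ G$: it must annihilate the uncontrolled values of $\varphi_{k,M}$ outside $G^{-1}(-\infty,0)$ while keeping its own gradient $(\theta_\eps'\circ G)\nabla_HG$ integrable enough for the limit passage — which is exactly why we lean on $G\in W^{2,q}(X,\mu)$ for every $q$ — and, as in Proposition~\ref{closure gradient}, the assumption $p\ge\frac{t}{t-s'}$ enters only to guarantee that the weight-dependent coefficient $\hat e_n-\partial_n\log w$ lies in $\elle^{p'}(X,\nu)$.
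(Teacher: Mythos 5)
Your proposal is correct and follows essentially the same route as the paper: the paper also reduces closability to testing $\gen{\Phi,e_i}_H$ against $u\in\fcon^\infty_b(X)$, localizes with a cut-off $u_n=u\,(\eta_n\circ G)$ where $\eta_n(\xi)=\eta(n\xi)$ vanishes on $[-1/n,+\infty)$ and equals $1$ on $(-\infty,-2/n]$ (your $\theta_\eps\circ G$ with $\eps=1/n$), applies the weighted integration-by-parts formula together with the Leibniz rule, and kills the remainder terms by the same H\"older estimates using $w\in\elle^s(X,\mu)$, $\partial_i\log w\in\elle^t(X,\mu)$ with $p's'\le t$, and $\partial_i G\in\bigcap_q\elle^q(X,\mu)$. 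The only cosmetic difference is the order in which the two limits are organized (the paper handles $n\to\infty$ via $\norm{u_n-u}_{p'}\to0$, you via dominated convergence), which changes nothing.
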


\begin{proof}
Let $(f_k)_{k\in\N}\subseteq \lip(G^{-1}(-\infty,0))$ such that
\begin{align*}
&\lim_{k\ra+\infty}f_k=0\qquad\text{ in }L^p(G^{-1}(-\infty,0),\nu);\\
&\lim_{k\ra+\infty}\nabla_H^0 f_k=\Phi\qquad\text{ in }L^p(G^{-1}(-\infty,0),\nu;H).
\end{align*}
We want to prove that
\[\int_{G^{-1}(-\infty,0)}\gen{\Phi(x),e_i}u(x)d\nu(x)=0\]
for every $i\in\N$ and $u\in\fcon^{\infty}_b(X)$. Let $\eta:\R\ra\R$ a smooth function such that $\norm{\eta}_\infty\leq 1$, $\norm{\eta'}_\infty\leq 2$ and
\[\eta(\xi)=\eqsys{0 & \xi\geq -1\\
1 & \xi\leq -2}\]
Let $\eta_n(\xi):=\eta(n\xi)$ and $u_n(x)=u(x)\eta_n(G(x))$. Observe that $u_n$ converges pointwise $\nu$-a.e. to $u$ on $G^{-1}(-\infty,0)$ and $\abs{u_n}\leq \abs{u}$ $\nu$-a.e., then by Lebesgue's dominated convergence theorem
\[\lim_{n\ra+\infty}\int_{G^{-1}(-\infty,0)}\gen{\Phi(x),e_i}u_n(x)d\nu(x)=\int_{G^{-1}(-\infty,0)}\gen{\Phi(x),e_i}u(x)d\nu(x).\]
By (\ref{prodotto funzioni in W}) of Proposition \ref{proprerties of sobolev space}, for every $n\in\N$ we have $u_n\in W^{1,r}(X,\nu)$, for every $r\geq\frac{t}{t-s'}$, and by Proposition \ref{C1 func}
\[\partial_i u_n(x)=\partial_i u(x)\eta_n(G(x))+u(x)\eta_n'(G(x))\partial_i G(x).\]
Observe that
\begin{gather*}
\int_X u_n\partial_i f_kd\nu=\int_X f_ku_n(\hat{e}_i-\partial_i\log w)d\nu-\int_X f_k \partial_i u(\eta_n\circ G)d\nu-\int_X f_k u(\eta'\circ G)\partial_i Gd\nu,
\end{gather*}
and the following estimates holds:
\begin{gather*}
\int_X\abs{\partial_i f_k u_n-\gen{\Phi,e_i}_H u}d\nu\leq \int_X\abs{\partial_i f_k}\abs{u_n-u}d\nu+\int_X\abs{\partial_i f_k-\gen{\Phi,e_i}_H}\abs{u}d\nu\leq\\
\leq\pa{\int_X\abs{\partial_i f_k}^pd\nu}^{\frac{1}{p}}\pa{\int_X\abs{u_n-u}^{p'}d\nu}^{\frac{1}{p'}}+\pa{\int_X\abs{\partial_i f_k-\gen{\Phi,e_i}_H}^p d\nu}^{\frac{1}{p}}\pa{\int_X\abs{u}^{p'}d\nu}^{\frac{1}{p'}},
\end{gather*}
this means $\lim_{n\ra+\infty}\lim_{k\ra+\infty}\int_X u_n\partial_i f_kd\nu=\int_{G^{-1}(-\infty,0)}\gen{\Phi,e_i}u d\nu$. Furthermore for every $n\in\N$ we get
\begin{gather*}
\int_X\abs{f_k\partial_i u(\eta_n\circ G)} d\nu\leq \int_X\abs{f_k\partial_i u}d\nu\leq\pa{\int_X\abs{f_k}^pd\nu}^{\frac{1}{p}}\pa{\int_X\abs{\partial_i u}^{p'}d\nu}^{\frac{1}{p'}}\xra{k\ra+\infty}0;\\
\int_X\abs{f_k u(\eta'_n\circ G)\partial_i G}d\nu\leq\norm{u}_\infty\pa{\int_X\abs{f_k}^p d\nu}^{\frac{1}{p}}\pa{\int_X\abs{\partial_i G}^{p's'} d\mu}^{\frac{1}{p's'}}\pa{\int_X w^sd\mu}^{\frac{1}{p's}}\xra{k\ra+\infty}0,\\
\intertext{where the last limit follows by to Hypothesis \ref{ipotesi2};}
\int_X\abs{f_k u_n\hat{e}_i}d\nu\leq\norm{u}_\infty\pa{\int_X\abs{f_k}^pd\nu}^{\frac{1}{p}}\pa{\int_X\abs{\hat{e}_i}^{p'}d\nu}^{\frac{1}{p'}}\xra{k\ra+\infty}0;\\
\int_X\abs{f_k u_n \partial_i \log w}d\nu\leq\norm{u}_\infty\pa{\int_X\abs{f_k}^pd\nu}^{\frac{1}{p}}\pa{\int_X\abs{w}^sd\mu}^{\frac{1}{p's}}\pa{\int_X\abs{\partial_i \log w}^{p's'}d\mu}^{\frac{1}{p's'}}\xra{k\ra+\infty}0,
\end{gather*}
and the last limit exists whenever $p's'\leq t$.
\end{proof}

\begin{defn}[Weighted Sobolev space on sublevel sets]
Let $p\geq\frac{t}{t-s'}$. We denote by $W^{1,p}(G^{-1}(-\infty,0),\nu)$ the domain of the closure of the operator $\nabla_H^0$ (which we will denote by the symbol $\nabla_H$) in $\elle^p(G^{-1}(-\infty,0),\nu)$. It is a Banach space with the graph norm
\[\norm{f}_{W^{1,p}(G^{-1}(-\infty,0),\nu)}=\pa{\int_{G^{-1}(-\infty,0)}\abs{f(x)}^pd\nu(x)}^{\frac{1}{p}}+\pa{\int_{G^{-1}(-\infty,0)}\abs{\nabla_H f(x)}_H^pd\nu(x)}^{\frac{1}{p}}.\]
\end{defn}

The following equalities are what will allow us to define the trace operator in the following section.

\begin{pro}\label{Divergence without trace}
Let $k\in \N$. Then for every $\varphi\in\lip_b(G^{-1}(-\infty,0))$ and $G$ satisfying Hypothesis \ref{ipotesi2}, we have
\[\int_{G^{-1}(-\infty,0)}\pa{\partial_k\varphi+\varphi\partial_k\log w-\varphi\hat{e}_k}d\nu=\int_{G^{-1}(0)}\pa{\frac{\varphi \partial_k G}{\abs{\nabla_H G}_H}}_{_{|_{G^{-1}(0)}}}wd\rho.\]
\end{pro}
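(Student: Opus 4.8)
The plan is to obtain the formula by approximating $\mathbf 1_{G^{-1}(-\infty,0)}$ by smooth cut-offs of $G$, applying the integration by parts formula \eqref{int} to the resulting products, and then recognising the boundary term as a surface integral against $\rho$ via the coarea formula for the Feyel--de La Pradelle measure.

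Concretely, fix $\varphi\in\lip_b(G^{-1}(-\infty,0))$ and let $\varphi_M\in\lip(X)$ be its McShane extension; by \eqref{W(mu) in W(nu)} of Proposition \ref{proprerties of sobolev space} and \cite[Theorem 5.11.2]{Bog98} one has $\varphi_M\in W^{1,q}(X,\nu)$ for every $q\geq\frac{t}{t-s'}$, $\varphi_M=\varphi$ and $\nabla_H\varphi_M=\nabla_H^0\varphi$ on $G^{-1}(-\infty,0)$, and $\varphi_M$ is bounded $\rho$-a.e.\ on $G^{-1}(0)$. Choose $\theta\in\con^\infty(\R)$ with $\theta\equiv 1$ on $(-\infty,-2]$, $\theta\equiv 0$ on $[-1,+\infty)$ and $0\leq-\theta'\leq 2$, and set $\eta_n(\xi):=\theta(n\xi)$, so $\eta_n\to\mathbf 1_{(-\infty,0)}$ pointwise, $\eta_n'$ is supported in $(-2/n,-1/n)$ and $\int_\R\eta_n'=-1$. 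By Proposition \ref{C1 func}, $\eta_n\circ G\in W^{1,q}(X,\nu)$ with $\nabla_H(\eta_n\circ G)=(\eta_n'\circ G)\nabla_H G$, and by \eqref{prodotto funzioni in W} of Proposition \ref{proprerties of sobolev space} the product $\varphi_M(\eta_n\circ G)$ lies in $W^{1,r}(X,\nu)$ for a suitable $r\geq\frac{t}{t-s'}$, with $\partial_k(\varphi_M(\eta_n\circ G))=(\eta_n\circ G)\partial_k\varphi_M+\varphi_M(\eta_n'\circ G)\partial_k G$. Applying \eqref{int} (which holds on all of $W^{1,r}(X,\nu)$, cf.\ Definition \ref{Weightes Sobolev space definition}) with $h=e_k$ and rearranging, and using that $\eta_n\circ G$ and $\eta_n'\circ G$ vanish where $G\geq 0$, gives
\[\int_{G^{-1}(-\infty,0)}(\eta_n\circ G)\pa{\partial_k\varphi+\varphi\partial_k\log w-\varphi\hat{e}_k}\,d\nu=-\int_{G^{-1}(-\infty,0)}\varphi\,(\eta_n'\circ G)\,\partial_k G\,d\nu .\]

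Now let $n\to+\infty$. On the left, $\eta_n\circ G\to\mathbf 1_{G^{-1}(-\infty,0)}$ $\nu$-a.e.\ (the law of $G$ under $\mu$ has no atom at $0$, a consequence of the $W^{2,q}$-regularity of $G$ together with Hypothesis \ref{ipotesi2}\eqref{Bastaaaaa}, so $\nu(G^{-1}(0))=0$), while $|\eta_n\circ G|\leq 1$ and the bracket lies in $\elle^1(X,\nu)$: $\partial_k\varphi$ is bounded, and by Hölder with $w\in\elle^s(X,\mu)$, $\partial_k\log w\in\elle^t(X,\mu)\subseteq\elle^{s'}(X,\mu)$ and $\hat{e}_k\in\bigcap_m\elle^m(X,\mu)$ both $\varphi\partial_k\log w$ and $\varphi\hat{e}_k$ are $\nu$-integrable; dominated convergence yields the left-hand side of the claim. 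On the right, write the integral as $-\int_X\pa{\varphi_M\partial_k G\,w\,\abs{\nabla_H G}_H^{-1}}\abs{\nabla_H G}_H(\eta_n'\circ G)\,d\mu$; near $G^{-1}(0)$ the factor $f:=\varphi_M\partial_k G\,w\,\abs{\nabla_H G}_H^{-1}$ is integrable by Hölder, using that $\varphi_M$ is bounded there, $\partial_k G\in\bigcap_q\elle^q(X,\mu)$ (as $G\in W^{2,q}(X,\mu)$ for all $q$), $w\in\elle^s(X,\mu)$ and $\abs{\nabla_H G}_H^{-1}\in\bigcap_q\elle^q(G^{-1}(-\delta,\delta),\mu)$ by Hypothesis \ref{ipotesi2}\eqref{Bastaaaaa}. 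The coarea formula for the Feyel--de La Pradelle measure (see \cite{Fey01,CL14}) rewrites this as $-\int_\R\eta_n'(\xi)\pa{\int_{G^{-1}(\xi)}f\,d\rho_\xi}d\xi$, where $\rho_\xi$ is the surface measure on $G^{-1}(\xi)$ and $\rho_0=\rho$; since $-\eta_n'$ is an approximate identity at $0$ and $\xi\mapsto\int_{G^{-1}(\xi)}f\,d\rho_\xi$ is continuous at $0$ under Hypothesis \ref{ipotesi2}, this converges to $\int_{G^{-1}(0)}f\,d\rho=\int_{G^{-1}(0)}\pa{\varphi\partial_k G\abs{\nabla_H G}_H^{-1}}_{|_{G^{-1}(0)}}w\,d\rho$. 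Equating the two limits gives the statement.

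The main obstacle is this last step: establishing the coarea formula in infinite dimensions, identifying the level-set measures $\rho_\xi$ with the Feyel--de La Pradelle construction, and controlling the dependence of $\int_{G^{-1}(\xi)}f\,d\rho_\xi$ on $\xi$ near $0$; this is exactly where the full strength of Hypothesis \ref{ipotesi2}, in particular the integrability \eqref{Bastaaaaa} of $\abs{\nabla_H G}_H^{-1}$, is used, together with the surface-measure results of \cite{Fey01} and \cite{CL14}. A slightly shorter alternative avoids the cut-offs entirely: apply a Gaussian Gauss--Green formula on $G^{-1}(-\infty,0)$ directly to $\psi:=\varphi_M w\in W^{1,s}(X,\mu)$ and use $\partial_k\psi-\psi\hat{e}_k=w(\partial_k\varphi_M+\varphi_M\partial_k\log w-\varphi_M\hat{e}_k)$ together with $w\,d\mu=d\nu$; this relies on the same surface-measure input, now for the exponent $s$, which is admissible since $\abs{\nabla_H G}_H^{-1}\in\elle^q$ for every $q$.
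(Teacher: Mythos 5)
Your argument is correct and is essentially the proof the paper intends: the paper's own proof of this proposition is just a citation to \cite[Equation (1.1), proof in Proposition 4.1]{CL14}, whose argument is exactly the cutoff-plus-coarea scheme you describe (smooth truncations $\eta_n\circ G$, integration by parts, identification of the limit of the $\eta_n'(G)\,\partial_kG$ term via the Feyel--de La Pradelle coarea formula and the continuity of $\xi\mapsto\int_{G^{-1}(\xi)}f\,d\rho$ at $0$), here adapted to $\nu=w\mu$ through the weighted formula \eqref{int}. The one ingredient you defer --- the infinite-dimensional coarea formula and the regularity in $\xi$ of the level-set integrals, which is where Hypothesis \ref{ipotesi2}\eqref{Bastaaaaa} enters --- is precisely the surface-measure machinery of \cite{Fey01} and \cite{CL14} that the paper's citation supplies, so no genuinely new work is missing.
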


\begin{proof}
The proof is the same as \cite[Equation (1.1), proof in Proposition 4.1]{CL14}.
\end{proof}

\begin{pro}\label{traces estimates}
Let $q\geq 1$. Then for every $\varphi\in\lip_b(G^{-1}(-\infty,0))$ and $G$ satisfying Hypothesis \ref{ipotesi2}, we have
\[\int_{G^{-1}(-\infty,0)}\pa{q\varphi\abs{\varphi}^{q-2}\gen{\nabla_H\varphi,\nabla_H G}_H-\abs{\varphi}^q \diver_\nu \nabla_H G}d\nu=\int_{G^{-1}(0)}(\abs{\varphi}^q\abs{\nabla_H G}_H)_{_{|_{G^{-1}(0)}}}wd\rho,\]
and
\[\int_{G^{-1}(-\infty,0)}\pa{q\varphi\abs{\varphi}^{q-2}\gen{\nabla_H\varphi,\frac{\nabla_H G}{\abs{\nabla_H G}_H}}_H+\diver_\nu\frac{\nabla_H G}{\abs{\nabla_H G}_H}\abs{\varphi}^q}d\nu=\int_{G^{-1}(0)}(\abs{\varphi}^q)_{_{|_{G^{-1}(0)}}}wd\rho.\]
\end{pro}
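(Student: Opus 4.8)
The plan is to read both identities as the weighted divergence theorem on the sublevel set $G^{-1}(-\infty,0)$ — that is, the ``summed'' form of Proposition~\ref{Divergence without trace} — applied to a suitable $H$-valued vector field. Set $\psi:=\abs{\varphi}^q$; for $q\ge1$ and $\varphi\in\lip_b(G^{-1}(-\infty,0))$ one has $\psi\in\lip_b(G^{-1}(-\infty,0))$ and, by Propositions~\ref{C1 func} and~\ref{derivate of modulus}, $\nabla_H\psi=q\,\varphi\abs{\varphi}^{q-2}\nabla_H\varphi$. Applying the divergence theorem to $\Phi:=\psi\,\nabla_H G$ and expanding the left side by the Leibniz rule $\diver_\nu(\psi\Phi_0)=\gen{\nabla_H\psi,\Phi_0}_H+\psi\diver_\nu\Phi_0$ reproduces the first integrand; on the right, $\gen{\nabla_H G,\nabla_H G}_H=\abs{\nabla_H G}_H^2$ collapses $\gen{\Phi,\nabla_H G}_H/\abs{\nabla_H G}_H$ to $\psi\abs{\nabla_H G}_H$. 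Taking instead $\Phi:=\psi\,\nabla_H G/\abs{\nabla_H G}_H$ reproduces the second identity, the surface integrand now collapsing to $\psi$.

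To make this rigorous I would first record a finite-rank version of the divergence theorem: if $\psi\in\lip_b(G^{-1}(-\infty,0))$ and $\Psi=\sum_{k=1}^n\psi_k e_k$ with $\psi_k\in\fcon_b^\infty(X)$, then summing Proposition~\ref{Divergence without trace} over $k=1,\dots,n$ with $\varphi$ replaced by $\psi\psi_k\in\lip_b(G^{-1}(-\infty,0))$ gives
\[\int_{G^{-1}(-\infty,0)}\pa{\gen{\nabla_H\psi,\Psi}_H+\psi\diver_\nu\Psi}d\nu=\int_{G^{-1}(0)}\pa{\frac{\psi\gen{\Psi,\nabla_H G}_H}{\abs{\nabla_H G}_H}}_{|_{G^{-1}(0)}}w\,d\rho,\]
where $\diver_\nu\Psi=\sum_{k}(\partial_k\psi_k+\psi_k\partial_k\log w-\psi_k\hat{e}_k)$ and the left integrand is obtained from the Leibniz rule as above. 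For the first identity I would then choose $H$-valued smooth cylindrical fields $\Psi_n\to\nabla_H G$ in $W^{1,r}(X,\mu;H)$ for every $r>1$ — possible because $G\in W^{2,r}(X,\mu)$ for all $r$ by Hypothesis~\ref{ipotesi2} — apply the finite-rank version with $\psi=\abs{\varphi}^q$, and let $n\to\infty$. On the left, $\Psi_n\to\nabla_H G$ in $\elle^r(X,\nu;H)$ by Proposition~\ref{proprerties of sobolev space}\eqref{W(mu) in W(nu)} and $\diver_\nu\Psi_n\to\diver_\nu\nabla_H G$ in $\elle^r(X,\nu)$ for $r<t/s'$ by the estimate in Proposition~\ref{divergence with mu} (which also makes the limiting integrand integrable, since $\nabla_H G\in W^{1,q}(X,\mu;H)$ for all $q$), while $\psi$ and $\nabla_H\psi$ are bounded; the surface integrals converge via the trace inequality controlling $\int_{G^{-1}(0)}\big(\abs{h}\big)_{|_{G^{-1}(0)}}w\,d\rho$ by a Gaussian Sobolev norm of $h$ — the same estimate behind Proposition~\ref{Divergence without trace}/\cite{CL14}, which uses $(1,r)$-precise versions and Hypothesis~\ref{ipotesi2}.

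For the second identity the scheme is identical with $\nabla_H G$ replaced by $\nabla_H G/\abs{\nabla_H G}_H$; the only new point is that this field is merely bounded, not of Sobolev class globally. Here Hypothesis~\ref{ipotesi2}\eqref{Bastaaaaa}, which gives $\abs{\nabla_H G}_H^{-1}\in\elle^r(G^{-1}(-\delta,\delta),\mu)$ for every $r$, lets one carry out the same approximation on the tube $G^{-1}(-\delta,\delta)$ after inserting a cutoff $\eta_n\circ G$ of the kind used in Section~\ref{Sobolev spaces on admissible sets} and then letting $\eta_n\circ G\to1$ on $G^{-1}(-\infty,0)$; equivalently, one applies the first identity after checking, via Hypothesis~\ref{ipotesi2}\eqref{Bastaaaaa} together with the quotient rule in Proposition~\ref{proprerties of sobolev space}, that $\abs{\varphi}^q\abs{\nabla_H G}_H^{-1}$ lies in the appropriate Sobolev class near the surface. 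The main obstacle throughout is precisely this regularity bookkeeping at $G^{-1}(0)$ — justifying that the non-Lipschitz fields $\nabla_H G$ and $\nabla_H G/\abs{\nabla_H G}_H$ may legitimately be inserted into Proposition~\ref{Divergence without trace}, and that the associated surface integrals pass to the limit; the interior convergences are routine once Propositions~\ref{proprerties of sobolev space} and~\ref{divergence with mu} are available.
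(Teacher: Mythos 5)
Your overall strategy --- sum Proposition \ref{Divergence without trace} over directions to get a divergence theorem for finite-rank cylindrical fields, expand by the Leibniz rule, and then approximate the fields $\nabla_H G$ and $\nabla_H G/\abs{\nabla_H G}_H$ --- is sound in outline, and it is essentially the mechanism behind the reference the paper itself defers to (the paper's ``proof'' is the single line ``same as \cite[Proposition 4.1]{CL14}''). One thing you should have caught, though: your Leibniz expansion $\diver_\nu(\psi\Phi_0)=\gen{\nabla_H\psi,\Phi_0}_H+\psi\diver_\nu\Phi_0$ with $\Phi_0=\nabla_H G$ yields $q\varphi\abs{\varphi}^{q-2}\gen{\nabla_H\varphi,\nabla_H G}_H+\abs{\varphi}^q\diver_\nu\nabla_H G$, i.e.\ a \emph{plus} sign on the divergence term, while the first display of the proposition carries a minus. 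You claim your computation ``reproduces the first integrand''; it does not as stated. The plus sign is the one consistent with the second display, with Theorem \ref{divergence theorem with traces} and with the explicit formula for $\diver_\nu$ in Proposition \ref{divergence for W12}, so the stated minus appears to be a typo in the paper --- but a correct derivation must flag the discrepancy rather than assert agreement.

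The genuine gap is the step you defer at the end: convergence of the surface integrals $\int_{G^{-1}(0)}\psi\,\gen{\Psi_n-\nabla_H G,\nabla_H G}_H\abs{\nabla_H G}_H^{-1}w\,d\rho$ as $\Psi_n\to\nabla_H G$ in $W^{1,r}(X,\mu;H)$. This needs an estimate of the form $\int_{G^{-1}(0)}\abs{\ol h}\,d\rho\le C\norm{h}_{W^{1,r}(X,\mu)}$ for precise versions $\ol h$, i.e.\ continuity of restriction to $G^{-1}(0)$ on Gaussian Sobolev spaces. In \cite{CL14} that continuity is a \emph{corollary} of the very identity you are proving, so invoking ``the same estimate behind Proposition \ref{Divergence without trace}'' risks circularity: you must either establish the bound independently (via the $C_{1,r}$-capacity estimates of Section \ref{Notations and preliminaries} and the fact that the restriction of $\rho$ to $G^{-1}(0)$ does not charge capacity-null sets), or follow the direct route of \cite{CL14}, which proves the summed identity by integrating by parts on all of $X$ against a mollified Heaviside function $\theta_n\circ G$ and identifying the concentrating term with the Feyel--de La Pradelle measure. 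Your handling of the normalized field in the second identity --- a cutoff supported in the tube $G^{-1}(-\delta,\delta)$ together with Hypothesis \ref{ipotesi2}\eqref{Bastaaaaa} to control $\abs{\nabla_H G}_H^{-1}$ there --- is the right idea, but it is subject to the same caveat about the boundary terms.
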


\begin{proof}
The proof is the same as \cite[Proposition 4.1]{CL14}.
\end{proof}

\section{Traces of Sobolev functions on sublevel sets}\label{Traces}

Throughout this section we will denote by $G$ a function satisfying Hypothesis \ref{ipotesi2}. The following result is fundamental for the definition of the trace operator.

\begin{pro}\label{constistenza definizione traccia}
Let $p\geq \frac{t}{t-s'}$. The following holds:
\begin{enumerate}
\item if $(\varphi_n)_{n\in\N}\subseteq\lip_b(G^{-1}(-\infty,0))$ is a Cauchy sequence in $W^{1,p}(G^{-1}(-\infty,0),\nu)$, then $(\varphi_{n_{|_{G^{-1}(0)}}})$ is a Cauchy sequence in $\elle^q(G^{-1}(0),w\rho)$ for every $1\leq q\leq p\frac{t-s'}{t}$;

\item if $(\varphi_n)_{n\in\N},(\psi_n)_{n\in\N}\subseteq\lip_b(G^{-1}(-\infty,0))$ converge to $\varphi$ in $W^{1,p}(G^{-1}(-\infty,0),\nu)$, then $(\varphi_{n_{|_{G^{-1}(0)}}})$ and $(\psi_{n_{|_{G^{-1}(0)}}})$ converge to the same element in $\elle^q(G^{-1}(0),w\rho)$ for every $1\leq q\leq p\frac{t-s'}{t}$.
\end{enumerate}
\end{pro}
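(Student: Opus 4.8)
The plan is to deduce both assertions from a single \emph{trace inequality}. Write $\Omega:=G^{-1}(-\infty,0)$ and $\Sigma:=G^{-1}(0)$; the claim is that there is a constant $C=C(q,G,w)>0$ with
\[\pa{\int_{\Sigma}\abs{\psi_{|_{\Sigma}}}^q\,w\,d\rho}^{1/q}\le C\norm{\psi}_{W^{1,p}(\Omega,\nu)}\qquad\text{for every }\psi\in\lip_b(\Omega),\]
valid whenever $1\le q\le p\frac{t-s'}{t}$. Granting it, the proposition is soft: $\lip_b(\Omega)$ is a vector space and $\psi\mapsto\psi_{|_{\Sigma}}$ (the restriction to $\Sigma$ of any Lipschitz extension of $\psi$) is linear, so applying the inequality to $\psi=\varphi_n-\varphi_m$ turns a $W^{1,p}(\Omega,\nu)$-Cauchy sequence into a Cauchy sequence in $\elle^q(\Sigma,w\rho)$, which is (1). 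For (2), by (1) and completeness of $\elle^q(\Sigma,w\rho)$ the two sequences of traces converge, say to $\tau_1,\tau_2\in\elle^q(\Sigma,w\rho)$, and since
\[\norm{\varphi_{n|_{\Sigma}}-\psi_{n|_{\Sigma}}}_{\elle^q(\Sigma,w\rho)}\le C\norm{\varphi_n-\psi_n}_{W^{1,p}(\Omega,\nu)}\le C\pa{\norm{\varphi_n-\varphi}_{W^{1,p}(\Omega,\nu)}+\norm{\varphi-\psi_n}_{W^{1,p}(\Omega,\nu)}}\ra 0,\]
the limits $\tau_1$ and $\tau_2$ coincide.

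To prove the trace inequality I would apply the second identity of Proposition \ref{traces estimates} to $\psi$, abbreviate $N:=\nabla_H G/\abs{\nabla_H G}_H$, and bound the two terms on its right-hand side, using throughout that $\nu(\Omega)\le\nu(X)=\int_Xw\,d\mu<+\infty$ (since $w\in\elle^s(X,\mu)$). By Cauchy--Schwarz in $H$ the first term is at most $q\int_{\Omega}\abs{\psi}^{q-1}\abs{\nabla_H\psi}_H\,d\nu$, and H\"older with exponents $p'$ and $p$ bounds this by $q\,\norm{\psi}_{\elle^{(q-1)p'}(\Omega,\nu)}^{q-1}\norm{\nabla_H\psi}_{\elle^p(\Omega,\nu)}$; as $q\le p\frac{t-s'}{t}\le p$ we have $(q-1)p'\le p$, so finiteness of $\nu$ makes this at most $C\norm{\psi}_{W^{1,p}(\Omega,\nu)}^q$. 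For the second term, H\"older with the conjugate exponents $\frac{t}{t-s'}$ and $\frac{t}{s'}$ bounds it by $\norm{\psi}_{\elle^{qt/(t-s')}(\Omega,\nu)}^q\,\norm{\diver_\nu N}_{\elle^{t/s'}(\Omega,\nu)}$, and the assumption $q\le p\frac{t-s'}{t}$ is precisely $q\frac{t}{t-s'}\le p$, so again by finiteness of $\nu$ this term is at most $C\norm{\psi}_{W^{1,p}(\Omega,\nu)}^q$, \emph{provided} $\diver_\nu N\in\elle^{t/s'}(X,\nu)$. Summing the two bounds and taking $q$-th roots yields the trace inequality.

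The crux is therefore the membership $\diver_\nu N\in\elle^{t/s'}(X,\nu)$; this is exactly where Hypothesis \ref{ipotesi2}\eqref{Bastaaaaa} enters, and the endpoint $q=p\frac{t-s'}{t}$ is genuinely delicate here, since a divergence lying only in $\elle^r(X,\nu)$ for every $r<t/s'$ would yield merely the open range $q<p\frac{t-s'}{t}$. The key point is that $N$ has $H$-norm identically $1$: by formula (\ref{Formula divergenza}), $\diver_\nu N=\diver_\mu N+\gen{N,\nabla_H\log w}_H$, where $\abs{\gen{N,\nabla_H\log w}_H}\le\abs{\nabla_H\log w}_H\in\elle^t(X,\mu)$ by Hypothesis \ref{ipotesi1}, and $\diver_\mu N\in\elle^q(X,\mu)$ for every $q<+\infty$ by \cite[Proposition 5.8.8]{Bog98} once one knows that $N\in W^{1,q}(X,\mu;H)$ for every $q$. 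This last fact follows from $G\in W^{2,q}(X,\mu)$ for every $q$ together with Hypothesis \ref{ipotesi2}\eqref{Bastaaaaa} — which supplies the integrability of $\abs{\nabla_H G}_H^{-1}$ near $\Sigma$ needed to differentiate $N$ — via a product/chain rule, after multiplying $N$ by a cutoff $\chi\circ G$, $\chi\in\con^\infty(\R)$, equal to $1$ on a neighbourhood of $\Sigma$ (so that, rederived for $(\chi\circ G)N$, neither the trace nor the identity of Proposition \ref{traces estimates} changes). Hence $\diver_\nu N\in\elle^t(X,\mu)$, and since $\tfrac{t}{s'}\cdot s'=t$ a final H\"older estimate against $w\in\elle^s(X,\mu)$ gives $\int_X\abs{\diver_\nu N}^{t/s'}w\,d\mu<+\infty$, i.e.\ $\diver_\nu N\in\elle^{t/s'}(X,\nu)$. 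I expect this step — establishing the $W^{1,q}(X,\mu;H)$-regularity of $N$ for all $q$, hence the sharp $\elle^{t/s'}(X,\nu)$-integrability of $\diver_\nu N$, with the attendant localisation — to be the main obstacle; everything else is Cauchy--Schwarz and H\"older on a finite measure space.
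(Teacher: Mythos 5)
Your proof follows essentially the same route as the paper's: both hinge on applying the second identity of Proposition \ref{traces estimates} to the difference $\varphi_n-\varphi_m$ and estimating the two resulting volume integrals by the Cauchy--Schwarz and H\"older inequalities on the finite measure space $(G^{-1}(-\infty,0),\nu)$. Where you go beyond the written proof is in making explicit two points it leaves implicit. The paper disposes of all convergences by citing Proposition \ref{divergence with mu}, which --- as you correctly observe --- only yields $\diver_\nu\big(\nabla_H G/\abs{\nabla_H G}_H\big)\in\elle^r(X,\nu)$ for $r\in[1,t/s')$ and hence, after H\"older, only the open range $q<p\frac{t-s'}{t}$; your sharpening, exploiting $\abs{N}_H\equiv 1$ so that $\gen{N,\nabla_H\log w}_H$ lies in $\elle^t(X,\mu)$ exactly (not merely in $\elle^r$ for every $r<t$) and hence $\diver_\nu N\in\elle^{t/s'}(X,\nu)$, is precisely what is needed to reach the closed endpoint $q=p\frac{t-s'}{t}$ claimed in the statement. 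Likewise, your localisation of $N$ by a cutoff $\chi\circ G$ addresses the genuine issue that Hypothesis \ref{ipotesi2}\eqref{Bastaaaaa} controls $\abs{\nabla_H G}_H^{-1}$ only on $G^{-1}(-\delta,\delta)$, so that $N$ need not belong to $W^{1,q}(X,\mu;H)$ globally; the paper delegates this (together with the validity of the identity itself) to \cite{CL14}. In short, your argument is correct, structurally identical to the paper's, and at the endpoint exponent it is actually more complete than the proof as written.
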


\begin{proof}
Let $(\varphi_n)_{n\in\N}\subseteq\lip_b(G^{-1}(-\infty,0))$ be a Cauchy sequence in $W^{1,p}(G^{-1}(-\infty,0),\nu)$. By Proposition \ref{traces estimates} we have for every $q\in\left[1,p\frac{t-s'}{t}\right]$
\begin{gather*}
\int_{G^{-1}(0)}\abs{\varphi_n-\varphi_m}^qwd\rho
=\int_{G^{-1}(-\infty,0)}\pa{q\abs{\varphi_n-\varphi_m}^{q-2}(\varphi_n-\varphi_m)\gen{\nabla_H(\varphi_n-\varphi_m),\frac{\nabla_H G}{\abs{\nabla_H G}_H}}_H}d\nu+\\
+\int_{G^{-1}(-\infty,0)}\pa{\diver_\nu\frac{\nabla_H G}{\abs{\nabla_H G}_H}\abs{\varphi_n-\varphi_m}^q}d\nu\xra{n,m\ra+\infty}0.
\end{gather*}
All the convergences are assured by Proposition \ref{divergence with mu}. Thus (1) is proved.

Let $(\varphi_n)_{n\in\N},(\psi_n)_{n\in\N}\subseteq\lip_b(G^{-1}(-\infty,0))$ satisfying (2) and let $\varphi_\infty$ the limit of $(\varphi_{n_{|_{G^{-1}(0)}}})$ in $\elle^q(G^{-1}(0),w\rho)$ for every $1\leq q\leq p\frac{t-s'}{t}$ (it exists by (1)). By Proposition \ref{traces estimates} we have for every $q\in\left[1,p\frac{t-s'}{t}\right]$
\begin{gather*}
\int_{G^{-1}(0)}\abs{\psi_n-\varphi_\infty}^qwd\rho\leq 2^{q-1}\pa{\int_{G^{-1}(0)}\abs{\psi_n-\varphi_n}^qwd\rho+\int_{G^{-1}(0)}\abs{\varphi_n-\varphi_\infty}^qwd\rho}=\\
=2^{q-1}\biggl(\int_{G^{-1}(-\infty,0)}\pa{q\abs{\psi_n-\varphi_n}^{q-2}(\psi_n-\varphi_n)\gen{\nabla_H(\psi_n-\varphi_n),\frac{\nabla_H G}{\abs{\nabla_H G}_H}}_H}d\nu+\\
+\int_{G^{-1}(-\infty,0)}\pa{\diver_\nu\frac{\nabla_H G}{\abs{\nabla_H G}_H}\abs{\psi_n-\varphi_n}^q}d\nu+\int_{G^{-1}(0)}\abs{\varphi_n-\varphi_\infty}^qwd\rho\biggr )\xra{n,m\ra+\infty}0.
\end{gather*}
Thus (2) is proved.
\end{proof}

We are now able to define the trace of a Sobolev function.

\begin{defn}\label{Trace definition}
Let $p\geq \frac{t}{t-s'}$. If $\varphi\in W^{1,p}(G^{-1}(-\infty,0),\nu)$ we define the trace of $\varphi$ on $G^{-1}(0)$ as follows:
\[\trace_{G^{-1}(0)}\varphi=\lim_{n\ra+\infty}\varphi_{n_{|_{G^{-1}(0)}}}\qquad\text{in }\elle^{q}(G^{-1}(0),w\rho)\text{ for every } q\in\left[1,p\frac{t-s'}{t}\right],\]
where $(\varphi_{n})_{n\in\N}$ is any sequence in $\lip_b(G^{-1}(-\infty,0))$ which converges in $W^{1,p}(G^{-1}(-\infty,0),\nu)$ to $\varphi$. By Proposition \ref{constistenza definizione traccia} the definition does not depend on the choice of the sequence $(\varphi_n)_{n\in\N}$ in $\lip_b(G^{-1}(-\infty,0))$ approximating $\varphi$ in $W^{1,p}(G^{-1}(-\infty,0),\nu)$.
\end{defn}

Observe that by \cite[Proposition 4.8]{CL14} $w\rho=w_{|_{G^{-1}(0)}}\rho$. An obvious consequence of the definition is the following Corollary.

\begin{cor}\label{Trace continuity}
Let $p\geq \frac{t}{t-s'}$. The operator
\[\trace_{G^{-1}(0)}:W^{1,p}(G^{-1}(-\infty,0),\nu)\longra\elle^q(X,w\rho)\]
is continuous for every $q\in\left[1,p\frac{t-s'}{t}\right]$.
\end{cor}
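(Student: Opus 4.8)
The plan is to reduce the corollary to the uniform a priori estimate
\[
\Big(\int_{G^{-1}(0)}\abs{\varphi}^q\,w\,d\rho\Big)^{1/q}\le C_q\,\norm{\varphi}_{W^{1,p}(G^{-1}(-\infty,0),\nu)},\qquad \varphi\in\lip_b(G^{-1}(-\infty,0)),
\]
with $C_q$ independent of $\varphi$, and then to pass to the limit along an approximating sequence as in Definition~\ref{Trace definition}. Recall that $w\in W^{1,s}(X,\mu)\subseteq\elle^1(X,\mu)$, so $\nu$ is a finite measure; this is used repeatedly.

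First I would establish the estimate for $\varphi\in\lip_b(G^{-1}(-\infty,0))$. Applying the second identity of Proposition~\ref{traces estimates} to $\varphi$ itself, its right-hand side equals $\int_{G^{-1}(0)}\abs{\varphi}^q\,w\,d\rho$, so it suffices to bound the two integrals on the left. For the first one, $\abs{q\,\varphi\,\abs{\varphi}^{q-2}\gen{\nabla_H\varphi,\frac{\nabla_H G}{\abs{\nabla_H G}_H}}_H}\le q\,\abs{\varphi}^{q-1}\abs{\nabla_H\varphi}_H$, and since $q\le p$ and $\nu$ is finite, H\"older's inequality (with exponents $p/(q-1)$ and $p/(p-q+1)$ when $q>1$, and trivially when $q=1$) bounds its integral by a constant times $\norm{\varphi}_{\elle^p(X,\nu)}^{q-1}\norm{\nabla_H\varphi}_{\elle^p(X,\nu;H)}\le\norm{\varphi}_{W^{1,p}(G^{-1}(-\infty,0),\nu)}^q$. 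For the second integral, H\"older's inequality with exponents $p/(p-q)$ and $p/q$ shows it is bounded by $\norm{\diver_\nu(\nabla_H G/\abs{\nabla_H G}_H)}_{\elle^{p/(p-q)}(X,\nu)}\norm{\varphi}_{\elle^p(X,\nu)}^q$, so everything comes down to the integrability $\diver_\nu(\nabla_H G/\abs{\nabla_H G}_H)\in\elle^{p/(p-q)}(X,\nu)$. By Proposition~\ref{divergence with mu} together with Hypothesis~\ref{ipotesi2} (which, exactly as in the proofs of Propositions~\ref{traces estimates} and~\ref{constistenza definizione traccia}, puts $\nabla_H G/\abs{\nabla_H G}_H$ in $W^{1,q'}(X,\mu;H)$ for every $q'>1$), this divergence lies in $\elle^r(X,\nu)$ for every $r\in[1,t/s')$; and for $q<p\,\tfrac{t-s'}{t}$ a direct computation gives $p/(p-q)<t/s'$, so the claimed integrability holds.

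I expect the main obstacle to be the borderline exponent $q=p\,\tfrac{t-s'}{t}$, where $p/(p-q)=t/s'$ lies precisely at the endpoint excluded by Proposition~\ref{divergence with mu}. To handle it I would exploit that $\abs{\nabla_H G/\abs{\nabla_H G}_H}_H\equiv 1$: then $\gen{\nabla_H G/\abs{\nabla_H G}_H,\nabla_H\log w}_H$ is dominated pointwise by $\abs{\nabla_H\log w}_H\in\elle^t(X,\mu)$, and since $\diver_\mu(\nabla_H G/\abs{\nabla_H G}_H)\in\elle^{q'}(X,\mu)$ for every $q'>1$, formula~\eqref{Formula divergenza} together with a H\"older estimate against $w\in\elle^s(X,\mu)$ yields $\diver_\nu(\nabla_H G/\abs{\nabla_H G}_H)\in\elle^{t/s'}(X,\nu)$, which is exactly what the endpoint requires. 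This completes the a priori estimate, with $C_q$ depending only on $p,q,s',t$, on $\nu(X)$, and on $\norm{\diver_\nu(\nabla_H G/\abs{\nabla_H G}_H)}_{\elle^{p/(p-q)}(X,\nu)}$.

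Finally, given $\varphi\in W^{1,p}(G^{-1}(-\infty,0),\nu)$ and a sequence $(\varphi_n)_{n\in\N}\subseteq\lip_b(G^{-1}(-\infty,0))$ converging to it in $W^{1,p}(G^{-1}(-\infty,0),\nu)$, I would apply the estimate to each $\varphi_n$ and let $n\to+\infty$: the left-hand sides tend to $\norm{\trace_{G^{-1}(0)}\varphi}_{\elle^q(X,w\rho)}$ by the definition of the trace, and the right-hand sides tend to $C_q\norm{\varphi}_{W^{1,p}(G^{-1}(-\infty,0),\nu)}$. As $\trace_{G^{-1}(0)}$ is linear by construction, this boundedness gives its continuity on the whole stated range $q\in[1,p\,\tfrac{t-s'}{t}]$.
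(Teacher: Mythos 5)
Your proof is correct and follows essentially the same route as the paper: the corollary is stated there as an ``obvious consequence of the definition'', the substance being exactly the a priori bound you derive from the second identity of Proposition \ref{traces estimates} (the same identity that drives Proposition \ref{constistenza definizione traccia}), followed by passage to the limit along a Lipschitz approximating sequence. Your explicit treatment of the endpoint $q=p\frac{t-s'}{t}$, where $p/(p-q)=t/s'$ falls just outside the range covered by Proposition \ref{divergence with mu}, is in fact more careful than the paper, which does not comment on this borderline case.
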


\begin{pro}\label{Trace product}
Let $a>1$ and $b>1$. If $\frac{ab}{a+b}\geq\frac{t}{t-s'}$, then for every $\varphi\in W^{1,a}(X,\nu)$ and $\psi\in W^{1,b}(X,\nu)$ we have
\[\trace_{G^{-1}(0)}(\varphi\psi)=\trace_{G^{-1}(0)}(\varphi)\trace_{G^{-1}(0)}(\psi)\qquad \rho\text{-a.e.}\]
\end{pro}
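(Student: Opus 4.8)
The plan is to reduce the statement for general Sobolev functions to the obvious case of bounded Lipschitz functions by an approximation argument, using the continuity of the trace operator established in Corollary \ref{Trace continuity} together with the product rule from part \eqref{prodotto funzioni in W} of Proposition \ref{proprerties of sobolev space}. First I would pick sequences $(\varphi_n)_{n\in\N},(\psi_n)_{n\in\N}\subseteq\lip_b(G^{-1}(-\infty,0))$ converging to $\varphi$ in $W^{1,a}(X,\nu)$ and to $\psi$ in $W^{1,b}(X,\nu)$ respectively; since the product of two bounded Lipschitz functions is again bounded Lipschitz, $\varphi_n\psi_n\in\lip_b(G^{-1}(-\infty,0))$ and trivially $(\varphi_n\psi_n)_{|_{G^{-1}(0)}}=\varphi_{n_{|_{G^{-1}(0)}}}\psi_{n_{|_{G^{-1}(0)}}}$ pointwise on $G^{-1}(0)$.

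The key step is then to verify that $\varphi_n\psi_n$ converges to $\varphi\psi$ in $W^{1,r}(X,\nu)$ for $r=\tfrac{ab}{a+b}$ (which is $\geq\tfrac{t}{t-s'}$ by hypothesis, so the trace is defined at exponent $r$). This follows from Proposition \ref{proprerties of sobolev space}\eqref{prodotto funzioni in W}: writing $\varphi_n\psi_n-\varphi\psi=(\varphi_n-\varphi)\psi_n+\varphi(\psi_n-\psi)$ and $\nabla_H(\varphi_n\psi_n)-\nabla_H(\varphi\psi)=(\psi_n\nabla_H\varphi_n-\psi\nabla_H\varphi)+(\varphi_n\nabla_H\psi_n-\varphi\nabla_H\psi)$, one controls each term by H\"older's inequality with exponents $a/r$ and $b/r$ and their conjugates, using that $(\varphi_n)$ is bounded in $W^{1,a}(X,\nu)$ and $(\psi_n)$ is bounded in $W^{1,b}(X,\nu)$ (being convergent), that $\varphi_n\to\varphi$ and $\psi_n\to\psi$ in the respective norms, and that $\tfrac{1}{r}=\tfrac{1}{a}+\tfrac{1}{b}$. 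Once $\varphi_n\psi_n\to\varphi\psi$ in $W^{1,r}(X,\nu)$, the definition of $\trace_{G^{-1}(0)}$ gives $\trace_{G^{-1}(0)}(\varphi\psi)=\lim_n(\varphi_n\psi_n)_{|_{G^{-1}(0)}}=\lim_n\varphi_{n_{|_{G^{-1}(0)}}}\psi_{n_{|_{G^{-1}(0)}}}$ in $\elle^q(G^{-1}(0),w\rho)$ for every $q\in[1,r\tfrac{t-s'}{t}]$.

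It remains to identify this limit with the product of the two traces. By Corollary \ref{Trace continuity}, $\varphi_{n_{|_{G^{-1}(0)}}}\to\trace_{G^{-1}(0)}\varphi$ in $\elle^{q_a}(G^{-1}(0),w\rho)$ for $q_a\leq a\tfrac{t-s'}{t}$ and $\psi_{n_{|_{G^{-1}(0)}}}\to\trace_{G^{-1}(0)}\psi$ in $\elle^{q_b}(G^{-1}(0),w\rho)$ for $q_b\leq b\tfrac{t-s'}{t}$; choosing $q_a,q_b$ conjugate-compatible with a target exponent $q\leq r\tfrac{t-s'}{t}=\tfrac{t-s'}{t}\cdot\tfrac{ab}{a+b}$ (so that $\tfrac{1}{q}\geq\tfrac{1}{q_a}+\tfrac{1}{q_b}$ is achievable), H\"older's inequality shows $\varphi_{n_{|_{G^{-1}(0)}}}\psi_{n_{|_{G^{-1}(0)}}}\to(\trace_{G^{-1}(0)}\varphi)(\trace_{G^{-1}(0)}\psi)$ in $\elle^q(G^{-1}(0),w\rho)$. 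Comparing the two limits yields the claimed equality $\rho$-a.e. (equivalently $w\rho$-a.e., since $w>0$).

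The main obstacle I anticipate is purely bookkeeping of exponents: one must check that the target integrability exponent $q$ where both limits live can be taken positive, i.e. that $r\tfrac{t-s'}{t}\geq 1$ is not actually needed — it suffices that the set of admissible $q$ for the $W^{1,r}$-trace (namely $[1,r\tfrac{t-s'}{t}]$ when nonempty, and in any case $q=1$ works since $r\geq\tfrac{t}{t-s'}$ forces $r\tfrac{t-s'}{t}\geq 1$) is compatible with the H\"older splitting coming from the individual traces. Since $a,b>1$ and $\tfrac{ab}{a+b}\geq\tfrac{t}{t-s'}$, one has $a\tfrac{t-s'}{t}>1$ and $b\tfrac{t-s'}{t}>1$, so there is room to pick $q_a,q_b>1$ with $\tfrac{1}{q_a}+\tfrac{1}{q_b}=\tfrac{t}{t-s'}\cdot\tfrac{a+b}{ab}\cdot\tfrac{1}{\,\cdot\,}\leq 1$; making this precise is the only delicate point, and it is elementary.
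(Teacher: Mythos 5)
Your proposal is correct and follows essentially the same route as the paper's proof: approximate $\varphi$ and $\psi$ by bounded Lipschitz functions, show $\varphi_n\psi_n\to\varphi\psi$ in $W^{1,\frac{ab}{a+b}}(G^{-1}(-\infty,0),\nu)$ so that $\trace_{G^{-1}(0)}(\varphi\psi)$ is the limit of the restricted products, and then identify that limit with $\trace_{G^{-1}(0)}(\varphi)\trace_{G^{-1}(0)}(\psi)$ via H\"older's inequality applied to the individual traces. The paper compresses the two steps you elaborate (the product convergence and the exponent bookkeeping) into ``a standard argument'' and a single H\"older application in $\elle^1(G^{-1}(0),w\rho)$, but the substance is identical.
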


\begin{proof}
Let $(\varphi_n)_{n\in\N},(\psi_n)_{n\in\N}\subseteq \lip_b(G^{-1}(-\infty,0))$ such that
\begin{gather*}
\lim_{n\ra+\infty}\varphi_n=\varphi\qquad\text{ in }W^{1,a}(G^{-1}(-\infty,0),\nu);\\
\lim_{n\ra+\infty}\psi_n=\psi\qquad\text{ in }W^{1,b}(G^{-1}(-\infty,0),\nu).
\end{gather*}
By a standard argument we know $\varphi_n\psi_n\in\lip_b(G^{-1}(-\infty,0))$, for every $n\in\N$, and
\[\lim_{n\ra+\infty}\varphi_n\psi_n=\varphi\psi\qquad\text{ in }W^{1,\frac{ab}{a+b}}(G^{-1}(-\infty,0),\nu).\]
So we have
\[\trace_{G^{-1}(0)}(\varphi\psi)=\lim_{n\ra+\infty}\varphi_{n_{|_{G^{-1}(0)}}}\psi_{n_{|_{G^{-1}(0)}}}\quad\text{in }\elle^{q}(G^{-1}(0),w\rho)\text{ for every } q\in\left[1,\frac{ab}{a+b}\frac{t-s'}{t}\right].\]
Using H\"older inequality we get
\begin{gather*}
\int_X|\varphi_{n_{|_{G^{-1}(0)}}}\psi_{n_{|_{G^{-1}(0)}}}-\trace_{G^{-1}(0)}(\varphi)\trace_{G^{-1}(0)}(\psi)|wd\rho\xra{n\ra+\infty}0.
\end{gather*}
So $\trace_{G^{-1}(0)}(\varphi\psi)=\trace_{G^{-1}(0)}(\varphi)\trace_{G^{-1}(0)}(\psi)$ $\rho$-a.e.
\end{proof}

\begin{pro}\label{trace and precise version}
Let $p\geq\frac{t}{t-s'}$. For every $\varphi\in W^{1,p}(G^{-1}(-\infty,0),\nu)$ we have
\[\trace_{G^{-1}(0)}\varphi(x)=\ol{\varphi}_{|_{G^{-1}(0)}}(x)\qquad\rho\text{-a.e.}\]
for every $(1,p)$-precise version $\ol{\varphi}$ of $\varphi$.
\end{pro}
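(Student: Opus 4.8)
The plan is to reduce the statement to an approximation argument combining the definition of the trace (Definition \ref{Trace definition}) with the capacity-theoretic properties of precise versions recalled in Section \ref{Notations and preliminaries}. First I would pick a sequence $(\varphi_n)_{n\in\N}\subseteq\lip_b(G^{-1}(-\infty,0))$ converging to $\varphi$ in $W^{1,p}(G^{-1}(-\infty,0),\nu)$; by Proposition \ref{constistenza definizione traccia} we have $\varphi_{n_{|_{G^{-1}(0)}}}\to\trace_{G^{-1}(0)}\varphi$ in $\elle^q(G^{-1}(0),w\rho)$ for suitable $q$. On the other hand, after extending each $\varphi_n$ by its McShane extension to a Lipschitz, hence $W^{1,q}(X,\nu)$, function on all of $X$, I would like to say that the same sequence converges in $W^{1,q}(X,\mu)$ for some $q>ps'$ — using Proposition \ref{proprerties of sobolev space}\eqref{W(nu) in W(mu)} if the weight is good enough, or more robustly by first choosing the approximating sequence inside $\fcon^\infty_b(X)$ via the standard mollification that underlies the density statements — so that $\ol{\varphi}$, being a $(1,p)$-precise version, is the $C_{1,p}$-quasi-everywhere pointwise limit (along a subsequence) of the precise versions of the $\varphi_n$.

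The key step is then to combine these two convergences on $G^{-1}(0)$. The point is that $\rho$ does not charge sets of null $C_{1,p}$-capacity: this is exactly the content of the Feyel--de La Pradelle construction together with the capacity estimates in \cite[Theorem 5.9.6]{Bog98} and the surface-measure estimates of \cite{Fey01,CL14}, and it is implicitly used throughout Section \ref{Traces}. Granting that, the pointwise $C_{1,p}$-quasi-everywhere convergence $\ol{\varphi_n}\to\ol{\varphi}$ gives $\ol{\varphi_n}_{|_{G^{-1}(0)}}\to\ol{\varphi}_{|_{G^{-1}(0)}}$ $\rho$-a.e. along a subsequence; since the $\varphi_n$ are continuous, $\ol{\varphi_n}_{|_{G^{-1}(0)}}=\varphi_{n_{|_{G^{-1}(0)}}}$, and passing to a further subsequence along which the $\elle^q(G^{-1}(0),w\rho)$-convergence is also pointwise $\rho$-a.e., both limits must agree $\rho$-a.e., which is the claim. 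Here I would also invoke $w\rho=w_{|_{G^{-1}(0)}}\rho$ (as recalled after Definition \ref{Trace definition}) and $w>0$ $\rho$-a.e. to pass freely between $\rho$-a.e.\ and $w\rho$-a.e.\ statements.

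The main obstacle I anticipate is the passage from convergence in $W^{1,p}(G^{-1}(-\infty,0),\nu)$ to a convergence of \emph{precise} versions that is strong enough to control behaviour on the boundary $G^{-1}(0)$, which is a $\nu$-null set. One cannot argue purely within $\nu$; one must move to the Gaussian Sobolev scale $W^{1,q}(X,\mu)$ where the capacity and precise-version theory lives. This requires (i) producing an approximating sequence that converges not only in $W^{1,p}(\cdot,\nu)$ but also in some $W^{1,q}(X,\mu)$ — for which I would lean on Proposition \ref{proprerties of sobolev space}\eqref{W(nu) in W(mu)} under the relevant integrability of $w$, or else build the sequence from $\fcon^\infty_b(X)$ functions which automatically have well-behaved precise versions — and (ii) checking that the $C_{1,p}$-capacity is fine enough that $\rho$-null sets are captured, i.e.\ that $\rho\ll$ "$C_{1,p}$-capacity" on $G^{-1}(0)$, using Hypothesis \ref{ipotesi2}\eqref{Bastaaaaa} exactly as in \cite{CL14}. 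Once these two ingredients are in place, the rest is a routine diagonal-subsequence argument, so I would present (i) and (ii) carefully and keep the final identification brief.
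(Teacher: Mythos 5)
Your diagnosis of the difficulty is exactly right --- everything hinges on controlling behaviour on the $\nu$-null set $G^{-1}(0)$ --- but the route you propose has a genuine gap, and the paper resolves the issue in a completely different and much shorter way. The missing idea is that no quasi-everywhere convergence of precise versions is needed: the paper applies the Gauss--Green identity of Proposition \ref{traces estimates} with $q=1$ directly to the difference $\ol{\varphi}-\varphi_n$, which gives
\begin{gather*}
\int_{G^{-1}(0)}\abs{\ol{\varphi}_{|_{G^{-1}(0)}}-\varphi_n}\abs{\nabla_H G}_H\,w\,d\rho=\\
=\int_{G^{-1}(-\infty,0)}\pa{\sign(\ol{\varphi}-\varphi_n)\gen{\nabla_H(\ol{\varphi}-\varphi_n),\nabla_H G}_H-\abs{\ol{\varphi}-\varphi_n}\diver_\nu\nabla_H G}d\nu.
\end{gather*}
The right-hand side involves only interior integrals against $\nu$ and tends to $0$ by the assumed convergence $\varphi_n\to\varphi$ in $W^{1,p}(G^{-1}(-\infty,0),\nu)$, H\"older's inequality and the integrability of $\diver_\nu\nabla_H G$ (Proposition \ref{divergence with mu}). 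Since $\abs{\nabla_H G}_H>0$ $\rho$-a.e.\ on $G^{-1}(0)$ by Hypothesis \ref{ipotesi2}\eqref{Bastaaaaa}, the left-hand side forces $\varphi_{n_{|_{G^{-1}(0)}}}\to\ol{\varphi}_{|_{G^{-1}(0)}}$ $\rho$-a.e.\ along a subsequence, and comparison with Definition \ref{Trace definition} concludes. All the capacity-theoretic input is already packaged inside Proposition \ref{traces estimates}, i.e.\ in \cite[Proposition 4.1]{CL14}.

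By contrast, your step (i) does not go through under the standing hypotheses. The embedding $W^{1,p}(\cdot,\nu)\hookrightarrow W^{1,pr}(\cdot,\mu)$ of Proposition \ref{proprerties of sobolev space}\eqref{W(nu) in W(mu)} requires $\int_X w^{r(r-1)^{-1}}d\mu<+\infty$ for some $r\in(0,1)$, i.e.\ integrability of a negative power of $w$, which is \emph{not} part of Hypothesis \ref{ipotesi1}; for a general admissible weight, convergence in $W^{1,p}(G^{-1}(-\infty,0),\nu)$ gives no control in any Gaussian Sobolev norm. The fallback of choosing the $\varphi_n$ in $\fcon^\infty_b(X)$ does not repair this: the approximants converge only on the open sublevel set and only with respect to $\nu$, so quasi-everywhere convergence of their precise versions \emph{on the boundary} $G^{-1}(0)$ is precisely the assertion to be proved, not an available input. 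Your ingredient (ii) --- that $\rho$ does not charge $C_{1,p}$-null sets --- is a true theorem of Feyel--de La Pradelle, but it is nowhere established in this paper and cannot compensate for the failure of (i). I would replace the capacity argument by the one-display application of Proposition \ref{traces estimates} above.
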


\begin{proof}
Let $(\varphi_n)_{n\in\N}$ a sequence of bounded Lipschitz functions defined on $G^{-1}(-\infty,0)$ which satisties the condition of the Definition \ref{Trace definition}. By Proposition \ref{traces estimates} we get
\begin{gather*}
\int_{G^{-1}(0)}\abs{\ol{\varphi}_{|_{G^{-1}(0)}}-\varphi_n}\abs{\nabla_H G}_Hwd\rho=\\
=\int_{G^{-1}(-\infty,0)}\pa{\text{sign}(\ol{\varphi}-\varphi_n)\gen{\nabla_H(\ol{\varphi}-\varphi_n),\nabla_H G}_H-\abs{\ol{\varphi}-\varphi_n}\diver_\nu\nabla_H G}d\nu.
\end{gather*}
Letting $n\ra+\infty$ we get $\trace_{G^{-1}(0)}\varphi=\ol{\varphi}_{|_{G^{-1}(0)}}$ $\rho$-a.e.
\end{proof}

We are now in a position to prove Theorem \ref{divergence theorem with traces}.

\begin{proof}[Proof of Theorem \ref{divergence theorem with traces}]
The statement follows by Proposition \ref{Divergence without trace} and Proposition \ref{trace and precise version}. The furthermore part follows by Proposition \ref{Trace product}.
\end{proof}

\noindent Using the same argument we can extend  Proposition \ref{traces estimates} to functions in $W^{1,p}(G^{-1}(-\infty,0),\nu)$.

\begin{pro}\label{traces estimates for W1p}
Let $p\geq \frac{t}{t-s'}$ and $1\leq q\leq p\frac{t-s'}{t}$. If $\varphi\in W^{1,p}(G^{-1}(-\infty,0),\nu)$ then
\begin{gather*}
\int_{G^{-1}(-\infty,0)}\pa{q\varphi\abs{\varphi}^{q-2}\gen{\nabla_H\varphi,\nabla_H G}_H-\abs{\varphi}^q \diver_\nu \nabla_H G}d\nu=\\
=\int_{G^{-1}(0)}\trace_{G^{-1}(0)}\abs{\varphi}^q\trace_{G^{-1}(0)}\abs{\nabla_H G}_Hwd\rho,
\end{gather*}
and
\[\int_{G^{-1}(-\infty,0)}\pa{q\varphi\abs{\varphi}^{q-2}\gen{\nabla_H\varphi,\frac{\nabla_H G}{\abs{\nabla_H G}_H}}+\diver_\nu\frac{\nabla_H G}{\abs{\nabla_H G}_H}\abs{\varphi}^q}d\nu=\int_{G^{-1}(0)}(\trace_{G^{-1}(0)}\abs{\varphi}^q)wd\rho.\]
\end{pro}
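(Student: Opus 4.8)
The plan is to extend the identities of Proposition \ref{traces estimates}, which are already established for $\varphi\in\lip_b(G^{-1}(-\infty,0))$, to a general $\varphi\in W^{1,p}(G^{-1}(-\infty,0),\nu)$ by a density-and-convergence argument. Fix a sequence $(\varphi_n)_{n\in\N}\subseteq\lip_b(G^{-1}(-\infty,0))$ with $\varphi_n\to\varphi$ in $W^{1,p}(G^{-1}(-\infty,0),\nu)$; up to passing to a subsequence we may assume $\varphi_n\to\varphi$ also $\nu$-a.e. on $G^{-1}(-\infty,0)$. For each $n$ Proposition \ref{traces estimates} gives the two identities with $\varphi$ replaced by $\varphi_n$, and by Definition \ref{Trace definition} the right-hand sides converge: $\varphi_{n|_{G^{-1}(0)}}\to\trace_{G^{-1}(0)}\varphi$ in $\elle^q(G^{-1}(0),w\rho)$ for $q\le p\frac{t-s'}{t}$, hence $|\varphi_n|^q_{|_{G^{-1}(0)}}\to\trace_{G^{-1}(0)}|\varphi|^q$ in $\elle^1(G^{-1}(0),w\rho)$ (using $|a|^q-|b|^q$ bounds and Hölder), while the factor $\abs{\nabla_H G}_H$ on $G^{-1}(0)$ can be absorbed by Proposition \ref{Trace product} together with Hypothesis \ref{ipotesi2}\eqref{Bastaaaaa}, which guarantees $\abs{\nabla_H G}_H\in W^{1,b}(X,\nu)$ on a neighborhood of $G^{-1}(0)$ for all $b$, so its trace is well-defined and the product $\trace|\varphi|^q\trace\abs{\nabla_H G}_H$ is the limit of $|\varphi_n|^q\abs{\nabla_H G}_{H|_{G^{-1}(0)}}$.

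The work is then on the left-hand sides. First I would show $\abs{\varphi}^q\in W^{1,r}(X,\nu)$ (on the sublevel set) for a suitable $r\ge\frac{t}{t-s'}$: combine Proposition \ref{C1 func} applied to a $C^1_b$ approximation of $t\mapsto|t|^q$ (or Proposition \ref{derivate of modulus} for the modulus) with the product rule \eqref{prodotto funzioni in W} of Proposition \ref{proprerties of sobolev space}, and note the exponents close up because $q\le p\frac{t-s'}{t}$, so $q\varphi|\varphi|^{q-2}\nabla_H\varphi\in\elle^{p/(q-1)\cdot\ldots}$ pairs integrably against $\nabla_H G\in\elle^{b}$ for every $b$. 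For the divergence terms, by Hypothesis \ref{ipotesi2}\eqref{Bastaaaaa} and Proposition \ref{divergence with mu} one has $\diver_\nu\nabla_H G\in\elle^{\sigma}(G^{-1}(-\delta,\delta),\nu)$ for a large exponent $\sigma$, and likewise $\diver_\nu\frac{\nabla_H G}{\abs{\nabla_H G}_H}$, exactly as was used in Proposition \ref{constistenza definizione traccia}; this makes $\int|\varphi_n-\varphi|^q|\diver_\nu\nabla_H G|\,d\nu\to0$ by Hölder. Similarly $\int|q\varphi_n|\varphi_n|^{q-2}\nabla_H\varphi_n-q\varphi|\varphi|^{q-2}\nabla_H\varphi|\,|\nabla_H G|\,d\nu\to0$, splitting off the $\nabla_H\varphi_n\to\nabla_H\varphi$ term and the $\varphi_n|\varphi_n|^{q-2}\to\varphi|\varphi|^{q-2}$ term (the latter by dominated convergence together with the uniform $W^{1,p}$ bound, as in the proof of Proposition \ref{trace and precise version}). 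Passing to the limit in the two identities for $\varphi_n$ yields the claim.

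The main obstacle I anticipate is purely bookkeeping of exponents: one must verify that with $1\le q\le p\frac{t-s'}{t}$ all the products appearing — $\varphi|\varphi|^{q-2}\nabla_H\varphi$ against $\nabla_H G$, and $|\varphi|^q$ against $\diver_\nu\nabla_H G$ and against $\diver_\nu(\nabla_H G/\abs{\nabla_H G}_H)$ — lie in $\elle^1(G^{-1}(-\infty,0),\nu)$ and that the differences converge to zero, which forces repeated use of Hypothesis \ref{ipotesi2}\eqref{Bastaaaaa} to get $\abs{\nabla_H G}_H^{-1}\in\elle^{\text{any}}$ near $G^{-1}(0)$ so that divergences have arbitrarily high integrability there. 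A secondary subtlety is that $|t|^q$ is not $C^1_b$ globally (nor at $0$ when $1\le q<1$, but here $q\ge1$), so one approximates by truncation and smoothing and uses Proposition \ref{proprerties of sobolev space}\eqref{Banach Saks} and Proposition \ref{derivate of modulus} to identify $\nabla_H|\varphi|^q=q\varphi|\varphi|^{q-2}\nabla_H\varphi$; this is the same device already invoked in Proposition \ref{traces estimates for W1p}'s finite-dimensional predecessor in \cite{CL14}, so no genuinely new idea is needed.
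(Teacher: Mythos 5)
Your proposal is correct and follows exactly the route the paper intends: the paper gives no separate proof of this proposition, stating only that it follows ``using the same argument'' as Theorem \ref{divergence theorem with traces}, i.e.\ approximating $\varphi$ by bounded Lipschitz functions, applying Proposition \ref{traces estimates} to each approximant, and passing to the limit using the definition of the trace, Proposition \ref{Trace product}, H\"older's inequality, and the integrability of the divergence terms from Proposition \ref{divergence with mu} and Hypothesis \ref{ipotesi2}. Your exponent bookkeeping and the identification $\nabla_H|\varphi|^q=q\varphi|\varphi|^{q-2}\nabla_H\varphi$ are precisely the details the paper leaves implicit.
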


\section{Examples}\label{Examples}

In this section we show how our results may be applied to some explicit examples. Recall that by $\partial_i f$ we denote the partial derivative of $f$ along the direction $e_i\in H$ (see Section \ref{Notations and preliminaries}). We will be interested in two types of surfaces:
\begin{description}
\item[Unit sphere] Let $S(x)=\norm{x}_X-1$. We will prove that $S$ satisfies Hypothesis \ref{ipotesi2} in our examples.

\item[Hyperplanes] Let $f\in X^*\ssm\set{0}$ and $G_f(x)=f(x)$. Observe that 
$\partial_i G_f(x)=f(e_i)$ for every $i\in\N$ and $\partial_i\partial_j G_f(x)=0$ for every $i,j\in\N$. Furthermore
\[G_f\in\bigcap_{p>1} W^{2,p}(X,\mu)\quad\text{ and }\quad\frac{1}{\abs{\nabla_H G_f}_H}\in\bigcap_{p>1}\elle^p(X,\mu).\]
Thus $G_f$ satifies Hypothesis \ref{ipotesi2}, for every $f\in X^*\ssm\set{0}$.
\end{description}

\subsection{A Gaussian-type weight in Hilbert spaces}\label{esempio spazio hilbert}

Let $X$ be a separable Hilbert space endowed with a nondegenerate centered Gaussian measure $\mu$, with covariance $Q$. Fix an orthonormal basis $(v_n)_{n\in\N}$ of $X$ of eigenvectors of $Q$, i.e. $Qv_k=\lambda_k v_k$, and the corresponding orthonormal basis of $H=Q^{\frac{1}{2}}(X)$ is $\{e_n:=\sqrt{\lambda_n}v_n\}_{n\in\N}$. 

Let $w_\lambda(x)=e^{\lambda(x,x)_X}$ for $\lambda\in\R$. Easy calculation gives
\[\partial_i w_\lambda(x)=2\lambda(x,e_i)_Xe^{\lambda(x,x)_X},\quad\partial_i\log w_\lambda(x)=2\lambda(x,e_i)_X,\quad\partial_j\partial_i\log w_\lambda(x)=2\lambda(e_j,e_i)_X.\]
Let 
\[\alpha:=\sup\set{\eta>0\tc\int_X e^{\eta(x,x)_X}d\mu(x)<+\infty}.\]
By Fernique's theorem (see \cite[Theorem 2.8.5]{Bog98}) the set $\set{\eta>0\tc\int_X e^{\eta(x,x)_X}d\mu(x)<+\infty}$ is not empty and $\alpha$ is strictly positive.  Furthermore
\begin{gather*}
\int_X e^{\eta(x,x)_X}d\mu(x)=\int_X e^{\eta\sum_{i=1}^{+\infty}(x,v_i)_X^2}d\mu(x)=\lim_{n\ra+\infty}\int_X e^{\eta\sum_{i=1}^{n}(x,v_i)_X^2}d\mu(x)=\\
=\lim_{n\ra+\infty}\prod_{i=1}^n\sqrt{\frac{\lambda_i}{2\pi}}\int_{\R}e^{(\eta-\frac{\lambda_i}{2})\xi^2}d\xi,
\end{gather*}
and the last limit diverges if $\eta\geq 2\lambda_1$. Thus $0<\alpha<2\lambda_1$ and $w_\lambda\in W^{1,s}(X,\mu)$ for every $2\lambda<\alpha$ whenever $\lambda(s+1)\leq \alpha$. Furthermore $\log w_\lambda\in W^{2,t}(X,\mu)$ for every $t>1$ and $2\lambda<\alpha$.
In both cases it is possible to define $W^{1,p}(X,\nu)$ for every $p>1$ (see Definition \ref{Weightes Sobolev space definition}).

The above observation gives also that $S$ satisfies Hypothesis \ref{ipotesi2}. In this case $w_\lambda\rho=e^\lambda\rho$, so all the remarks about continuity of the trace operator in $\elle^p(S^{-1}(0),\rho)$ stated in \cite[Section 5.3]{CL14} still hold. 

By Corollary \ref{Trace continuity}, the trace operator $\trace_{G_f^{-1}(0)}:W^{1,p}(G_f^{-1}(-\infty,0),\nu_\lambda)\ra\elle^q(G_f^{-1}(0),w_\lambda\rho)$ for every $q\in [1,p)$ and $f\in X^*\ssm\set{0}$. Furthermore, using a similar argument as in Proposition \ref{proprerties of sobolev space}, we get $\trace_{G_f^{-1}(0)}\varphi\in\elle^q(G_f^{-1}(0),\rho)$ for every $q\in[1,p)$ and every $\varphi\in W^{1,p}(G^{-1}_f(-\infty,0),\nu)$. 

\subsection{A weight without continuous versions}\label{esempio ell2}

Let $X=\ell_2$ the Banach space of square summable sequences and let $(v_k)_{k\in\N}$ its standard orthonormal basis, i.e. $v_k$ is the sequence such that $v_k(i)=\delta_{ik}$ for every $i,k\in\N$. Let $\mu$ be a centered non-degenerate Gaussian measure on $\ell_2$ with covariance operator $Q:\ell_2\ra\ell_2$ defined by
\[Q(x)=\pa{\frac{x(i)}{2^i}}.\]
Such a measure exists, e.g. by \cite[Theorem 2.3.1]{Bog98}. The eigenvectors of $Q$ are the vectors $v_k$ with respective eigenvalues $2^{-k}$.
We will denote by $\{e_n:=v_n/\sqrt{2^n}\}$ the basis of the Cameron--Martin space associated with $\mu$.

The weight we want to study is $w_q(x)=e^{\norm{x}_q}$ for fixed $q> 1$. The first result we need is the fact that $w_q$ (actually $\norm{\cdot}_q$) is defined $\mu$-a.e. and in order to show that we need a modification of Fernique's theorem (see \cite[Theorem 2.8.5]{Bog98}). Let's start with a definition:

\begin{defn}
Let $\gamma$ be a Gaussian measure on a separable Banach space $X$ and $p\in(0,1]$. A function $g$ measurable with respect to $\borel(X)$ (the Borel $\sigma$-algebra of $X$) is called \emph{$\borel(X)$-measurable $p$-seminorm} if there exists a $\borel(X)$-measurable linear subspace $X_0\subseteq X$ of $\gamma$-measure $1$ such that $g$ is a $p$-seminorm on $X_0$, i.e. $g(x+y)\leq g(x)+g(y)$ for every $x,y\in X_0$ and $g(\lambda x)=\abs{\lambda}^pg(x)$ for every $x\in X_0$ and $\lambda\in\R$.
\end{defn}
Our definition differs from the definition of measurable norm of the Cameron--Martin
space with the cylindrical Wiener measure in the sense of Gross (see \cite[Definition 3.9.2]{Bog98}) since we also consider $p$-seminorm, with $0<p<1$. Observe that our definition of $\borel(X)$-measurable 1-seminorm agrees with \cite[Definition 2.8.1]{Bog98}.

The proofs of the following two propositions are similar to the proof of Fernique's theorem. We include the proof of the first one in order to get a self-contained paper.

\begin{pro}\label{Fernique with p-norm}
Let $\mu$ be a centered Gaussian measure on a separable Banach space $X$, $p\in (0,1]$ and let $g$ be a $\borel(X)$-measurable $p$-seminorm. Then
\[\int_Xe^{\alpha g(x)^2}d\mu(x)<+\infty,\]
for some $\alpha>0$.
\end{pro}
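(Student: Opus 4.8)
The plan is to follow the classical argument for Fernique's theorem, adapting it to accommodate the weaker homogeneity property $g(\lambda x) = |\lambda|^p g(x)$ of a $p$-seminorm. First I would fix the $\borel(X)$-measurable linear subspace $X_0$ on which $g$ is a genuine $p$-seminorm and on which $\mu(X_0) = 1$, so all estimates below take place on $X_0$ and hold $\mu$-a.e. The key probabilistic input is the rotation invariance of the product Gaussian measure $\mu \otimes \mu$ on $X \times X$: if $(x,y)$ has law $\mu \otimes \mu$, then so does $\bigl(\tfrac{x-y}{\sqrt 2}, \tfrac{x+y}{\sqrt 2}\bigr)$. Applying $g$ and using the $p$-seminorm inequality together with the homogeneity constant $2^{-p/2}$, I would derive, for real numbers $\tau < t$,
\[
\mu\bigl(g \le \tau\bigr)\,\mu\bigl(g > t\bigr) \le \mu\Bigl(g(x) > \tfrac{t-\tau}{2^{1-p/2}} \text{ and } g(y) > \tfrac{t-\tau}{2^{1-p/2}}\Bigr)^{1/2}\cdot(\ldots),
\]
more precisely the standard bound $\mu(g > t)\,\mu(g \le \tau) \le \mu\bigl(g > 2^{p/2-1}(t-\tau)\bigr)^2$, obtained by noting that $g\bigl(\tfrac{x-y}{\sqrt 2}\bigr) \le 2^{-p/2}\bigl(g(x)+g(y)\bigr)$ on $X_0$, so $\{g(x) \le \tau\} \cap \{g(y) \le \tau\}$ forces $g\bigl(\tfrac{x-y}{\sqrt2}\bigr)$ and $g\bigl(\tfrac{x+y}{\sqrt2}\bigr)$ to be controlled, and running the rotation the other way.

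Next I would choose $\tau$ large enough that $\mu(g \le \tau) \ge 3/4$ (possible since $g$ is finite $\mu$-a.e., so $\mu(g \le \tau) \to 1$), and set up a recursion: defining $t_0 = \tau$ and $t_{n+1} = \tau + 2^{1-p/2} t_n$, the inequality above gives $\mu(g > t_{n+1}) \le 3\,\bigl(\mu(g > t_n)/3\bigr)^2 \cdot 3 = 3^{-1}\bigl(3\,\mu(g>t_n)\bigr)^2$ — i.e. the ratios $a_n := \mu(g > t_n)$ satisfy $a_{n+1}/c \le (a_n/c)^2$ for a suitable constant, forcing $a_n \le c\, 2^{-2^n}$. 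The recursion $t_{n+1} = \tau + 2^{1-p/2} t_n$ solves to $t_n \sim C \,(2^{1-p/2})^n$ (here one uses $p \le 1$, so $2^{1-p/2} \ge \sqrt 2 > 1$ and the $t_n$ grow geometrically), hence $t_n^2$ grows like $(2^{2-p})^n$ while $-\log a_n$ grows like $2^n$. Since $2 > 2^{2-p}$ exactly when $p > 0$, we get $a_n \le \exp(-\beta t_n^2)$ for any $\beta$ smaller than a threshold, for $n$ large.

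Finally I would convert the tail estimate into the integrability statement: write $\int_X e^{\alpha g^2}\,d\mu = \sum_{n} \int_{\{t_n < g \le t_{n+1}\}} e^{\alpha g^2}\,d\mu \le \sum_n e^{\alpha t_{n+1}^2}\,\mu(g > t_n)$, plus the bounded contribution from $\{g \le t_0\}$, and observe that since $t_{n+1}^2 \le C' t_n^2$ with $C' = (2^{1-p/2})^2 \cdot(1+o(1))$ and $\mu(g > t_n) \le \exp(-\beta t_n^2)$ with $\beta$ as above, choosing $\alpha > 0$ small enough (namely $\alpha C' < \beta$) makes the series converge geometrically. I expect the main obstacle to be bookkeeping the constants carefully so that the interplay between the homogeneity exponent $2^{-p/2}$ (which governs how fast $t_n$ grows) and the doubling of $-\log a_n$ still leaves room for a positive $\alpha$; the inequality $p > 0$ is used precisely here, and getting the direction of the inequalities right in the rotation-invariance step (which event implies which) is the delicate part. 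Everything else is the standard Fernique machinery.
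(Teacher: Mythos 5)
There is a genuine gap, and it sits precisely at the point where the statement goes beyond classical Fernique. In the rotation-invariance step your threshold $2^{p/2-1}(t-\tau)$ is the wrong (too lossy) consequence of $p$-homogeneity. The sharp consequence comes from $g(2u)=2^{p}g(u)$ together with subadditivity: from $2^{p}g(u)=g\bigl((u+v)+(u-v)\bigr)\ge g(u+v)-g(u-v)$ one gets $g(u)\ge 2^{-p}\bigl(g(u+v)-g(u-v)\bigr)$, and after rescaling by $\sqrt2$,
\[
g(u)\;\ge\;2^{-p/2}\Bigl(g\bigl(\tfrac{u+v}{\sqrt2}\bigr)-g\bigl(\tfrac{u-v}{\sqrt2}\bigr)\Bigr),
\]
so the event $\set{g(\tfrac{u-v}{\sqrt2})\le\tau,\ g(\tfrac{u+v}{\sqrt2})>t}$ forces $g(u),g(v)>(t-\tau)2^{-p/2}$, not merely $(t-\tau)2^{p/2-1}$. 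Since $2^{p/2-1}\le 2^{-p/2}$ for $p\le 1$, your inequality is still true, but it is too weak to close the argument: it dictates the recursion $t_{n+1}=\tau+2^{1-p/2}t_n$, hence $t_n^2\sim C(2^{2-p})^n$, while $-\log\mu(g>t_n)$ only doubles, i.e. grows like $2^n$. Your assertion that ``$2>2^{2-p}$ exactly when $p>0$'' is false: $2\ge 2^{2-p}$ if and only if $p\ge 1$, so for $p\in(0,1)$ one has $2^{2-p}>2$ and the series $\sum_n e^{\alpha t_{n+1}^2}\mu(g>t_n)$ diverges for every $\alpha>0$ under your bookkeeping. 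As written, your proof only covers $p=1$.

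The repair is exactly the sharper constant above, which is what the paper uses: with threshold $(t-\tau)/\sqrt{2^p}$ the recursion becomes $t_{n+1}=\tau+2^{p/2}t_n$, so $t_n=\tau(2^{p/2}-1)^{-1}(2^{p(n+1)/2}-1)$ and $t_n^2\le \tau^2(2^{p/2}-1)^{-2}(2^{n+1}+1)$ (here $p\le 1$ is used, in the direction opposite to the one you used). This is dominated by the doubling tail bound $\mu(g>t_n)\le c\,\bigl(\tfrac{1-c}{c}\bigr)^{2^n}$ with $c=\mu(g\le\tau)>\tfrac12$, and any $\alpha>0$ with $\log\tfrac{1-c}{c}+2\alpha\tau^2(2^{p/2}-1)^{-2}<0$ makes the integral finite. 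The remainder of your outline (rotation invariance of $\mu\otimes\mu$, restriction to $X_0$, the iterated inequality $\mu(g>t_{n+1})\mu(g\le\tau)\le\mu(g>t_n)^2$, and the dyadic decomposition of $\int e^{\alpha g^2}d\mu$) coincides with the paper's proof.
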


\begin{proof}
Let $t>\tau>0$. According to \cite[Proposition 2.2.10]{Bog98} we have
\begin{gather*}
\mu\pa{\set{x\in X\tc g(x)\leq\tau}}\mu\pa{\set{y\in X\tc g(y)>t}}=
\int_{\set{(x,y)\in X\times X\tc g(x)\leq \tau\text{ and }g(y)>t}}d\mu(x)d\mu(y)=\\
=\int_{\set{(u,v)\in X\times X\tc g\pa{\frac{u-v}{\sqrt{2}}}\leq \tau\text{, }g\pa{\frac{u+v}{\sqrt{2}}}>t}}d\mu(u)d\mu(v)\leq\\
\leq \int_{\set{(u,v)\in X\times X\tc g\pa{u}\geq \frac{t-\tau}{\sqrt{2^p}}\text{, }g\pa{v}>\frac{t-\tau}{\sqrt{2^p}}}}d\mu(u)d\mu(v).
\end{gather*}
The last inequality follows by $g(u)\geq 2^{-p}(g(u+v)-g(u-v))$ for every $u,v\in X$, indeed
\[\set{(u,v)\in X^2\tc g\pa{\frac{u-v}{\sqrt{2}}}\leq \tau\text{, }g\pa{\frac{u+v}{\sqrt{2}}}>t}\subseteq \set{(u,v)\in X^2\tc g\pa{u}\geq \frac{t-\tau}{\sqrt{2^p}}\text{, }g\pa{v}>\frac{t-\tau}{\sqrt{2^p}}}.\]
Therefore we get
\begin{gather*}
\mu\pa{\set{x\in X\tc g(x)\leq\tau}}\mu\pa{\set{y\in X\tc g(y)>t}}\leq\pa{\mu\set{z\in X\tc g(z)>\frac{t-\tau}{\sqrt{2^p}}}}^2
\end{gather*}
Since $g<+\infty$ $\mu$-a.e., there exists a positive number $\tau$ such that $c:=\mu\pa{\set{x\in X\tc g(x)\leq \tau}}>\frac{1}{2}$. If $c=1$ the statement holds true, indeed $\int_Xe^{\alpha g(x)^2}d\mu(x)\leq e^{\alpha\tau^2}$ for every $\alpha>0$.
Now assume $c<1$. Let
\[t_n=\tau+\sqrt{2^p}t_{n-1}\quad\text{ and }\quad t_0=\tau.\]
It is easy to verify that $t_n=\tau(\sqrt{2^p}-1)^{-1}(2^{p\frac{n+1}{2}}-1)$.
Letting $p_n:=c^{-1}\mu\pa{\set{x\in X\tc g(x)>t_n}}$, then $p_n\leq p_{n-1}^2$. By induction we get
\[\mu\pa{\set{x\in X\tc g(x)>t_n}}\leq c\pa{\frac{1-c}{c}}^{2^n}.\]
Let $\alpha> 0$. We get
\begin{gather*}
\int_X e^{\alpha g(x)^2}d\mu(x)\leq \int_{\set{x\in X\tc g(x)\leq \tau}}e^{\alpha g(x)^2}d\mu(x)+\sum_{n=0}^{+\infty}e^{\alpha t_{n+1}^2}\mu\pa{\set{x\in X\tc t_n\leq g(x)< t_{n+1}}}\leq\\
\leq ce^{\alpha \tau^2}+\sum_{n=0}^{+\infty}e^{\alpha t_{n+1}^2}\mu\pa{\set{x\in X\tc g(x)\geq t_n}}\leq ce^{\alpha \tau^2}+\sum_{n=0}^{+\infty}c\pa{\frac{1-c}{c}}^{2^n}\text{exp}\pa{\alpha\tau^2\frac{(2^{p\frac{n+1}{2}}-1)^2}{(\sqrt{2^p}-1)^2}}=\\
=ce^{\alpha \tau^2}+c\sum_{n=0}^{+\infty}\text{exp}\pa{2^n\log\pa{\frac{1-c}{c}}+\alpha\tau^2\pa{\frac{2^{p\frac{n+1}{2}}-1}{\sqrt{2^p}-1}}^2}
\end{gather*}
Observe that by the chain of inequalities $(2^{p\frac{n+1}{2}}-1)^2\leq (2^{\frac{n+1}{2}}-1)^2\leq 2^{n+1}+1$, we get 
\[2^n\log\pa{\frac{1-c}{c}}+\alpha\tau^2\pa{\frac{2^{p\frac{n+1}{2}}-1}{\sqrt{2^p}-1}}^2\leq 2^n\log\pa{\frac{1-c}{c}}+\alpha\tau^2\frac{2^{n+1}+1}{\pa{\sqrt{2^p}-1}^2}.\]
So if we choose $\alpha>0$ such that
\[\log\pa{\frac{1-c}{c}}+\frac{2\alpha\tau^2}{\pa{\sqrt{2^p}-1}^2}<0,\]
then $\int_X e^{\alpha g^2}d\mu$ is finite.
\end{proof}

\begin{pro}\label{Fernique without square}
Let $\mu$ be a centered Gaussian measure on a locally convex space $X$ and let $g$ be a measurable $1$-seminorm. Then 
$\int_Xe^{\alpha g(x)}d\mu(x)<+\infty$,
for every $\alpha>0$.
\end{pro}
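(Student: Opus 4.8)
The plan is to run the same dichotomy argument as in the proof of Proposition \ref{Fernique with p-norm}, now specialized to $p=1$, but keeping track of $e^{\alpha g}$ instead of $e^{\alpha g^2}$; it is precisely the fact that we do not square $g$ that upgrades the conclusion from ``for some $\alpha$'' to ``for every $\alpha$''.

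First I would establish, exactly as before, the two-point estimate: for $0<\tau<t$,
\[\mu\big(\{g\le\tau\}\big)\,\mu\big(\{g>t\}\big)\le\Big(\mu\Big(\big\{g>\tfrac{t-\tau}{\sqrt 2}\big\}\Big)\Big)^2,\]
which follows from \cite[Proposition 2.2.10]{Bog98} together with the elementary relations $g(\lambda a)=|\lambda|g(a)$ and $g(a+b)\ge g(a)-g(b)$, valid on the full-measure linear subspace $X_0$ on which $g$ is a $1$-seminorm (this is the $p=1$ instance of the computation in the previous proof, with $\sqrt{2^p}$ replaced by $\sqrt 2$). Since $g<+\infty$ $\mu$-a.e., I may fix $\tau$ with $c:=\mu(\{g\le\tau\})>\tfrac12$; if $c=1$ the claim is trivial, so I assume $\tfrac12<c<1$. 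Setting $t_0=\tau$ and $t_n=\tau+\sqrt 2\,t_{n-1}$, so that $t_n=\tau(\sqrt 2-1)^{-1}(2^{(n+1)/2}-1)$, the same induction yields
\[\mu\big(\{g>t_n\}\big)\le c\Big(\frac{1-c}{c}\Big)^{2^n},\qquad \log\frac{1-c}{c}<0.\]

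Next I would slice the integral at the levels $t_n$:
\[\int_X e^{\alpha g}\,d\mu\le c\,e^{\alpha\tau}+\sum_{n=0}^{+\infty}e^{\alpha t_{n+1}}\,\mu\big(\{g>t_n\}\big)\le c\,e^{\alpha\tau}+c\sum_{n=0}^{+\infty}\exp\!\Big(\alpha t_{n+1}+2^n\log\frac{1-c}{c}\Big).\]
Since $t_{n+1}\le 2\tau(\sqrt 2-1)^{-1}2^{n/2}$, the general term of the series is bounded above by $\exp\!\big((2\alpha\tau(\sqrt 2-1)^{-1})2^{n/2}+2^n\log\frac{1-c}{c}\big)$, whose exponent tends to $-\infty$ as $n\to+\infty$ regardless of the value of $\alpha$: the negative term grows like $2^n$ while the positive correction grows only like $2^{n/2}$. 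Hence the series converges, and $\int_X e^{\alpha g}\,d\mu<+\infty$ for every $\alpha>0$.

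The only substantive difference from Proposition \ref{Fernique with p-norm}, and the point where a little care is needed, is exactly this last growth comparison: in the squared version $t_{n+1}^2\sim 2^{n+1}$ competes at the same order with $2^n\log\frac{1-c}{c}$ and forces $\alpha$ to be small, whereas in the linear version there is no such constraint. The measurability bookkeeping on $X_0$ and the use of \cite[Proposition 2.2.10]{Bog98} are verbatim from the previous proof. As an alternative one can bypass the argument entirely: the classical Fernique theorem \cite[Theorem 2.8.5]{Bog98}, valid on locally convex spaces for measurable $1$-seminorms, gives $\int_X e^{\beta g^2}\,d\mu<+\infty$ for some $\beta>0$, and then the inequality $\alpha g\le\beta g^2+\alpha^2/(4\beta)$ yields $\int_X e^{\alpha g}\,d\mu\le e^{\alpha^2/(4\beta)}\int_X e^{\beta g^2}\,d\mu<+\infty$ for every $\alpha>0$.
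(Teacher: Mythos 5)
Your proof is correct and is exactly the argument the paper intends: the paper's own proof of this proposition consists of the single remark that it is ``similar to the proof of Proposition \ref{Fernique with p-norm}'', and your adaptation (the $p=1$ two-point estimate, the levels $t_n=\tau+\sqrt{2}\,t_{n-1}$, and the decisive observation that $\alpha t_{n+1}\sim 2^{n/2}$ is dominated by $2^n\log\frac{1-c}{c}$ for every $\alpha>0$) is precisely the intended computation. Your alternative derivation from the classical Fernique theorem via $\alpha g\le\beta g^2+\alpha^2/(4\beta)$ is also valid and is arguably the quickest route.
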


\begin{proof}
The proof is similar to the proof of Proposition \ref{Fernique with p-norm}.
\end{proof}

We are now going to consider a notable example.

\begin{pro}\label{norma integrabile}
Let $q>0$ and $\ell_q=\{x\in\ell_2\,|\, \sum_{i=1}^{+\infty}\abs{(x,v_i)_{\ell_2}}^q<+\infty\}$ (observe that $\ell_q=\ell_2$ for every $q\geq 2$).
Then $\mu(\ell_q)=1$ and the function 
\[P_q(x):=\eqsys{\sum_{i=1}^{+\infty}\abs{(x,v_i)_{\ell_2}}^q & q\in(0,1);\\
\norm{x}_q & q\geq 1,}\]
belongs to $\elle^p(\ell_2,\mu)$, for every $p\geq 1$. 
\end{pro}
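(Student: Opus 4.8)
The plan is to treat the two regimes $q\geq 1$ and $0<q<1$ separately, since in the first case $P_q=\norm{\cdot}_q$ is genuinely a $1$-seminorm (subadditive and positively homogeneous of degree $1$) on its natural domain, while in the second case $P_q$ is only a $q$-seminorm in the sense of the definition preceding Proposition \ref{Fernique with p-norm}. First I would verify that $\ell_q$, viewed as $\{x\in\ell_2\tc \sum_i\abs{(x,v_i)_{\ell_2}}^q<+\infty\}$, is a Borel-measurable linear subspace of $\ell_2$: it is the set where the increasing sequence of continuous functions $x\mapsto\sum_{i=1}^n\abs{(x,v_i)_{\ell_2}}^q$ has a finite supremum, hence Borel, and linearity is immediate from the triangle/$q$-triangle inequality in $\R$. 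On $\ell_q$ the function $P_q$ is a $1$-seminorm when $q\geq 1$ (Minkowski) and a $q$-seminorm when $0<q<1$ (using $(a+b)^q\leq a^q+b^q$ for the subadditivity and $\abs{\lambda}^q$-homogeneity for the scaling).

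The next step is to show $\mu(\ell_q)=1$; it suffices to do this for $q\in(0,1)$, since $\ell_q=\ell_2$ for $q\geq 2$ and $\ell_q\supseteq\ell_1\supseteq\ell_q$ considerations handle $1\le q<2$ (more directly: $\ell_q\supseteq\ell_2$ fails, so for $1\le q<2$ one argues as in the $q<1$ case). Concretely, writing $x_i:=(x,v_i)_{\ell_2}$, under $\mu$ the coordinates $x_i$ are independent centered Gaussians with variances $\lambda_i=2^{-i}$, so $\mathbb{E}\abs{x_i}^q = c_q\lambda_i^{q/2}=c_q 2^{-iq/2}$ with $c_q=\mathbb{E}\abs{Z}^q<\infty$ for $Z\sim N(0,1)$. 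Hence by Tonelli $\mathbb{E}\sum_i\abs{x_i}^q = c_q\sum_i 2^{-iq/2}<+\infty$ for every $q>0$, so $\sum_i\abs{x_i}^q<\infty$ $\mu$-a.e., i.e. $\mu(\ell_q)=1$. (This simultaneously shows $P_q\in\elle^1(\ell_2,\mu)$.) Thus $P_q$ is a $\borel(\ell_2)$-measurable $p$-seminorm in the required sense, with $p=\min\{q,1\}\in(0,1]$ and $X_0=\ell_q$ of full $\mu$-measure.

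Having set this up, the $\elle^p$-integrability for all $p\geq 1$ follows from the Fernique-type estimates already proved. For $q\geq 1$, $P_q$ is a measurable $1$-seminorm, so Proposition \ref{Fernique without square} gives $\int_{\ell_2}e^{\alpha P_q}d\mu<+\infty$ for every $\alpha>0$; since $\xi^p\leq C_{p,\alpha}e^{\alpha\xi}$ for $\xi\geq 0$, we get $P_q\in\elle^p(\ell_2,\mu)$ for every $p\geq 1$. For $0<q<1$, $P_q$ is a measurable $q$-seminorm, so Proposition \ref{Fernique with p-norm} gives $\int_{\ell_2}e^{\alpha P_q^2}d\mu<+\infty$ for some $\alpha>0$; a fortiori $\int_{\ell_2}e^{\beta P_q}d\mu<+\infty$ for every $\beta>0$ (split the domain into $\{P_q\le 1\}$ and $\{P_q>1\}$, on the latter $\beta P_q\le \alpha P_q^2$ once $\beta\le\alpha$, and scale $\alpha$ down freely), whence again $P_q\in\elle^p(\ell_2,\mu)$ for all $p\geq 1$. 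The only mildly delicate point — the main obstacle — is the bookkeeping in identifying $P_q$ with an honest $p$-seminorm on a full-measure Borel linear subspace, i.e. checking that the abstract hypotheses of Propositions \ref{Fernique with p-norm} and \ref{Fernique without square} are met; the integrability estimates themselves are then immediate consequences.
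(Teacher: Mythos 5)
Your proposal is correct and follows essentially the same route as the paper: the same first-moment computation $\int_{\ell_2}\sum_i\abs{(x,v_i)_{\ell_2}}^qd\mu=c_q\sum_i 2^{-iq/2}<+\infty$ to get $\mu(\ell_q)=1$, followed by recognizing $P_q$ as a Borel-measurable $p$-seminorm (with $p=\min\{q,1\}$) on a full-measure linear subspace and invoking the Fernique-type estimates. The only cosmetic difference is that you route the case $q\geq 1$ through Proposition \ref{Fernique without square} while the paper applies Proposition \ref{Fernique with p-norm} uniformly; both yield the required exponential moments and hence membership in $\elle^p(\ell_2,\mu)$ for all $p\geq 1$.
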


\begin{proof}
By \cite[Exercise A.3.34]{Bog98}, for every $q>0$ the linear space $\ell_q$ is measurable with respect to the Borel $\sigma$-algebra in $\ell_2$.
We have
\begin{gather*}
\int_{\ell_2}\sum_{i=1}^{n}\abs{(x,v_i)_{\ell_2}}^qd\mu(x)=\sum_{i=1}^{n}\int_{\ell_2}\abs{(x,v_i)_{\ell_2}}^qd\mu(x).
\end{gather*}
Using the change of variable formula (see \cite[Equation (A.3.1)]{Bog98}) we get
\begin{gather*}
\int_{\ell_2}\sum_{i=1}^{n}\abs{(x,v_i)_{\ell_2}}^qd\mu(x)=\sum_{i=1}^{n}\frac{2\sqrt{2^{i-1}}}{\sqrt{\pi}}\int_0^{+\infty}\eta^qe^{-2^{i-1}\eta^2}d\eta.
\end{gather*}
Let $c_q=2\sqrt{\pi^{-1}}\int_0^{+\infty}t^qe^{-t^2}dt$. Then
\begin{gather*}
\int_{\ell_2}\sum_{i=1}^{n}\abs{(x,v_i)_{\ell_2}}^qd\mu(x)=c_q\sum_{i=1}^{n}\pa{\frac{1}{2^{\frac{q}{2}}}}^i= c_q\frac{2^{\frac{q}{2}(n+1)}-1}{2^{\frac{q}{2}n}(2^{\frac{q}{2}}-1)}\leq c_q\frac{2^{\frac{q}{2}}}{2^{\frac{q}{2}}-1}.
\end{gather*}
Then by the monotone convergence theorem we get
\[\int_{\ell_2}\sum_{i=1}^{+\infty}\abs{(x,v_i)_{\ell_2}}^qd\mu(x)<+\infty.\] 
By \cite[Theorem 2.5.5]{Bog98} we have $\mu(\ell_q)=1$.
Observe that $P_q$ is a $\borel(X)$-measurable $q$-norm, for every $q> 0$, then by Proposition \ref{Fernique with p-norm} it belongs to $\elle^p(\ell_2,\mu)$ for every $p\geq 1$.
\end{proof}

Proposition \ref{norma integrabile} implies that $w_q=e^{\norm{x}_q}$ is defined $\mu$-a.e. on $\ell_2$ for every $q>1$. Now we need to prove that $w_q$ satisfies Hypothesis \ref{ipotesi1}. We start with another modification of Fernique's theorem (see \cite[Theorem 2.8.5]{Bog98}), which implies that for every $q>1$, $w_q\in\elle^s(\ell_2,\mu)$ for every $s\geq 1$.

\begin{pro}\label{norma e' sobolev}
For every $q> 1$ consider the function $U_q:\ell_2\ra\R$ defined by
\[U(x)=\eqsys{\norm{x}_q & x\in\ell_q;\\ 0 & x\notin\ell_q.}\]
Then $U\in W^{1,p}(\ell_2,\mu)$ for every $p\geq 1$ and
\[\partial_i U(x)=2^{-\frac{i}{2}}\sign(x,v_i)(x,v_i)^{q-1}\norm{x}_q^{1-q}\qquad\text{ $\mu$-a.e.}\]
\end{pro}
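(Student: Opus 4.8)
The plan is to realise $U$ as a limit, in $W^{1,p}(\ell_2,\mu)$, of the cylindrical truncations
\[U_n(x):=\pa{\sum_{i=1}^{n}\abs{(x,v_i)_{\ell_2}}^q}^{\frac1q},\qquad n\in\N.\]
For each fixed $n$ the function $U_n$ equals the $\ell^q$-norm of $\pa{(x,v_1)_{\ell_2},\dots,(x,v_n)_{\ell_2}}\in\R^n$, hence it is $1$-Lipschitz on $\ell_2$ when $q\geq 2$ and $n^{\frac1q-\frac12}$-Lipschitz when $1<q<2$; in either case $U_n\in W^{1,p}(\ell_2,\mu)$ for every $p\geq1$ by \cite[Theorem 5.11.2]{Bog98}. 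Writing $U_n=g_n(\ell_1,\dots,\ell_n)$ with $\ell_i(x)=(x,v_i)_{\ell_2}\in\ell_2^*$ and $g_n(\xi)=\pa{\sum_{i\leq n}\abs{\xi_i}^q}^{1/q}$, one uses $q>1$ to see that $g_n\in\con^1(\R^n\ssm\set{0})$ and that $\set{0}$ is negligible for the non-degenerate Gaussian law of $(\ell_1,\dots,\ell_n)$ under $\mu$, so the Sobolev gradient of $U_n$ agrees $\mu$-a.e.\ with the classical one. Since $\nabla_H\ell_i=R_\mu v_i=Qv_i=2^{-i}v_i=2^{-i/2}e_i$, the chain rule then gives, for $\mu$-a.e.\ $x$, $\partial_iU_n(x)=0$ for $i>n$ and
\[\partial_iU_n(x)=2^{-\frac{i}{2}}\sign(x,v_i)\abs{(x,v_i)}^{q-1}\pa{\sum_{k=1}^{n}\abs{(x,v_k)}^q}^{\frac{1-q}{q}}\qquad (i\leq n).\]

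Next I would pass to the limit. On $\ell_q$, which has full $\mu$-measure by Proposition \ref{norma integrabile}, $U_n\uparrow\norm{x}_q=U(x)$ with $0\leq U_n\leq\norm{x}_q=P_q\in\elle^p(\ell_2,\mu)$, so dominated convergence gives $U_n\to U$ in $\elle^p(\ell_2,\mu)$ for every $p\geq1$. For the gradients, for $\mu$-a.e.\ $x$ put $a_i:=\abs{(x,v_i)}^q\pa{\sum_{k\leq n}\abs{(x,v_k)}^q}^{-1}\in[0,1]$, so that $\sum_{i\leq n}a_i=1$ and, as $q>1$ makes $2(q-1)/q>0$ and hence $a_i^{2(q-1)/q}\leq1$,
\[\abs{\nabla_HU_n(x)}_H^2=\sum_{i=1}^{n}2^{-i}a_i^{\frac{2(q-1)}{q}}\leq\sum_{i=1}^{n}2^{-i}\leq 1\qquad\mu\text{-a.e.},\]
a bound \emph{uniform in $n$}; the same estimate holds for the candidate limit $V(x):=\sum_{i=1}^{+\infty}2^{-i/2}\sign(x,v_i)\abs{(x,v_i)}^{q-1}\norm{x}_q^{1-q}\,e_i$. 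Since $\partial_iU_n(x)\to\gen{V(x),e_i}_H$ for each $i$ and $\mu$-a.e.\ $x$, while $\sum_{i>N}\abs{\partial_iU_n(x)}^2\leq\sum_{i>N}2^{-i}$ and $\sum_{i>N}\abs{\gen{V(x),e_i}_H}^2\leq\sum_{i>N}2^{-i}$ are small uniformly in $n$, splitting the series for $\abs{\nabla_HU_n(x)-V(x)}_H^2$ at level $N$ shows $\abs{\nabla_HU_n(x)-V(x)}_H\to0$ $\mu$-a.e.; a further dominated convergence (dominating by the constant $2^p$) then yields $\nabla_HU_n\to V$ in $\elle^p(\ell_2,\mu;H)$.

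Finally, since the gradient operator $\nabla_H$ is closed on $W^{1,p}(\ell_2,\mu)$ and we have exhibited $U_n\in W^{1,p}(\ell_2,\mu)$ with $U_n\to U$ in $\elle^p(\ell_2,\mu)$ and $\nabla_HU_n\to V$ in $\elle^p(\ell_2,\mu;H)$, we conclude that $U\in W^{1,p}(\ell_2,\mu)$ with $\nabla_HU=V$, i.e.\ the stated formula holds, for every $p\geq1$. The only genuinely delicate point I anticipate is the uniform-in-$n$ control of $\nabla_HU_n$: for $1<q<2$ the maps $U_n$ are not $1$-Lipschitz with respect to $\abs{\cdot}_H$, so the bound cannot simply be read off a Lipschitz constant, but the explicit formula for $\partial_iU_n$ together with the elementary inequality $a_i^{2(q-1)/q}\leq1$ still delivers $\abs{\nabla_HU_n}_H\leq1$, which is precisely what makes the passage to the limit in $\elle^p(\ell_2,\mu;H)$ go through.
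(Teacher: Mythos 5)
Your proof is correct, and it follows the same skeleton as the paper's (finite-dimensional cylindrical approximation, a dominated-convergence argument for the gradients, and closedness of $\nabla_H$ to conclude), but the two technical pivots are genuinely different. First, the paper smooths the approximants, taking $\varphi_n(\eta)=\pa{\sum_{i\le n}(\eta_i^2+2^{-n})^{q/2}}^{1/q}$ so that $U_n\in\fcon^\infty(\ell_2)$ and membership in $W^{1,p}(\ell_2,\mu)$ is immediate from the definition of the Sobolev space as a completion; you instead keep the raw truncations $\pa{\sum_{i\le n}\abs{(x,v_i)}^q}^{1/q}$, which forces you to invoke the Lipschitz-implies-Sobolev theorem \cite[Theorem 5.11.2]{Bog98} together with an a.e.\ chain rule to identify the Sobolev gradient with the classical one off the Gaussian-null set $\set{\ell_1=\dots=\ell_n=0}$ --- a slightly heavier hypothesis to check, but it spares you the bookkeeping of the $2^{-n}$ regularization. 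Second, and more substantially, your domination of the gradients is sharper and case-free: writing $\abs{\partial_iU_n}^2=2^{-i}a_i^{2(q-1)/q}$ with $a_i\in[0,1]$, $\sum_ia_i=1$, gives the uniform bound $\abs{\nabla_HU_n}_H\le 1$, whereas the paper splits into the cases $1<q\le 2$ and $q>2$ and dominates $\abs{\nabla_HU_n}_H$ by quantities of the form $\sum_i\abs{(x,v_i)}^{q-1}$ (plus $\sum_i(x,v_i)^2$), whose $p$-integrability must then be pulled from Proposition \ref{norma integrabile} with the exponent $q-1$ (possibly $<1$). Your constant bound makes the $\elle^p$ passage to the limit trivial and is the cleaner route through the one genuinely delicate point you correctly identified.
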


\begin{proof}
For every $n\in\N$ consider the function $\varphi_n:\R^n\ra\R$ defined as
\[\varphi_n(\eta_1,\ldots,\eta_n):=\pa{\sum_{i=1}^n\pa{\eta_i^2+\frac{1}{2^n}}^{\frac{q}{2}}}^{\frac{1}{q}}.\]
Let $U_n(x):=\varphi_n((x,v_1),\ldots,(x,v_n))$ and observe that $U_n\ra U$ pointwise $\mu$-a.e. and $U_n\in\fcon_b^\infty(\ell_2)$ for every $n\in\N$. Indeed for every $n\geq 2$
\begin{gather*}
\pa{\sum_{i=1}^n\abs{(x,v_i)}^q}^{\frac{1}{q}}\leq \abs{U_n(x)}\leq\pa{\max\set{1,2^{q-1}}\sum_{i=1}^n\pa{\abs{(x,v_i)}^q+\frac{1}{2^{\frac{nq}{2}}}}}^{\frac{1}{q}}=\\
=\max\set{1,2^{\frac{q-1}{q}}}\pa{\frac{n}{2^{\frac{nq}{2}}}+\sum_{i=1}^n\abs{(x,v_i)}^q}^{\frac{1}{q}}\leq\max\set{1,2^{\frac{q-1}{q}}}\pa{\frac{n}{2^{\frac{n}{2}}}+\pa{\sum_{i=1}^n\abs{(x,v_i)}^q}^{\frac{1}{q}}}\\
\leq \max\set{1,2^{\frac{q-1}{q}}}\pa{1+\norm{x}_q}.
\end{gather*}
By Lebesgue's dominated convergence theorem and Proposition \ref{norma integrabile} we get $U_n\ra U$ in $\elle^p(\ell_2,\mu)$ for every $p\geq 1$. Observe that
\[\partial_i U_n(x)=\frac{(x,v_i)}{2^{\frac{i}{2}}}\pa{(x,v_i)^2+\frac{1}{2^n}}^{\frac{q}{2}-1}\pa{\sum_{k=1}^n\pa{(x,v_k)^2+\frac{1}{2^n}}^{\frac{q}{2}}}^{\frac{1}{q}-1},\]
and $\partial_i U_n(x)\ra 2^{-\frac{i}{2}}\sign(x,v_i)(x,v_i)^{q-1}\norm{x}_q^{1-q}$ pointwise $\mu$-a.e. 
For $1< q\leq 2$
\begin{gather*}
\abs{\nabla_H U_n(x)}_H=\pa{\sum_{i=1}^{+\infty}\frac{(x,v_i)^2}{2^i}\pa{(x,v_i)^2+\frac{1}{2^n}}^{q-2}\pa{\sum_{k=1}^n\pa{(x,v_k)^2+\frac{1}{2^n}}^{\frac{q}{2}}}^{\frac{2}{q}-2}}^{\frac{1}{2}}\leq \\
\leq 2^{\frac{q-1}{2}}\pa{\sum_{i=1}^{+\infty}\frac{(x,v_i)^2}{2^i}\pa{(x,v_i)^2+\frac{1}{2^n}}^{q-2}}^{\frac{1}{2}}\leq 2^{\frac{q-1}{2}}\pa{\sum_{i=1}^{+\infty}\frac{(x,v_i)^{2q-2}}{2^i}}^{\frac{1}{2}}\leq
2^{\frac{q-1}{2}}\pa{\sum_{i=1}^{+\infty}(x,v_i)^{q-1}}.
\end{gather*}
By Proposition \ref{norma integrabile} the last function is integrable for every $p\geq 1$. 
If $p>2$ then
\begin{gather*}
\abs{\nabla_H U_n(x)}_H=\pa{\sum_{i=1}^{+\infty}\frac{(x,v_i)^2}{2^i}\pa{(x,v_i)^2+\frac{1}{2^n}}^{q-2}\pa{\sum_{k=1}^n\pa{(x,v_k)^2+\frac{1}{2^n}}^{\frac{q}{2}}}^{\frac{2}{q}-2}}^{\frac{1}{2}}\leq \\
\leq 2^{\frac{q-1}{2}}\pa{\sum_{i=1}^{+\infty}\frac{(x,v_i)^2}{2^i}\pa{(x,v_i)^2+\frac{1}{2^n}}^{q-2}}^{\frac{1}{2}}\leq\\
\leq 2^{\frac{q-1}{2}}\max\set{1,2^{\frac{q-3}{2}}}\pa{\sum_{i=1}^{+\infty}\pa{(x,v_i)^{2q-2}+\frac{(x,v_i)^{4}}{2^{n(q-2)}}}}^{\frac{1}{2}}\leq\\
\leq 2^{\frac{q-1}{2}}\max\set{1,2^{\frac{q-3}{2}}}\pa{\sum_{i=1}^{+\infty}(x,v_i)^{q-1}+\sum_{i=1}^{+\infty}(x,v_i)^{2}}.
\end{gather*}
By Proposition \ref{norma integrabile} the last term belongs to $\elle^p(\ell_2,\mu)$ for every $p\geq 1$. By the Lebesgue dominated convergence theorem we get $U\in W^{1,p}(\ell_2,\mu)$ for every $p\geq 1$ and $\partial_i U(x)=2^{-\frac{i}{2}}\sign(x,v_i)(x,v_i)^{q-1}\norm{x}_q^{1-q}$ $\mu$-a.e. for every $i\in\N$.
\end{proof}

By Proposition \ref{C1 func} (applied with the functions $\theta_n(\eta)=n\arctan(n^{-1}e^\eta)$, for every $\eta\in\R$ and $n\in\N$, and then using the Lebesgue dominated convergence theorem) we get
\begin{gather*}
\partial_i w_q(x)=2^{-\frac{i}{2}}\sign(x,v_i)(x,v_i)^{q-1}\norm{x}_q^{1-q}e^{\norm{x}_q};\\
\partial_i \log w_q(x)=2^{-\frac{i}{2}}\sign(x,v_i)(x,v_i)^{q-1}\norm{x}_q^{1-q}.
\end{gather*}
Proposition \ref{Fernique without square} and Proposition \ref{norma e' sobolev} yield that the function $w_q$ satisfies Hypothesis \ref{ipotesi1} for every $s,t>1$. This implies that it is possible to define $W^{1,p}(\ell_2,\nu_q)$ for every $p>1$ (see Definition \ref{Weightes Sobolev space definition}). Observe that $\norm{\cdot}_q$ is not $\mu$-a.e. continuous on $\ell_2$ if $q\in(1,2)$.

Using the same argument already used in Example \ref{esempio spazio hilbert} it is possible to prove that $S$ satisfies Hypothesis \ref{ipotesi2}. The trace operator $\trace_{S^{-1}(0)}$ maps $W^{1,p}(S^{-1}(-\infty,0),\nu_q)$ into $\elle^r(S^{-1}(0),w_q\rho)$ for every $r\in[1,p)$. We do not know whether its range is contained in $\elle^p(S^{-1}(0),w_q\rho)$. The same considerations are true for the trace operator $\trace_{G_f^{-1}(0)}$ for every $f\in\ell_2\ssm\set{0}$.

\subsection{An example in $\con[0,1]$}\label{esempio C0}

Recall that a function $f:X\ra\R$ from a Banach space $X$ to $\R$ is G\^ateaux differentiable in $x\in X$ if for every $y\in X$ the limit
\[\lim_{t\ra 0}\frac{f(x+ty)-f(x)}{t}\]
exists and defines a linear (in $y$) map $((Df)x)(\cdot)$ which is continuous from $X$ to $\R$. 

We will use the following result of Aronszajn (see \cite[Theorem 1 of Chapter 2]{Aro76} and \cite[Theorem 6]{Phe78}).

\begin{thm}\label{Aronszajn}
Suppose that $X$ is a separable real Banach space. If $f:X\ra\R$ is a continuous convex function, then $f$ is G\^ateaux differentiable outside a Gaussian null set, i.e. a Borel set $A\subseteq X$ such that $\mu(A)=0$ for every nondegenerate Gaussian measure $\mu$ on $X$.
\end{thm}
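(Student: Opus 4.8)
The plan is to prove Aronszajn's theorem by reducing the failure of G\^ateaux differentiability of $f$ to the failure of directional differentiability along a countable dense family of directions, and then to control the latter through the one-dimensional structure of the Gaussian measure. The first ingredient is the standard fact that a continuous convex function on a Banach space is locally Lipschitz, so for each $x\in X$ the one-sided derivative
\[p_x(v):=\lim_{t\to 0^+}\frac{f(x+tv)-f(x)}{t}=\inf_{t>0}\frac{f(x+tv)-f(x)}{t}\]
exists for every $v\in X$, is positively homogeneous, subadditive, and $L(x)$-Lipschitz in $v$ (where $L(x)$ is a local Lipschitz constant of $f$ near $x$), and satisfies $p_x(v)+p_x(-v)\ge 0$ with equality exactly when the two-sided derivative $\partial_v f(x)$ exists. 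The elementary lemma I would isolate is: if $p_x(v)+p_x(-v)=0$ for all $v$ in a dense subset of $X$, then by Lipschitz continuity of $v\mapsto p_x(v)+p_x(-v)$ the same holds for all $v\in X$, so $p_x$ is additive, hence linear, and bounded, hence lies in $X^*$; therefore $f$ is G\^ateaux differentiable at $x$ with $(Df)x=p_x$. I would use this lemma twice, with two different dense sequences.

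First, fixing any countable dense set $\{v_n\}\subseteq X$ and putting $N_v:=\set{x\in X\tc p_x(v)+p_x(-v)>0}$, each $N_v$ is Borel (since $x\mapsto p_x(v)$ is an infimum over rational $t>0$ of continuous functions), and the lemma shows that the non-differentiability set $N$ of $f$ equals $\bigcup_n N_{v_n}$; in particular $N$ is Borel. It then suffices to show $\mu(N)=0$ for every nondegenerate centered Gaussian measure $\mu$ on $X$, the non-centered case following by applying this to $f(\cdot+a)$ after a translation.

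Second, fixing such a $\mu$ with Cameron--Martin space $H_\mu$ --- which is dense in $X$ precisely because $\mu$ is nondegenerate --- I would choose a countable $\{h_m\}\subseteq H_\mu$ dense in $X$, so that $N\subseteq\bigcup_m N_{h_m}$ by the lemma, reducing matters to $\mu(N_{h_m})=0$ for each $m$. For a fixed $h=h_m\neq 0$, on each line $x_0+\R h$ the function $s\mapsto f(x_0+sh)$ is convex on $\R$, hence differentiable off a countable set, so every slice $\set{s\in\R\tc x_0+sh\in N_h}$ is countable. The crux --- and the step I expect to be the main obstacle --- is that a Borel set with Lebesgue-null (here, countable) slices in a direction $h$ need not be $\mu$-null in general, but it is when $h\in H_\mu$: using the decomposition $x=\pi(x)+\abs{h}_H^{-2}\hat h(x)\,h$ with $\pi(x)\in\ker\hat h$, the random variables $\pi(x)$ and $\hat h(x)$ are uncorrelated (because $R_\mu\hat h=h$) and hence independent, so the image of $\mu$ under $x\mapsto(\pi(x),\hat h(x))$ is a product of a Gaussian on $\ker\hat h$ with the nondegenerate, hence non-atomic, Gaussian law of $\hat h$ on $\R$ (see \cite{Bog98}). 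A Fubini argument against this product, using that the non-atomic one-dimensional factor annihilates each countable slice of $N_h$, gives $\mu(N_h)=0$, and summing over $m$ yields $\mu(N)=0$, which completes the proof.
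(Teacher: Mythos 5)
The paper offers no proof of this theorem: it is quoted verbatim from the literature, with the reader sent to Aronszajn (Theorem 1 of Chapter 2 of \cite{Aro76}) and Phelps (Theorem 6 of \cite{Phe78}). So your argument is not competing with anything in the text; judged on its own it is correct, and it is essentially the classical proof underlying those references. Your two pillars are the right ones. The convexity lemma — $p_x$ sublinear and locally Lipschitz in $v$, so that vanishing of the nonnegative Lipschitz function $v\mapsto p_x(v)+p_x(-v)$ on a dense set forces $p_x$ to be additive, odd and positively homogeneous, hence a bounded linear functional — is exactly what reduces non-differentiability to the countable union $\bigcup_m N_{h_m}$ and gives Borel measurability (each $x\mapsto p_x(v)$ is an infimum over rational $t>0$ of continuous functions by monotonicity of convex difference quotients, hence upper semicontinuous). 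The measure-theoretic pillar is also the standard one: only along Cameron--Martin directions does ``countable slices'' imply $\mu$-nullity, and your decomposition $x=\pi(x)+\abs{h}_H^{-2}\hat{h}(x)h$ with $\pi$ independent of $\hat{h}$ is equivalent to invoking the conditional measures of $\mu$ on the lines $x+\R h$, which for $h\in H_\mu$ are nondegenerate one-dimensional Gaussians. Three points deserve an explicit line in a written-out version, though none is a gap: (i) for general $h\in H_\mu$ the functional $\hat{h}$ is only a $\mu$-measurable linear functional, so the decomposition and the Fubini computation hold $\mu$-a.e., which suffices; (ii) independence of $\pi$ and $\hat{h}$ follows from uncorrelatedness because the pair $(\pi,\hat{h})$ is a measurable linear image of $x$, hence jointly Gaussian; (iii) a countable $\abs{\cdot}_H$-dense subset of $H_\mu$ is automatically $\norm{\cdot}_X$-dense in $X$, since the embedding of $H_\mu$ into $X$ is continuous and $H_\mu$ is $X$-dense precisely by nondegeneracy, as you note.
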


Consider the classical Wiener measure $P^W$ on $\con[0,1]$ (see \cite[Example 2.3.11 and Remark 2.3.13]{Bog98} for its construction). Recall that the Cameron--Martin space is the space of the continuous functions $f$ on $[0,1]$ such that $f$ is absolutely continuous, $f'\in\elle^2[0,1]$ and $f(0)=0$. In addition $\abs{f}_H=\norm{f'}_{\elle^2[0,1]}$ (see \cite[Lemma 2.3.14]{Bog98}). An orthonormal basis of $H$ is given by the functions
\[f_n(\xi)=\sqrt{2\lambda_n}\sin\frac{\xi}{\sqrt{\lambda_n}}\qquad\text{where }\lambda_n=\frac{4}{\pi^2(2 n-1)^2}\text{ for every }n\in\N.\]

Consider the weight 
\[w(f)=e^{\norm{f}_\infty}\qquad\text{for every }f\in\con[0,1].\] 
According to \cite[Theorem 5.11.2]{Bog98} $w$ is differentiable along $H$. In particular letting 
\[M=\set{f\in\con[0,1]\tc \text{ there exists an unique $\xi\in[0,1]$ such that }\norm{f}_\infty=\abs{f(\xi)}},\]
by Theorem \ref{Aronszajn} and \cite[Example 1.6.b]{DGZ93} we get that $P^W\pa{M}=1$. Furthermore
by \cite{Ban87} 
\begin{gather*}
((D\norm{\cdot}_\infty)f)(g)=\sign(f(\xi_f))g(\xi_f),
\end{gather*}
for every $f\in M$ and $g,g_1,g_2\in\con[0,1]$, where $\xi_f\in[0,1]$ is the only point of maximum of the function $\abs{f(\cdot)}$. By \cite[Definition 5.2.3 and Proposition 5.4.6(iii)]{Bog98} and by Proposition \ref{C1 func} (applied with the functions $\theta_n(\eta)=n\arctan(n^{-1}e^\eta)$, for every $\eta\in\R$ and $n\in\N$, and then using the Lebesgue dominated convergence theorem) it can be seen that
\begin{gather*}
(\partial_i w(f))(\xi)=e^{\norm{f}_\infty}\sign(f(\xi_f))f_i(\xi_f);\\
(\partial_i \log w(f))(\xi)=\sign(f(\xi_f))f_i(\xi_f).
\end{gather*}
We also have
\begin{gather}
\label{primo integrale}
\norm{w}_{\elle^s(\con[0,1],P^W)}=\pa{\int_{\con[0,1]}e^{s\norm{f}_\infty}dP^W(f)}^{\frac{1}{s}};\\
\label{secondo integrale}
\norm{\nabla_H w}_{\elle^s(\con[0,1],P^W; H)}=\pa{\int_{\con[0,1]}e^{s\norm{f}_\infty}\pa{\sum_{n=1}^{+\infty}f_n^2(\xi_f)}^sdP^W(f)}^{\frac{1}{s}};\\
\label{terzo integrale}
\norm{\log w}_{\elle^t(\con[0,1],P^W)}=\pa{\int_{\con[0,1]}\norm{f}^t_\infty dP^W(f)}^{\frac{1}{t}};\\
\label{quarto integrale}
\norm{\nabla_H \log w}_{\elle^t(\con[0,1],P^W;H)}=\pa{\int_{\con[0,1]}\pa{\sum_{n=1}^{+\infty}f^2_n(\xi_f)}^t dP^W(f)}^{\frac{1}{t}}.
\end{gather}
By Proposition \ref{Fernique without square}, (\ref{primo integrale}) and (\ref{terzo integrale}) are finite for every $s,t>1$.
By \cite[Theorem 5.11.2]{Bog98}, (\ref{secondo integrale}) and (\ref{quarto integrale}) are finite for every $s>1$. 
All these results give that the weight $w$ satisfies Hypothesis \ref{ipotesi1} for every $s,t>1$. This implies that it is possible to define $W^{1,p}(\con[0,1],\nu)$ for every $p>1$ (see Definition \ref{Weightes Sobolev space definition}).

If we let 
\[S_1(f)=\norm{f}_2-1,\] 
then $S_1\in\bigcap_{p>1} W^{2,p}(\con[0,1],P^W)$.
Furthermore $\nabla_H S_1(f)=\sum_{i=1}^{+\infty}\frac{\int_0^1f(\xi)f_i(\xi)d\xi}{\norm{f}_2}f_i$ and
\begin{gather}\label{altro integrale}
\int_{\con[0,1]}\frac{1}{\abs{\nabla_H S_1(f)}_H^p}dP^W(f)
=\int_{\con[0,1]}\frac{\norm{f}_2^p}{(\sum_{j=1}^{+\infty}(\int_0^1f(\xi)f_n(\xi)d\xi)^2)^{\frac{p}{2}}}dP^W(f).
\end{gather}
Using the H\"older inequality with some $\alpha>1$ we get
\begin{gather*}
\int_{\con[0,1]}\frac{1}{\abs{\nabla_H S_1(f)}_H^p}dP^W(f)\leq \pa{\int_{\con[0,1]}\norm{f}_2^{p\alpha'}dP^W(f)}^{\frac{1}{\alpha'}}\pa{\int_{\con[0,1]}\frac{dP^W(f)}{(\sum_{j=1}^{+\infty}(\int_0^1f(\xi)f_n(\xi)d\xi)^2)^{\frac{p\alpha}{2}}}}^{\frac{1}{\alpha}}.
\end{gather*}
Finally $\int_{\con[0,1]}\norm{f}_2^{p\alpha'}dP^W(f)$ is finite by Fernique's theorem (see \cite[Theorem 2.8.5]{Bog98}), and recalling that $\elle^2$-continuous linear functional are also $\con[0,1]$-continuous we can use the change of variable formula (see \cite[Equation (A.3.1)]{Bog98}) and obtain
\begin{gather*}
\int_{\con[0,1]}\frac{dP^W(f)}{(\sum_{j=1}^{+\infty}(\int_0^1f(\xi)f_n(\xi)d\xi)^2)^{\frac{p\alpha}{2}}}\leq \int_{\con[0,1]}\frac{dP^W(f)}{(\sum_{j=1}^{N}(\int_0^1f(\xi)f_n(\xi)d\xi)^2)^{\frac{p\alpha}{2}}}=\\
=\frac{1}{\Gamma\pa{\frac{p\alpha}{2}}}\int_0^{+\infty}\xi^{\frac{p\alpha}{2}-1}\prod_{i=1}^N\frac{d\xi}{\sqrt{1+2\xi\lambda_i^2}}.
\end{gather*}
For $N\in\N$ big enough the last integral is finite and we get that (\ref{altro integrale}) is finite for every $p>1$. Thus $S_1$ satisfies Hypothesis \ref{ipotesi2}. 
By Corollary \ref{Trace continuity}, the trace operator $\trace_{S^{-1}(0)}$ maps the space $W^{1,p}(S_1^{-1}(-\infty,0),\nu)$ into $\elle^q(S_1^{-1}(0),w\rho)$ continuously, for every $q\in [1,p)$.

Fix a finite signed Borel measure $\lambda$ on $[0,1]$ and consider the continuous linear functional 
\[G_\lambda(f)=\int_0^1f(x)d\lambda(x)\qquad\text{for every }f\in\con[0,1].\] 
Using a similar argument as in Proposition \ref{proprerties of sobolev space}, we get $\trace_{G_\lambda^{-1}(0)}\varphi\in\elle^q(G_\lambda^{-1}(0),\rho)$ for every $q\in[1,p)$ and every $\varphi\in W^{1,p}(G^{-1}_\lambda(-\infty,0),\nu)$.

\bibliographystyle{plain}
\nocite{*} 
\bibliography{bibpesi}

\end{document}